\title{Spectra based on\\ Bohl exponents and Bohl dichotomy\\ for nonautonomous difference equations}
\author[1]{Adam Czornik}
\author[2]{Konrad Kitzing}
\author[3]{Stefan Siegmund}
\affil[1]{Faculty of Automatic Control, Electronics and Computer Science, Silesian University of Technology, Gliwice, Poland,
    \texttt{adam.czornik@pols.pl}}
\affil[2]{Institute of Analysis, Faculty of Mathematics, TU Dresden, Germany,
    \texttt{konrad.kitzing@tu-dresden.de}}
\affil[3]{Institute of Analysis, Faculty of Mathematics, TU Dresden, Germany,
    \texttt{stefan.siegmund@tu-dresden.de}}
\date{\today}
\providecommand{\definitionname}{Definition}
\providecommand{\theoremname}{Theorem}
\providecommand{\propositionname}{Proposition}
\providecommand{\lemmaname}{Lemma}
\providecommand{\corollaryname}{Corollary}
\providecommand{\remarkname}{Remark}
\providecommand{\examplename}{Example}
\providecommand{\hypothesisname}{Hypothesis}
\theoremstyle{definition}
\newtheorem{definition}{\protect\definitionname}
\theoremstyle{plain}
\newtheorem{theorem}[definition]{\protect\theoremname}
\newtheorem{lemma}[definition]{\protect\lemmaname}
\theoremstyle{remark}
\newtheorem{remark}[definition]{\protect\remarkname}
\theoremstyle{definition}
\theoremstyle{definition}
\numberwithin{equation}{section}
\setlist[enumerate]{label*=(\alph*),ref=(\alph*)}
\newcommand{\N}{\mathbb{N}}
\newcommand{\R}{\mathbb{R}}
\renewcommand{\phi}{\varphi}
\renewcommand{\rho}{\varrho}
\begin{document}

\maketitle

\begin{abstract}
For nonautonomous linear difference equations with bounded coefficients on $\mathbb{N}$ which have a bounded inverse, we introduce two different notions of spectra and discuss their relation to the well-known exponential dichotomy spectrum. The first new spectral notion is called Bohl spectrum and is based on an extended notion of the concept of Bohl exponents. The second new spectral notion is called Bohl dichotomy spectrum and is based on a relaxed version of exponential dichotomy called Bohl dichotomy. We prove spectral theorems and show that the Bohl dichotomy spectrum is the closure of the Bohl spectrum and also a subset of the exponential dichotomy spectrum. We discuss the spectra of upper triangular systems and how they relate to the spectra of their diagonal entries. An example illustrates the subtle differences between the different notions of spectra.

\end{abstract}

\section{Introduction}

Consider the system
\begin{equation}
   x(n+1)=A(n)x(n),
   \quad n \in \mathbb{N}
   \label{1}
\end{equation}
with $A(n)$ in the set $\mathrm{GL}_{d}(\mathbb{R)}$ of invertible $d \times d$ matrices for $n \in  \mathbb{N} = \{0,1,\dots\}$.
We denote the transition matrix of system (\ref{1}) by $\Phi _{A}(n,m)$, $n,$
$m\in \mathbb{N}$, i.e.
\begin{equation*}
   \Phi _{A}(n,m)
   =
   \begin{cases}
      A(n-1)\cdots A(m) & \text{ for } n>m,
   \\
      I_{d} & \text{ for } n=m,
   \\
      \Phi _{A}^{-1}(m,n) & \text{ for } n<m,
   \end{cases}
\end{equation*}
where $I_d$ denotes the identity matrix in $\mathbb{R}^{d \times d}$.
Any solution $\left( x(n)\right) _{n\in \mathbb{N}}$ of \eqref{1} satisfies
\begin{equation*}
   x(n)
   =
   \Phi_{A}(n,m) x(m),
   \quad n, m \in \mathbb{N}.
\end{equation*}
For every $x_0\in \mathbb{R}^{d}$ the unique solution of \eqref{1} which satisfies the initial condition $x(0) = x_0$ is denoted by $x(\cdot,x_0)$. In particular,
\begin{equation*}
   x(n,x_0)
   =
   \Phi_A(n,0) x_0,
   \quad n \in \mathbb{N}.
\end{equation*}
Throughout we assume that $A = (A(n))_{n \in \mathbb{N}}$ and $A^{-1} \coloneqq (A(n)^{-1})_{n \in \mathbb{N}}$ are bounded, i.e.
\begin{align*}
    A \in \mathcal{L}^{\mathrm{Lya}}(\mathbb{N},\mathbb{R}^{d\times d})\coloneqq\{&B\in \mathcal{L}^{\infty}(\mathbb{N},\mathbb{R}^{d\times d}) :
    \\
    &\forall n\in\N : B(n)\in \mathrm{GL}_d(\mathbb R) \text{ and } B^{-1} \in \mathcal{L}^{\infty}(\mathbb{N},\mathbb{R}^{d\times d})\}
\end{align*}
is a so-called \emph{Lyapunov sequence}, where
$\mathcal{L}^{\infty }(\mathbb{N},\mathbb{R}^{d\times d})$ denotes the Banach space of bounded sequences $B = (B(k))_{k \in \mathbb{N}}$ in $\mathbb{R}^{d \times d}$
with norm $\| B \|_{\infty} = \sup_{k \in \mathbb{N}} \| B(k) \|$ and an arbitrary matrix norm $\|\cdot\|$ on $\mathbb{R}^{d \times d}$, see also Remark \ref{rem:PhiBounds}.

A well-studied notion of hyperbolicity for system \eqref{1} is \emph{exponential dichotomy} (see e.g.\ \cite{AulbachSiegmund2001, AulbachSiegmund2002, Barreira2018, Poetzsche2010, Russ2017} and the references therein), which for bounded $A$ and $A^{-1}$ can be defined as follows (cp.\ also \cite[p.\ 2]{Barreira2018}).

\begin{definition}[Exponential dichotomy]\label{ED}
System \eqref{1} has an \emph{exponential dichotomy (ED)} if there exist subspaces $L_1, L_2 \subseteq \mathbb{R}^d$ with $\mathbb{R}^{d}=L_{1} \oplus L_{2}$, $\alpha >0$ and $K > 0$ such
that
\begin{align}
   \| x(n, x_0) \|
   &\leq
   K \mathrm e^{-\alpha (n-m)} \| x(m, x_0) \|,
   & \!\! x_0 \in L_1,
   n \geq m,
   \label{Dich1}
\\
   \| x(n, x_0) \|
   &\geq
   K^{-1} \mathrm e^{\alpha (n-m)} \| x(m, x_0) \|,
   & \!\!  x_0 \in L_2, n \geq m.
   \label{Dich2}
\end{align}
\end{definition}

\begin{remark}[Alternative representation of exponential dichotomy] If system \eqref{1} has an \emph{exponential dichotomy (ED)} and $P \in \mathbb{R}^{d \times d}$ is the projection with $\operatorname{im} P = L_1$ and $\operatorname{ker} P = L_2$ then
\begin{align*}
   \| \Phi_A(n,m) P(m) \|
   &\leq
   K \mathrm e^{-\alpha (n-m)},
   \quad n \geq m,
\\
   \| \Phi_A(m, n) (I-P(n)) \|
   &\leq
   K \mathrm e^{-\alpha (n-m)},
   \quad n \geq m,
\end{align*}
where \(P(n) \coloneqq \Phi_A(n,0) P \Phi_A(0,n)\) is the projection onto \(\Phi_A(n,0)[L_1]\) along \(\Phi_A(n,0)[L_2]\) for \(n \in \N\).
\end{remark}

Rearranging and applying the logarithm, \eqref{Dich1} and \eqref{Dich2} are equivalent to
\begin{align*}
   \frac{1}{n-m} \ln \frac{\| x(n, x_0) \|}{\| x(m, x_0) \|}
   &\leq
   \frac{\ln K}{n-m} - \alpha,
   \quad x_0 \in L_1 \setminus\{0\},
   n > m,
\\
   \frac{1}{n-m} \ln \frac{\| x(n, x_0) \|}{\| x(m, x_0) \|}
   &\geq
   \frac{\ln K^{-1}}{n-m} + \alpha,
   \quad x_0 \in L_2 \setminus\{0\}, n > m.
\end{align*}
These estimates motivate to define the \emph{upper Bohl exponent} and the \emph{lower Bohl exponent} on a subspace $L \subseteq \mathbb{R}^{d}$, $L \neq \{0\}$, by
\begin{align} \label{BohlOnL1}
   \overline{\beta}_A(L)
   &\coloneqq
   \inf_{N \in \mathbb{N}}
   \sup_{n - m > N}
   \sup \Big\{\frac{1}{n-m} \ln \frac{\| x(n, x_0) \|}{\| x(m, x_0) \|} : x_0 \in  L \setminus \{0\} \Big\},
\\ \label{BohlOnL2}
   \underline{\beta}_{A}(L)
   &\coloneqq
   \sup_{N \in \mathbb{N}}
   \inf_{n - m > N}
   \inf \Big\{\frac{1}{n-m} \ln \frac{\| x(n, x_0) \|}{\| x(m, x_0) \|} : x_0 \in  L \setminus \{0\} \Big\},
\end{align}
and $\overline{\beta}_A(\{0\}) \coloneqq -\infty$, $\underline{\beta}_A(\{0\}) \coloneqq +\infty$.
In Section 2 we study these Bohl exponents and their properties as a preparation to define the new notion of Bohl spectrum for equation \eqref{1} based on Bohl exponents
\begin{equation*}
   \Sigma_{B}(A)
   \coloneqq
   \bigcup_{\substack{L \subseteq \mathbb{R}^d\\ \operatorname{dim} L = 1}}
   \big[\underline{\beta}_A(L), \overline{\beta}_A(L)\big]
\end{equation*}
in Section 3. The main result of Section 3 is the Bohl Spectral Theorem \ref{T3} which states that the Bohl spectrum is the non-empty disjoint union of at most $d$ bounded intervals with a corresponding filtration of subspaces consisting of initial values of solutions with corresponding growth rates.
Section 4 is devoted to a new notion of spectrum based on Bohl dichotomy.

The following definition of Bohl dichotomy has been introduced in \cite{Barreira2018} where it is called weak exponential dichotomy.
\begin{definition}[Bohl dichotomy]\label{bohl}
System \eqref{1} has a \emph{Bohl dichotomy (BD)} if there exist subspaces $L_1, L_2 \subseteq \mathbb{R}^d$ with $\mathbb{R}^{d}=L_{1} \oplus L_{2}$, $\alpha >0$ and functions $C_{1}, C_{2} \colon \mathbb{R}^{d}\rightarrow \left( 0,\infty \right) $ such
that
\begin{align}
   \| x(n, x_0) \|
   &\leq
   C_1(x_0) \mathrm e^{-\alpha (n-m)} \| x(m, x_0) \|,
   & \!\! x_0 \in L_1,
   n \geq m,
   \label{11}
\\
   \| x(n, x_0) \|
   &\geq
   C_2(x_0) \mathrm e^{\alpha (n-m)} \| x(m, x_0) \|,
   & \!\! x_0 \in L_2, n \geq m.
   \label{12}
\end{align}
\end{definition}
It is a hyperbolicity notion for \eqref{1} which is similar to exponential dichotomy but weaker in the sense that the constants $C_1$, $C_2$ in \eqref{11}, \eqref{12} are allowed to depend on the solution $x(\cdot,x_0)$ parametrized by $x_0$ in $L_1$ and $L_2$, respectively.
The main result of Section 4 is the Bohl Dichotomy Spectral Theorem \ref{thm:Bohl-Dichotomy-Spectrum} which states that the new notion of Bohl dichotomy spectrum
\begin{equation*}
   \Sigma _{\mathrm{BD}}(A)
   \coloneqq
   \left\{ \gamma \in \mathbb{R}:x(n+1) = \mathrm e^{-\gamma }A(n)x(n)%
    \text{ has no Bohl dichotomy}\right\}
\end{equation*}
is the non-empty disjoint union of at most $d$ compact intervals with a corresponding filtration of subspaces consisting of initial values of solutions with corresponding growth rates.
In Section 5 the new notions of Bohl spectrum and  Bohl dichotomy spectrum are compared with each other and also with the well-known exponential dichotomy spectrum
\begin{equation*}
   \Sigma _{\mathrm{ED}}(A)
   \coloneqq
   \left\{ \gamma \in \mathbb{R} : x(n+1) = \mathrm e^{-\gamma }A(n)x(n)%
    \text{ has no exponential dichotomy} \right\} .
\end{equation*}
In case the linear system \eqref{1} is the linearization of a nonlinear difference equation $x(n+1) = f(n,x(n))$ along a solution $x^*$, i.e.\ $A(n) \coloneqq \frac{\partial f}{\partial x}(n,x^*(n))$, then the stability properties of $x^*$ are related to the spectral properties of its linearization \eqref{1}. This problem and the related theorem of linearized asymptotic stability will be the topic of further research.

\section{Bohl exponents}

A reader who is experienced with characteristic numbers like the Bohl exponents
\begin{align}\label{eq:bohl-recalled1}
   \overline{\beta}_{A}(x_0)
   \coloneqq
   \inf_{N \in \mathbb{N}}
   \sup_{n - m > N} \frac{1}{n-m} \ln \frac{\| x(n, x_0) \|}{\| x(m, x_0) \|},
\\ \label{eq:bohl-recalled2}
   \underline{\beta}_{A}(x_0)
   \coloneqq
   \sup_{N \in \mathbb{N}}
   \inf_{n - m > N} \frac{1}{n-m} \ln \frac{\| x(n, x_0) \|}{\| x(m, x_0) \|},
\end{align}
for $x_0 \in \mathbb{R}^d \setminus\{0\}$, may also be aware of notational and technical challenges when it comes to comparing the existing literature (see also Remark \ref{rem:bohl-history}). The characteristic numbers are often written as a limit superior and limit inferior (see \cite{Doan2017} for a discussion in the continuous time case), respectively, for $n - m \to \infty$
\begin{equation*}
   \overline{\beta}_{A}(x_0)
   =
   \limsup_{n-m \to \infty}
   \tfrac{1}{n-m}
   \ln \tfrac{\| x(n, x_0) \|}{\| x(m, x_0) \|}
   \quad \text{and} \quad
   \underline{\beta}_{A}(x_0)
   =
   \liminf_{n-m \to \infty}
   \tfrac{1}{n-m}
   \ln \tfrac{\| x(n, x_0) \|}{\| x(m, x_0) \|}.
\end{equation*}
Notationally this can be either accepted as an abbreviation of \eqref{eq:bohl-recalled1} and \eqref{eq:bohl-recalled2}, or it can be understood as limit superior and limit inferior \cite[p.\ 217]{Megginson1998}
\begin{align*}
   \limsup_{(n,m) \in D} \lambda(n,m)
   &\coloneqq
   \inf_{(n_0,m_0) \in D}
   \sup \{\lambda(n,m) : (n,m) \geq (n_0,m_0)\}
\\
   \liminf_{(n,m) \in D} \lambda(n,m)
   &\coloneqq
   \sup_{(n_0,m_0) \in D}
   \inf \{\lambda(n,m) : (n,m) \geq (n_0,m_0)\}
\end{align*}
of the real-valued net
\begin{equation*}
   \lambda(n,m) \coloneqq \frac{1}{n-m} \ln \frac{\| x(n, x_0) \|}{\| x(m, x_0) \|},
   \qquad
   (n,m) \in D
\end{equation*}
on the directed set $(D,\leq)$ \cite[Definition 2.1.8]{Megginson1998} with
\begin{equation*}
   D \coloneqq \{(n,m) \in \mathbb{N}^2 \colon n > m\}
\end{equation*}
and preorder $\leq$ on $D$
\begin{equation}\label{eq:preorder}
(n_0, m_0) \leq (n, m)
   \quad :\Leftrightarrow \quad
   n_0 - m_0 \leq n - m.
\end{equation}
This can be seen e.g.\ for $\overline{\beta}_{A}(x_0)$ by using \eqref{eq:bohl-recalled1} and rewriting
\begin{align*}
   \overline{\beta}_{A}(x_0)
   &=
   \inf_{N \in \mathbb{N}}
   \sup \{\lambda(n,m) : n - m > N\}
\\
   &=
   \inf_{(n_0,m_0) \in D}
   \sup \{\lambda(n,m) : n - m \geq n_0 - m_0\}
\\
   &=
   \limsup_{(n,m) \in D} \lambda(n,m).
\end{align*}
The concept of limit superior and limit inferior of a real-valued net also helps to understand an alternative representation of the Bohl exponent which also can be found in the literature (see e.g.\ the monograph \cite[Chapter III]{DaleckiKrein1974} for the continuous time case) and where not only $n - m \to \infty$ but also $m \to \infty$
\begin{equation*}
   \overline{\beta}_{A}(x_0)
   =
   \limsup_{\substack{n-m \to \infty\\ m \to \infty}}
   \tfrac{1}{n-m}
   \ln \tfrac{\| x(n, x_0) \|}{\| x(m, x_0) \|}
   \quad \text{and} \quad
   \underline{\beta}_{A}(x_0)
   =
   \liminf_{\substack{n-m \to \infty\\ m \to \infty}}
   \tfrac{1}{n-m}
   \ln \tfrac{\| x(n, x_0) \|}{\| x(m, x_0) \|}.
\end{equation*}
If the preorder \eqref{eq:preorder} is replaced by
\begin{equation*}
   (n_0, m_0) \leq (n, m)
   \quad :\Leftrightarrow \quad
   n_0 - m_0 \leq n - m
   \;\wedge\;
   n_0 - m_0 \leq m
\end{equation*}
then
\begin{align*}
   \limsup_{(n,m) \in D} \lambda(n,m)
   &=
   \inf_{(n_0,m_0) \in D}
   \sup \{\lambda(n,m) : n - m \geq n_0 - m_0, m \geq n_0 - m_0\}
\\
   &=
   \inf_{N \in \mathbb{N}}
   \sup \{\lambda(n,m) : n - m > N, m > N\}
   =:
   \limsup_{\substack{n-m \to \infty\\ m \to \infty}} \lambda(n,m).
\end{align*}
The following lemma shows that the upper Bohl exponent $\overline{\beta}_A(L)$ in \eqref{BohlOnL1} equals
\begin{align*}
   &
   \limsup_{\substack{n-m\to\infty\\m\to\infty}}\sup_{x_0\in L\setminus\{0\}}\frac 1{n-m}\ln\frac{\Vert x(n,x_0)\Vert}{\Vert x(m,x_0)\Vert}
\\
   &\coloneqq
   \inf_{N \in \mathbb{N}}
   \sup_{\substack{n - m > N \\ m > N}}
   \sup_{x_0 \in  L \setminus \{0\}}\frac{1}{n-m} \ln \frac{\| x(n, x_0) \|}{\| x(m, x_0) \|},
\end{align*}
and the lower Bohl exponent $\underline{\beta}_A(L)$ in \eqref{BohlOnL2} equals
\begin{align*}
   &
   \liminf_{\substack{n-m\to\infty\\m\to\infty}}\inf_{x_0\in L\setminus\{0\}}\frac 1{n-m}\ln\frac{\Vert x(n,x_0)\Vert}{\Vert x(m,x_0)\Vert}
\\
   &\coloneqq
   \sup_{N \in \mathbb{N}}
   \inf_{\substack{n - m > N \\ m > N}}
   \inf_{x_0 \in  L \setminus \{0\}}\frac{1}{n-m} \ln \frac{\| x(n, x_0) \|}{\| x(m, x_0) \|},
\end{align*}
see also \cite{Doan2017}.
Also note, that above and in the definitons of the Bohl exponents \eqref{BohlOnL1} and \eqref{BohlOnL2} the supremum resp.\ infimum over \(N\in\N\) can always be replaced by \(\lim_{N\to\infty}\) by monotonicity.
The fact that $A$ is a Lyapunov sequence plays an important role as pointed out in the next remark.

\begin{remark}[Bounds on transition matrix of Lyapunov sequence]\label{rem:PhiBounds}
Let $n, m \in \mathbb{N}$, $x_0 \in \mathbb{R}^d$. Without referencing, we use the estimates
\begin{equation*}
   \|\Phi_A(n,m)\|
   \leq
   \|A\|_\infty^{n-m}
   \text{ for }
   n \geq m
   \quad \text{and} \quad
   \|\Phi_A(n,m)\|
   \leq
   \|A^{-1}\|_\infty^{n-m}
   \text{ for }
   n \leq m.
\end{equation*}
Moreover, $\Vert A\Vert_\infty \geq 1$ or $\Vert A^{-1}\Vert_\infty \geq 1$, so that $\ln(\max\{\Vert A\Vert_\infty, \Vert A^{-1}\Vert_\infty\}) \geq 0$.
\end{remark}

\begin{lemma}[Alternative representations of Bohl exponents]\label{lem:repr-Bohl-exp}
Let $L \subseteq \mathbb{R}^d$ be a subspace. Then
\begin{align*}
   \overline{\beta}_A(L)
   &=
   \limsup_{\substack{n-m \to \infty \\m \to \infty}}
   \sup_{x_0 \in  L \setminus \{0\}} \frac{1}{n-m} \ln \frac{\| x(n, x_0) \|}{\| x(m, x_0) \|},
\\
   \underline{\beta}_{A}(L)
   &=
   \liminf_{\substack{n-m \to \infty \\m \to \infty}}
   \inf_{x_0 \in  L \setminus \{0\}} \frac{1}{n-m} \ln \frac{\| x(n, x_0) \|}{\| x(m, x_0) \|}.
\end{align*}
\end{lemma}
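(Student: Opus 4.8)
The plan is to prove each identity by two inequalities; since $\overline{\beta}_A(\{0\}) = -\infty$ and $\underline{\beta}_A(\{0\}) = +\infty$ by convention, the case $L=\{0\}$ is immediate, so I assume $L \neq \{0\}$. Abbreviate $\lambda(n,m,x_0) \coloneqq \frac{1}{n-m}\ln\frac{\|x(n,x_0)\|}{\|x(m,x_0)\|}$ for $(n,m)\in D$, $x_0\in L\setminus\{0\}$, and $M \coloneqq \max\{\|A\|_\infty,\|A^{-1}\|_\infty\} \geq 1$ (Remark~\ref{rem:PhiBounds}). One inequality needs no work and no hypothesis on $A$: for every $N$ one has $\{(n,m): n-m>N,\ m>N\} \subseteq \{(n,m): n-m>N\}$, so passing to the smaller index set only decreases the inner double supremum (resp.\ increases the inner double infimum), and taking $\inf_N$ (resp.\ $\sup_N$) yields $\limsup_{n-m\to\infty,\,m\to\infty}\sup_{x_0\in L\setminus\{0\}}\lambda \leq \overline{\beta}_A(L)$ and $\liminf_{n-m\to\infty,\,m\to\infty}\inf_{x_0\in L\setminus\{0\}}\lambda \geq \underline{\beta}_A(L)$.

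For the reverse inequalities the one nontrivial ingredient is an estimate showing that translating the left endpoint of the averaging window by a \emph{bounded} amount perturbs $\lambda$ by $O(1/(n-m))$, uniformly in $x_0$; this is where the Lyapunov-sequence hypothesis is used. Remark~\ref{rem:PhiBounds} gives $|\lambda(n,m,x_0)| \leq \ln M$ for all admissible $(n,m,x_0)$, and the identities $x(m',x_0)=\Phi_A(m',m)x(m,x_0)$, $x(m,x_0)=\Phi_A(m,m')x(m',x_0)$ give $\bigl|\ln\|x(m',x_0)\|-\ln\|x(m,x_0)\|\bigr| \leq |m-m'|\ln M$. Substituting both into the algebraic identity
\[
   \lambda(n,m',x_0)-\lambda(n,m,x_0)
   = \frac{\ln\|x(m,x_0)\|-\ln\|x(m',x_0)\|}{n-m'}
   + \bigl(\ln\|x(n,x_0)\|-\ln\|x(m,x_0)\|\bigr)\frac{m'-m}{(n-m')(n-m)}
\]
then gives $|\lambda(n,m',x_0)-\lambda(n,m,x_0)| \leq \frac{2|m-m'|\ln M}{n-m'}$.

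With this in hand I would finish the $\overline{\beta}_A(L)$ case as follows. Fix any $c$ strictly greater than $\limsup_{n-m\to\infty,\,m\to\infty}\sup_{x_0\in L\setminus\{0\}}\lambda$, and choose $N_0$ with $\sup\{\lambda(n,m,x_0): n-m>N_0,\ m>N_0,\ x_0\in L\setminus\{0\}\} < c$. For $N > 2N_0+1$ and any $(n,m)\in D$ with $n-m>N$ and $x_0\in L\setminus\{0\}$ (so $n\geq N+1$): if $m > N_0$ then $(n,m)$ already lies in the restricted window, so $\lambda(n,m,x_0) < c$; if $m \leq N_0$, put $m' \coloneqq N_0+1$, note $(n,m')$ lies in the restricted window (since $m'>N_0$ and $n-m' \geq N-N_0 > N_0$), so $\lambda(n,m',x_0) < c$, and the perturbation estimate with $|m-m'| \leq N_0+1$ and $n-m' \geq N-N_0$ gives $\lambda(n,m,x_0) < c + \tfrac{2(N_0+1)\ln M}{N-N_0}$. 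Hence $\sup\{\lambda(n,m,x_0): n-m>N,\ x_0\in L\setminus\{0\}\} \leq c + \tfrac{2(N_0+1)\ln M}{N-N_0}$ for all large $N$, so $\overline{\beta}_A(L) \leq c$; letting $c$ decrease to the $\limsup$ gives the remaining inequality, hence equality. The identity for $\underline{\beta}_A(L)$ follows by the mirror-image argument with all inequalities reversed and suprema replaced by infima.

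I expect the only real obstacle to be the perturbation estimate of the second paragraph — specifically, organizing the algebra so that the error bound has $n-m'$ in the denominator rather than $n-m'$ multiplied by an unbounded factor, which hinges on $\|A\|_\infty$ and $\|A^{-1}\|_\infty$ being finite; the manipulations with the monotone families indexed by $N$ are routine bookkeeping.
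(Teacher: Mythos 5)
Your proof is correct and follows essentially the same strategy as the paper: the easy inequality comes from shrinking the index set, and the harder direction is handled by shifting a small $m$ up to a fixed threshold $m'$ and using the Lyapunov-sequence bounds to control the change in $\lambda$, which vanishes as $n-m\to\infty$. Your symmetric perturbation estimate $|\lambda(n,m',x_0)-\lambda(n,m,x_0)|\leq \frac{2|m-m'|\ln M}{n-m'}$ is a cleaner packaging of the same idea the paper implements with the parametrization $n-m>N^2$ versus $n-m>N$, and it transparently handles the sign of $\ln\frac{\|x(n,x_0)\|}{\|x(m',x_0)\|}$ where the paper's chain of inequalities is more delicate.
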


\begin{proof}
For \(n,m\in\N\) we set
\begin{equation*}
    \overline{\lambda}(n,m)
    \coloneqq
    \sup_{x_0\in L\setminus\{0\}}\frac{1}{n-m}\ln\frac{\Vert x(n,x_0)\Vert}{\Vert x(m,x_0)\Vert}.
\end{equation*}
We prove the statement for $\overline{\beta}_A(L)$. The statement for $\underline{\beta}_{A}(L)$ follows similarly by studying
$\underline{\lambda}(n,m) \coloneqq \inf_{x_0\in L\setminus\{0\}}\frac{1}{n-m}\ln\frac{\Vert x(n,x_0)\Vert}{\Vert x(m,x_0)\Vert}$ instead of $\overline{\lambda}(n,m)$.

For each \(N\in\N\) it holds that
\begin{equation*}
    \sup_{n-m>N}\overline{\lambda}(n,m)
    \geq
    \sup_{\substack{n-m>N\\m>N}}\overline{\lambda}(n,m).
\end{equation*}
Hence
\begin{equation*}
    \inf_{N\in\N}\sup_{n-m>N}\overline{\lambda}(n,m)
    \geq
    \inf_{N\in\N}\sup_{\substack{n-m>N\\m>N}}\overline{\lambda}(n,m).
\end{equation*}
For the converse inequality, we show that for \(C\coloneqq\max\{\Vert A\Vert_\infty, \Vert A^{-1}\Vert_\infty\}\) for each \(N\in\N\), \(N\geq 3\), and  \(n,m\in\N\) with \(n-m>N^2\)
\begin{equation}\label{lem:claim}
    \overline{\lambda}(n,m)
    \leq
    \sup_{\substack{u-w>N\\w>N}}\overline{\lambda}(u,w)+\frac{\ln C}{N}.
\end{equation}
Then with \eqref{lem:claim} it follows for each \(N\in\N\), \(N\geq 3\), that
\begin{equation*}
    \sup_{n-m>N^2}\overline{\lambda}(n,m)
    \leq
    \sup_{\substack{n-m>N\\m>N}}\overline{\lambda}(n,m)+\frac{\ln C}{N}.
\end{equation*}
Then, letting \(N\) tend to infinity and noting that all limits exist
\begin{align*}
    \inf_{N\in\N}
    \sup_{n-m>N}\overline{\lambda}(n,m)
    &=
    \lim_{N\to\infty}
    \sup_{n-m>N^2}\overline{\lambda}(n,m)
\\
    & \leq
    \lim_{N\to\infty}
    \Bigg( \sup_{\substack{n-m>N\\m>N}}\overline{\lambda}(n,m)+\frac{\ln C}{N} \Bigg)
\\
    &=
    \inf_{N\in\N}
    \sup_{\substack{n-m>N\\m>N}}\overline{\lambda}(n,m),
\end{align*}
and the claim follows.

To show \eqref{lem:claim}, let \(N\in\N\), \(N\geq 3\), and \(n,m\in\N\) with \(n-m>N^2\). First we assume that \(m\leq N\).
Then for \(x_0\in L\setminus\{0\}\), noting that \(n-(N+1)>N\) because \(N\geq 3\), we have
\begin{align*}
    \frac{1}{n-m}\ln\frac{\Vert x(n,x_0)\Vert}{\Vert x(m,x_0)\Vert}
    &=
    \frac{1}{n-m}\ln\frac{\Vert x(n,x_0)\Vert}{\Vert \Phi_A(m,N+1)x(N+1,x_0)\Vert}
    \\
    &\leq\frac{1}{n-m}\ln\frac{\Vert x(n,x_0)\Vert\cdot\Vert\Phi_A(N+1,m)\Vert}{\Vert x(N+1,x_0)\Vert}
    \\
    &\leq
    \frac{1}{n-m}\ln\frac{\Vert x(n,x_0)\Vert}{\Vert x(N+1,x_0)\Vert}
    +\frac{1}{n-m}\ln\Vert\Phi_A(N+1,m)\Vert
    \\
    &\leq
    \frac{1}{n-(N+1)}\ln\frac{\Vert x(n,x_0)\Vert}{\Vert x(N+1,x_0)\Vert}
    +\frac{1}{n-m}\ln C^{N-m}
    \\
    &\leq
    \overline{\lambda}(n,N+1)+\frac{N-m}{n-m}\ln C
    \\
    &\leq
    \sup_{\substack{u-w>N\\w>N}}\overline{\lambda}(u,w)+\frac{N}{N^2}\ln C
    \\
    &=
    \sup_{\substack{u-w>N\\w>N}}\overline{\lambda}(u,w)+\frac{\ln C}{N},
\end{align*}
i.e., in case \(m \leq N\), by taking the supremum over \(x_0\in L\setminus\{0\}\)
\begin{equation*}
    \overline{\lambda}(n,m)\leq\sup_{\substack{u-w>N\\w>N}}\overline{\lambda}(u,w)+\frac{\ln C}{N}.
\end{equation*}
In case \(m > N\), note that \(n-m>N^2 \geq N\), and hence also
\begin{align*}
    \overline{\lambda}(n,m)
    &\leq
    \sup_{\substack{u-w>N\\w>N}}\overline{\lambda}(u,w)
\\
    &\leq
    \sup_{\substack{u-w>N\\w>N}}\overline{\lambda}(u,w)+\frac{\ln C}{N},
\end{align*}
proving \eqref{lem:claim}.
\end{proof}

In the following lemma we formulate several properties of Bohl exponents which will be used throughout the paper.

\begin{lemma}[Properties of Bohl exponents]\label{lem:bohlproperties}
Let $L, L_1, L_2$ be subspaces of $\mathbb{R}^d$.
The Bohl exponents defined in \eqref{BohlOnL1} and \eqref{BohlOnL2} satisfy the following properties:

(i) (Bounds) If $L \neq \{0\}$ then
\begin{equation*}
    - \ln \|A^{-1}\|_{\infty} \leq \underline{\beta}_A(L) \leq \overline{\beta}_A(L) \leq \ln \|A\|_{\infty}.
\end{equation*}
Moreover, $\underline{\beta}_A(\{0\}) = \infty$ and $\overline{\beta}_A(\{0\}) = -\infty$.

(ii) (Monotonicity) If $\{0\} \neq L_1 \subseteq L_2$ then
\begin{equation*}
   [\underline{\beta}_A(L_1), \overline{\beta}_A(L_1)] \subseteq [\underline{\beta}_A(L_2), \overline{\beta}_A(L_2)].
\end{equation*}

(iii) (Bohl exponents describe exponential growth on subspaces)
Let \(L\subseteq\R^d\), $L \neq \{0\}$, be a subspace and \(\gamma\in\R\). Then
\begin{equation*}
   \gamma > \overline{\beta}_A(L)
   \quad \Rightarrow \quad
   \begin{matrix}
      \exists K(\gamma) > 0 \;
      \forall x_0 \in L \;
      \forall n > m :
      \\[0.3ex]
      \Vert x(n,x_0) \Vert \leq K \mathrm e^{\gamma(n-m)} \Vert x(m,x_0) \Vert
   \end{matrix}
   \quad \Rightarrow \quad
   \gamma \geq \overline{\beta}_A(L)
\end{equation*}
and
\begin{equation*}
   \gamma < \underline{\beta}_A(L)
   \quad \Rightarrow \quad
   \begin{matrix}
      \exists K(\gamma) > 0 \;
      \forall x_0 \in L \;
      \forall n > m :
      \\[0.3ex]
      \Vert x(n,x_0) \Vert \geq K \mathrm e^{\gamma(n-m)} \Vert x(m,x_0) \Vert
   \end{matrix}
   \quad \Rightarrow \quad
   \gamma \leq \underline{\beta}_A(L).
\end{equation*}

(iv) (Bohl exponents of one-dimensional subspaces)
If $\dim L = 1$ and $x_0 \in L \setminus\{0\}$ then
\begin{equation*}
    \underline\beta_A(x_0)
    =
    \underline\beta_A(L)
    \qquad\text{and}\qquad
    \overline\beta_A(x_0)
    =
    \overline\beta_A(L).
\end{equation*}
In particular, for each $\alpha \in \mathbb{R} \setminus \{0\}$
\begin{equation*}
    \underline\beta_A(x_0)
    =
    \underline\beta_A(\alpha x_0)
    \qquad\text{and}\qquad
    \overline\beta_A(x_0)
    =
    \overline\beta_A(\alpha x_0).
\end{equation*}


(v) (Lower Bohl exponent for exponentially decaying solutions)
Let \(x_0,x_1\in\R^d\), with \(x_0+x_1\in\R^d\setminus\{0\}\).
If \(\overline\beta_A(x_0),\overline\beta_A(x_1)<0\), then \(\underline\beta_A(x_0+x_1)\leq 0\).

(vi) (Lower Bohl exponent for exponentially decaying perturbations)
Let \(x_0\in\R^d\setminus\{0\}\), \(x_1\in\R^d\).
Suppose that \(\underline\beta_A(x_0)>0\) and \(\overline\beta_A(x_1)<0\).
Then
\begin{equation*}
    \underline\beta_A(x_0+x_1)\geq\underline\beta_A(x_0).
\end{equation*}
\end{lemma}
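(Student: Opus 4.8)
The plan is to establish (i)--(vi) in order, since each step feeds the next. For \textbf{(i)}, I would invoke Remark~\ref{rem:PhiBounds}: for $x_0\in L\setminus\{0\}$ and $n>m$, from $x(n,x_0)=\Phi_A(n,m)x(m,x_0)$ one gets $\tfrac{1}{n-m}\ln\tfrac{\|x(n,x_0)\|}{\|x(m,x_0)\|}\le\ln\|A\|_\infty$, and from $x(m,x_0)=\Phi_A(m,n)x(n,x_0)$ one gets the reverse bound $\ge-\ln\|A^{-1}\|_\infty$; inserting these into \eqref{BohlOnL1}, \eqref{BohlOnL2} gives the two outer bounds, while $\underline\beta_A(L)\le\overline\beta_A(L)$ follows from Lemma~\ref{lem:repr-Bohl-exp} together with the pointwise inequality $\inf_{x_0}\lambda(n,m,x_0)\le\sup_{x_0}\lambda(n,m,x_0)$ for each fixed $(n,m)$; the values at $L=\{0\}$ are the stated conventions. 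Item \textbf{(ii)} is then immediate: $\{0\}\neq L_1\subseteq L_2$ implies $L_1\setminus\{0\}\subseteq L_2\setminus\{0\}$, so the inner supremum in \eqref{BohlOnL1} can only increase and the inner infimum in \eqref{BohlOnL2} can only decrease, whence $\overline\beta_A(L_1)\le\overline\beta_A(L_2)$ and $\underline\beta_A(L_1)\ge\underline\beta_A(L_2)$; combined with (i) this gives the inclusion.

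For \textbf{(iii)} I would prove both implications of the first chain (the second is symmetric). If $\gamma>\overline\beta_A(L)$, the infimum over $N$ in \eqref{BohlOnL1} yields an $N_0$ with $\|x(n,x_0)\|\le\mathrm{e}^{\gamma(n-m)}\|x(m,x_0)\|$ for all $x_0\in L\setminus\{0\}$ and all $n-m>N_0$; for the finitely many pairs $0<n-m\le N_0$, Remark~\ref{rem:PhiBounds} gives $\|x(n,x_0)\|\le\|A\|_\infty^{n-m}\|x(m,x_0)\|\le K\,\mathrm{e}^{\gamma(n-m)}\|x(m,x_0)\|$ with $K\coloneqq\max\{1,\max_{1\le j\le N_0}(\|A\|_\infty\mathrm{e}^{-\gamma})^{j}\}$, and this $x_0$-independent $K\ge1$ also covers $n-m>N_0$ (the estimate being trivial at $x_0=0$). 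Conversely, from $\|x(n,x_0)\|\le K\mathrm{e}^{\gamma(n-m)}\|x(m,x_0)\|$ one obtains $\sup_{x_0\in L\setminus\{0\}}\lambda(n,m,x_0)\le\gamma+\tfrac{\ln K}{n-m}$, hence $\overline\beta_A(L)\le\inf_N\sup_{n-m>N}\bigl(\gamma+\tfrac{\ln K}{n-m}\bigr)=\gamma$. Item \textbf{(iv)} rests on $x(n,\alpha x_0)=\alpha\,x(n,x_0)$, so the quotient $\|x(n,\alpha x_0)\|/\|x(m,\alpha x_0)\|$ is independent of $\alpha\neq0$; thus when $\dim L=1$ and $x_0\in L\setminus\{0\}$ the inner supremum (resp.\ infimum) over $L\setminus\{0\}$ equals the value at $x_0$, giving $\overline\beta_A(L)=\overline\beta_A(x_0)$ and $\underline\beta_A(L)=\underline\beta_A(x_0)$ in the sense of \eqref{eq:bohl-recalled1}, \eqref{eq:bohl-recalled2}, and the ``in particular'' part follows from $\R x_0=\R(\alpha x_0)$.

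The substance lies in \textbf{(v)} and \textbf{(vi)}, where I would combine (iii), (iv), the triangle inequality, and the $n-m\to\infty$, $m\to\infty$ representation from Lemma~\ref{lem:repr-Bohl-exp}. For \textbf{(v)}: fix $\gamma$ with $\max\{\overline\beta_A(x_0),\overline\beta_A(x_1)\}<\gamma<0$; applying (iii) (on $\R x_0$, resp.\ $\R x_1$, via (iv)) at $m=0$ gives $\|x(n,x_0)\|\le K_0\mathrm{e}^{\gamma n}\|x_0\|$ and $\|x(n,x_1)\|\le K_1\mathrm{e}^{\gamma n}\|x_1\|$, so $\|x(n,x_0+x_1)\|\le(K_0\|x_0\|+K_1\|x_1\|)\mathrm{e}^{\gamma n}\to0$; if $\underline\beta_A(x_0+x_1)$ were positive, the lower chain of (iii) applied at $m=0$ would force $\|x(n,x_0+x_1)\|\to\infty$ (using $x_0+x_1\neq0$), a contradiction, so $\underline\beta_A(x_0+x_1)\le0$. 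For \textbf{(vi)}: first $x_0+x_1\neq0$, since otherwise $\overline\beta_A(x_1)=\overline\beta_A(-x_0)=\overline\beta_A(x_0)\ge\underline\beta_A(x_0)>0$ by (i) and (iv), contradicting $\overline\beta_A(x_1)<0$. Pick $\gamma$ with $0<\gamma<\underline\beta_A(x_0)$ and $\nu$ with $\overline\beta_A(x_1)<\nu<0$; by (iii), (iv) there are $K_0,K_1>0$ with $\|x(n,x_0)\|\ge K_0\mathrm{e}^{\gamma n}\|x_0\|$ and $\|x(n,x_1)\|\le K_1\mathrm{e}^{\nu n}\|x_1\|$, and since $\nu<\gamma$ the quotient $\|x(k,x_1)\|/\|x(k,x_0)\|\to0$, so there is $N^\ast$ with $\|x(k,x_1)\|\le\tfrac12\|x(k,x_0)\|$ for all $k\ge N^\ast$. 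Then for $n>m\ge N^\ast$ the triangle inequality gives $\|x(n,x_0+x_1)\|\ge\tfrac12\|x(n,x_0)\|$ and $\|x(m,x_0+x_1)\|\le2\|x(m,x_0)\|$, hence
\[
   \frac{1}{n-m}\ln\frac{\|x(n,x_0+x_1)\|}{\|x(m,x_0+x_1)\|}
   \;\ge\;
   -\frac{\ln4}{n-m}+\frac{1}{n-m}\ln\frac{\|x(n,x_0)\|}{\|x(m,x_0)\|}.
\]
Taking $\liminf$ as $n-m\to\infty$, $m\to\infty$, using Lemma~\ref{lem:repr-Bohl-exp} with (iv) for the lines $\R(x_0+x_1)$ and $\R x_0$, and noting that the first right-hand summand tends to $0$, we conclude $\underline\beta_A(x_0+x_1)\ge\underline\beta_A(x_0)$.

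I expect the main obstacle to be twofold. In (iii), the delicate point is to bound the finitely many ``short'' pairs $0<n-m\le N_0$ by one constant $K$ that is uniform in $x_0$; the Lyapunov-sequence hypothesis, through Remark~\ref{rem:PhiBounds}, is precisely what supplies this. More substantially, the heart of (v) and (vi) is to make rigorous that the dominant exponent prevails: it is the \emph{strict} separation of growth rates (both negative in (v); $\overline\beta_A(x_1)<0<\underline\beta_A(x_0)$ in (vi)) that renders the perturbation negligible past a threshold $N^\ast$, after which the triangle-inequality estimates transfer the growth rate of $x(\cdot,x_0)$ to $x(\cdot,x_0+x_1)$; and one must take the $\liminf$ along the correct net (with $m\to\infty$, not merely $n-m\to\infty$), which is exactly why Lemma~\ref{lem:repr-Bohl-exp} is used in place of the bare definitions \eqref{BohlOnL1}, \eqref{BohlOnL2}.
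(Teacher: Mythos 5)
Your proof is correct and follows essentially the same route as the paper: direct estimates via Remark~\ref{rem:PhiBounds} for (i) and (ii), the split into ``long'' differences (handled by the definition of the Bohl exponent) and ``short'' differences (handled by a uniform constant) for (iii), the scaling identity $x(\cdot,\alpha x_0)=\alpha x(\cdot,x_0)$ for (iv), the contradiction via $\Vert x(n,x_0+x_1)\Vert\to0$ versus $\to\infty$ for (v), and the threshold-plus-triangle-inequality scheme for (vi). The one place you diverge in a noteworthy way is in the short-difference case of (iii): the paper routes through $\Phi_A(m+N+1,0)x_0$, applying the long-range estimate at the shifted time $m+N+1$ and then going backwards, which produces a constant $K$ depending on $\Vert A^{-1}\Vert_\infty$; you instead use the direct bound $\Vert\Phi_A(n,m)\Vert\leq\Vert A\Vert_\infty^{\,n-m}$ for $0<n-m\leq N_0$ and absorb the finitely many mismatches $(\Vert A\Vert_\infty\mathrm e^{-\gamma})^j$, $1\leq j\leq N_0$, into $K$. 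This is simpler and avoids $\Vert A^{-1}\Vert_\infty$ entirely; it buys a cleaner constant at no loss of generality. In (vi) you also conclude a shade more directly than the paper: your inequality compares the ratios $\Vert x(n,x_0+x_1)\Vert/\Vert x(m,x_0+x_1)\Vert$ and $\Vert x(n,x_0)\Vert/\Vert x(m,x_0)\Vert$ without $\widehat\gamma$ in the exponent, so a single $\liminf$ (with the vanishing correction $-\tfrac{\ln 4}{n-m}$) gives $\underline\beta_A(x_0+x_1)\geq\underline\beta_A(x_0)$ in one step, whereas the paper obtains $\geq\widehat\gamma$ for each $\widehat\gamma<\underline\beta_A(x_0)$ and then passes to the limit in $\widehat\gamma$. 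Two minor omissions: in (v) you do not explicitly dispose of the degenerate cases $x_0=0$ or $x_1=0$ (trivial, via (i), but worth a sentence as the paper does), and your stated $K$ in (iii) should dominate $1$ so that it also covers the long-difference regime, which you did handle correctly by taking the outer $\max$ with $1$.
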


\begin{proof}
(i) To show $\underline{\beta}_A(L) \leq \overline{\beta}_A(L)$, we compute
\begin{align*}
    \underline{\beta}_A(L)
    &=
    \lim_{N \to \infty}
    \inf_{n - m > N}
    \inf_{x_0\in L\setminus\{0\}}\frac{1}{n-m}\ln\frac{\Vert x(n,x_0)\Vert}{\Vert x(m,x_0)\Vert}
    \\
    &\leq
    \lim_{N \to \infty}
    \sup_{n - m > N}
    \sup_{x_0\in L\setminus\{0\}}\frac{1}{n-m}\ln\frac{\Vert x(n,x_0)\Vert}{\Vert x(m,x_0)\Vert}
    =
    \overline{\beta}_A(L).
\end{align*}
To show $\overline{\beta}_A(L) \leq \ln \|A\|_{\infty}$ and $-\ln \|A^{-1}\|_{\infty}\leq\underline{\beta}_A(L)$, we note that
\begin{align*}
    \frac{\Vert x(n,x_0)\Vert}{\Vert x(m,x_0)\Vert}
    &= \frac{\Vert \Phi_A(n,m)x(m,x_0)\Vert}{\Vert x(m,x_0)\Vert}
    \leq \Vert\Phi_A(n,m)\Vert
    \leq \Vert A\Vert_\infty^{n-m},
    \\[1ex]
    \frac{\Vert x(n,x_0)\Vert}{\Vert x(m,x_0)\Vert}
    & = \frac{\Vert x(n,x_0)\Vert}{\Vert \Phi_A(m,n) x(n,x_0)\Vert}
    \geq \Vert\Phi_A(m,n)\Vert^{-1}
    \geq \Vert A^{-1}\Vert_\infty^{-(n-m)}.
\end{align*}

(ii) We prove that $\overline{\beta}_A(L_1) \leq \overline{\beta}_A(L_2)$.
The estimate $\underline{\beta}_A(L_1) \geq \underline{\beta}_A(L_2)$ is shown similarly.
Since \(L_1\subseteq L_2\), it follows for \(m,n\in\N\) with \(n>m\), that
\begin{equation*}
    \sup_{x_0\in L_1 \setminus\{0\}}\frac{1}{n-m}\ln\frac{\Vert x(n,x_0)\Vert}{\Vert x(m,x_0)\Vert}
    \leq
    \sup_{x_0\in L_2 \setminus\{0\}}\frac{1}{n-m}\ln\frac{\Vert x(n,x_0)\Vert}{\Vert x(m,x_0)\Vert},
\end{equation*}
and therefore for each $N \in \mathbb{N}$
\begin{equation*}
    \sup_{n-m>N}
    \sup_{x_0\in L_1 \setminus\{0\}}\frac{1}{n-m}\ln\frac{\Vert x(n,x_0)\Vert}{\Vert x(m,x_0)\Vert}
    \leq
    \sup_{n-m>N}
    \sup_{x_0\in L_2 \setminus\{0\}}\frac{1}{n-m}\ln\frac{\Vert x(n,x_0)\Vert}{\Vert x(m,x_0)\Vert},
\end{equation*}
proving that
\begin{align*}
    \overline\beta(L_1)
    &=
    \lim_{N \to \infty}\sup_{n-m>N}\sup_{x_0\in L_1 \setminus\{0\}}\frac{1}{n-m}\ln\frac{\Vert x(n,x_0)\Vert}{\Vert x(m,x_0)\Vert}
\\
    &\leq
    \lim_{N\to\infty}\sup_{n-m>N}\sup_{x_0\in L_2 \setminus\{0\}}\frac{1}{n-m}\ln\frac{\Vert x(n,x_0)\Vert}{\Vert x(m,x_0)\Vert}
    =
    \overline\beta(L_2).
\end{align*}

(iii) Let \(\gamma>\overline{\beta}_A(L)\). We show that
\begin{equation}\label{eq:exp-est}
   \exists K(\gamma) > 0 \;
      \forall x_0 \in L \;
      \forall n > m :
      \Vert x(n,x_0) \Vert \leq K \mathrm e^{\gamma(n-m)} \Vert x(m,x_0) \Vert
\end{equation}
and then that \eqref{eq:exp-est} implies $\gamma \geq \overline{\beta}_A(L)$. The second statement follows similarly.

Note that
\begin{equation*}
    \overline{\beta}_A(L) = \lim_{N\to\infty}\sup_{\substack{m,n\in\N,\\n-m>N}}\sup\Bigg\{\frac{1}{n-m}\ln\Bigg(\frac{\Vert x(n,x_0)\Vert}{\Vert x(m,x_0)\Vert}\Bigg):x_0\in L\setminus\{0\}\Bigg\}.
\end{equation*}
Hence for \(\varepsilon\coloneqq\gamma - \overline{\beta}_A(L)>0\), there is \(N\in\N\), such that
\begin{equation*}
    \sup_{\substack{m,n\in\N,\\n-m>N}}\sup\Bigg\{\frac{1}{n-m}\ln\Bigg(\frac{\Vert x(n,x_0)\Vert}{\Vert x(m,x_0)\Vert}\Bigg):x_0\in L\setminus\{0\}\Bigg\} - \overline{\beta}_A(L)
    \leq \varepsilon.
\end{equation*}
That is for \(m,n\in\N\), \(n-m>N\) and \(x_0\in L\setminus\{0\}\)
\begin{align*}
    \frac{1}{n-m}\ln\frac{\Vert x(n,x_0)\Vert}{\Vert x(m,x_0)\Vert}
    \leq \varepsilon + \overline{\beta}_A(L)
    = \gamma,
\end{align*}
respectively
\begin{equation*}
    \Vert x(n,x_0)\Vert < \mathrm e^{\gamma(n-m)}\Vert x(m,x_0)\Vert.
\end{equation*}
Now let \(m,n\in\N\) with \(0<n-m\leq N\). Let \(x_0\in L\setminus\{0\}\).
Using the estimates
\begin{gather*}
    \Vert\Phi_A(m+N+1,0)x_0\Vert
    =
    \Vert x(m+N+1,x_0)\Vert
    \leq
    \mathrm e^{\gamma(N+1)}\Vert x(m,x_0)\Vert,
\\
    \|A^{-1}\|_\infty^{-(n-m)}
    \leq
    \max\{1,\|A^{-1}\|_\infty^N \}
    \qquad\text{and}\qquad
    \mathrm{e}^{-\gamma(n-m)}
    \leq
    \max\{1, \mathrm{e}^{-\gamma N}\},
\end{gather*}
we get
\begin{align*}
    \Vert x(n,x_0)\Vert &= \Vert\Phi_A(n,m+N+1)\Phi_A(m+N+1,0)x_0\Vert
    \\
    &\leq \|A^{-1}\|_\infty^{m+N+1-n} \mathrm e^{\gamma(N+1)}\Vert x(m,x_0)\Vert
    \\
    &= \|A^{-1}\|_\infty^{N+1}\mathrm e^{\gamma(N+1)}
    \|A^{-1}\|_\infty^{-(n-m)}
    \mathrm e^{-\gamma(n-m)} \mathrm e^{\gamma(n-m)}
    \Vert x(m,x_0)\Vert
    \\
    &\leq
    K \mathrm e^{\gamma(n-m)}\Vert x(m,x_0)\Vert,
\end{align*}
with $K \coloneqq \|A^{-1}\|_\infty^{N+1} \mathrm e^{\gamma(N+1)} \max\{1,\|A^{-1}\|_\infty^N \} \max\{1, \mathrm{e}^{-\gamma N}\}$.

Suppose now that there is \(K = K(\gamma)\) such that the estimate \eqref{eq:exp-est} holds.
Then for \(x_0\in L\setminus\{0\}\), it follows for \(n>m\) from inequality \eqref{eq:exp-est} that
\begin{equation*}
    \frac{1}{n-m}\ln\frac{\Vert x(n,x_0)\Vert}{\Vert x(m,x_0)\Vert}
    \leq
    \frac{K}{n-m} + \gamma.
\end{equation*}
Hence for all \(N\in\N\), it holds that
\begin{equation*}
    \sup_{n-m>N}\sup_{x_0\in L\setminus\{0\}}\frac{1}{n-m}\ln\frac{\Vert x(n,x_0)\Vert}{\Vert x(m,x_0)\Vert}
    \leq
    \frac{K}{N} + \gamma.
\end{equation*}
Letting \(N\) tend to infinity, acknowledging that all limits exist, it follows that
\begin{equation*}
    \overline\beta_A(L)
    =
    \lim_{N\to\infty}\sup_{n-m>N}\sup_{x_0\in L\setminus\{0\}}\frac{1}{n-m}\ln\frac{\Vert x(n,x_0)\Vert}{\Vert x(m,x_0)\Vert}
    \leq
    \lim_{N\to\infty}\Bigg(\frac{K}{N} + \gamma\Bigg) = \gamma.
\end{equation*}

(iv) This follows from \eqref{BohlOnL1}, \eqref{BohlOnL2}, using the fact that $x(\cdot, \alpha x_0) = \alpha x(\cdot, x_0)$.

(v) The case if \(x_0 = 0\) or \(x_1 = 0\) is clear.
Let \(x_0,x_1\neq 0\). From (iii) we obtain \(\lim\limits_{n\to\infty}x(n,x_0) = \lim\limits_{n\to\infty}x(n,x_1) = 0\).
Hence \(\lim\limits_{n\to\infty}x(n,x_0+x_1) = 0\).
If \(\underline\beta_A(x_0+x_1)>0\), then (iii) would imply that \(\lim\limits_{n\to\infty}x(n,x_0+x_1) = \infty\).
Hence \(\underline\beta_A(x_0+x_1)\leq 0\).

(vi) The case \(x_1 = 0\) is clear.
Let \(x_1\neq 0\).
Let \(\gamma > 0\) with \(\overline\beta_A(x_1)<-\gamma<0\).
By (iii), there is \(K>0\), such that
\begin{equation*}
    \Vert x(n,x_1)\Vert\leq K\mathrm e^{-\gamma(n-m)}\Vert x(m,x_1)\Vert,\quad n>m.
\end{equation*}
Also by (iii), for \(\widehat\gamma\in\R\) with \(0<\widehat\gamma<\underline\beta_A(x_0)\) there is \(\widehat K>0\) with
\begin{equation*}
    \Vert x(n,x_0)\Vert\geq \widehat K\mathrm e^{\widehat\gamma(n-m)}\Vert x(m,x_0)\Vert,\quad n>m.
\end{equation*}
Note that from the previous inequalities it follows that \(\Vert x(n,x_0)\Vert\) resp.\ \(\Vert x(n,x_1)\Vert\) tends to infinity resp.\ zero.
In particular, there is \(N\in\N\), such that
\begin{align*}
    &\widehat K\Vert x(n,x_0)\Vert - K\Vert x(n,x_1)\Vert\geq\frac {\widehat K}2\Vert x(n,x_0)\Vert,
    &n>N,
    \\
    &\Vert x(n,x_0+x_1)\Vert
    \leq \Vert x(n,x_0)\Vert+\Vert x(n,x_1)\Vert
    \leq 2\Vert x(n,x_0)\Vert,
    &n>N.
\end{align*}
We compute for \(m,n\in\N\) with \(m>N\)
\begin{align*}
    \Vert x(n,x_0+x_1)\Vert
    &\geq
    \Vert x(n,x_0)\Vert - \Vert x(n,x_1)\Vert
    \\
    &\geq
    \widehat K\mathrm e^{\widehat\gamma(n-m)}\Vert x(m,x_0)\Vert - K\mathrm e^{-\gamma(n-m)}\Vert x(m,x_1)\Vert
    \\
    &\geq
    \mathrm e^{\widehat\gamma(n-m)}\Big(\widehat K\Vert x(m,x_0)\Vert-K\Vert x(m,x_1)\Vert\Big)
    \\
    &\geq
    \mathrm e^{\widehat\gamma(n-m)}\frac {\widehat K}2\Vert x(m,x_0)\Vert
    \\
    &\geq
    \frac {\widehat K}4\mathrm e^{\widehat\gamma(n-m)}\Vert x(m,x_0+x_1)\Vert.
\end{align*}
Rearranging the inequality and letting \(n-m\) and \(m\) tend to infinity using Lemma \ref{lem:repr-Bohl-exp}, yields \(\underline\beta_A(x_0+x_1)\geq\widehat\gamma\).
The fact that \(\widehat\gamma\in\big(0,\underline\beta_A(x_0)\big)\) was chosen arbitrarily, yields \(\underline\beta_A(x_0+x_1)\geq \underline\beta_A(x_0)\).
\end{proof}

The point of view of Bohl exponents as $\limsup$ and $\liminf$ of the net
\begin{equation*}
   \lambda(n,m) = \frac{1}{n-m} \ln \frac{\| x(n, x_0) \|}{\| x(m, x_0) \|},
   \qquad
   (n,m) \in D
\end{equation*}
on the directed set $(D,\leq)$ is also useful in reinterpreting the following lemma as the statement that every element of a Bohl interval $[\underline{\beta}_{A}(x_0), \overline{\beta}_{A}(x_0)]$ is an accumulation point of the net and can be realized as a limit of a subnet.

\begin{lemma}[Bohl interval as limits of subsequences]\label{BohlInterval-limits}
Let $x_0 \in \mathbb{R}^d \setminus \{0\}$.
Each element in $[\underline{\beta}_{A}(x_0), \overline{\beta}_{A}(x_0)]$ can be realized as a limit, more precisely,
\begin{align*}
   [\underline{\beta}_{A}(x_0), \overline{\beta}_{A}(x_0)]
   &=
   \begin{Bmatrix}
      \lambda \in \mathbb{R} :
      \text{there exist $(n_k)_{k \in \mathbb{N}}, (m_k)_{k \in \mathbb{N}}$ in $\mathbb{N}$ with}
   \\
      \text{$n_k - m_k \to \infty$ and}
   \\
      \text{$\lambda = \lim_{k \to \infty} \tfrac{1}{n_k-m_k} \ln \tfrac{\| x(n_k, x_0) \|}{\| x(m_k, x_0) \|}$}
   \end{Bmatrix}
\\[0.5ex]
   &=
   \begin{Bmatrix}
      \lambda \in \mathbb{R} :
      \text{there exist $(n_k)_{k \in \mathbb{N}}, (m_k)_{k \in \mathbb{N}}$ in $\mathbb{N}$ with}
   \\
      \text{$n_k - m_k \to \infty, m_k \to \infty$ and}
   \\
      \text{$\lambda = \lim_{k \to \infty} \tfrac{1}{n_k-m_k} \ln \tfrac{\| x(n_k, x_0) \|}{\| x(m_k, x_0) \|}$}
   \end{Bmatrix}.
\end{align*}
\end{lemma}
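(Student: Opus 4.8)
The plan is to prove the statement by establishing two set equalities. Denote the three sets in the statement by $I \coloneqq [\underline\beta_A(x_0), \overline\beta_A(x_0)]$, by $S_1$ the set of limits with $n_k - m_k \to \infty$, and by $S_2$ the set of limits with $n_k - m_k \to \infty$ and $m_k \to \infty$. Since every sequence witnessing membership in $S_2$ also witnesses membership in $S_1$, we trivially have $S_2 \subseteq S_1$. The whole argument then reduces to showing $S_1 \subseteq I$ and $I \subseteq S_2$; combined with $S_2 \subseteq S_1$ this forces $I = S_1 = S_2$.

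For $S_1 \subseteq I$, I would take $\lambda = \lim_k \tfrac{1}{n_k - m_k} \ln \tfrac{\|x(n_k,x_0)\|}{\|x(m_k,x_0)\|}$ with $n_k - m_k \to \infty$. For any fixed $N$, all but finitely many indices satisfy $n_k - m_k > N$, so $\tfrac{1}{n_k - m_k} \ln \tfrac{\|x(n_k,x_0)\|}{\|x(m_k,x_0)\|}$ lies between $\inf_{n-m>N} \lambda(n,m)$ and $\sup_{n-m>N}\lambda(n,m)$ for $k$ large; passing to the limit in $k$ and then letting $N \to \infty$ gives $\underline\beta_A(x_0) \le \lambda \le \overline\beta_A(x_0)$, using the single-vector definitions \eqref{eq:bohl-recalled1}, \eqref{eq:bohl-recalled2} and the fact that the sup/inf over $N$ can be taken as a limit.

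The substantive direction is $I \subseteq S_2$: given $\lambda \in I$, I must construct sequences $(n_k), (m_k)$ with $n_k - m_k \to \infty$, $m_k \to \infty$, and $\tfrac{1}{n_k-m_k}\ln\tfrac{\|x(n_k,x_0)\|}{\|x(m_k,x_0)\|} \to \lambda$. By Lemma \ref{lem:repr-Bohl-exp} applied to the one-dimensional subspace $L = \operatorname{span}\{x_0\}$ (together with Lemma \ref{lem:bohlproperties}(iv), which identifies $\beta_A(x_0)$ with $\beta_A(L)$), the endpoints $\underline\beta_A(x_0)$ and $\overline\beta_A(x_0)$ are themselves realized as $\liminf$ and $\limsup$ of the net $\lambda(n,m)$ along the directed set where \emph{both} $n - m \to \infty$ and $m \to \infty$; hence there exist sequences of the required type producing the two endpoints. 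For an interior value $\lambda \in (\underline\beta_A(x_0), \overline\beta_A(x_0))$, I would use a discrete intermediate-value argument: starting from pairs $(n', m')$ and $(n'', m'')$ (with $m', m''$ large and $n'-m', n''-m''$ large) whose ratios straddle $\lambda$, interpolate by moving one endpoint at a time. The crucial quantitative input is that $A$ is a Lyapunov sequence, so by Remark \ref{rem:PhiBounds} a unit change in $n$ or $m$ changes $\ln\tfrac{\|x(n,x_0)\|}{\|x(m,x_0)\|}$ by at most $\ln C$ where $C = \max\{\|A\|_\infty, \|A^{-1}\|_\infty\}$; dividing by $n - m$, which is large, the increments of $\tfrac{1}{n-m}\ln\tfrac{\|x(n,x_0)\|}{\|x(m,x_0)\|}$ become arbitrarily small, so we can pass through any target value in the straddled range up to error $\to 0$. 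Arranging this along a sequence of scales with $m_k \to \infty$ and extracting a diagonal subsequence yields the desired witnessing sequences, so $\lambda \in S_2$.

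The main obstacle is the interior-value construction in $I \subseteq S_2$: making the intermediate-value step rigorous requires care in choosing the two straddling pairs at a common large scale (so that the "step size" $\ln C / (n-m)$ is uniformly small) and in verifying that the interpolation path stays within a window whose endpoints are close to $\lambda$. Once the Lyapunov bound from Remark \ref{rem:PhiBounds} is in hand this is a routine $\varepsilon$--$N$ bookkeeping argument, and the reinterpretation of $\underline\beta_A$, $\overline\beta_A$ as net-theoretic $\liminf$/$\limsup$ via Lemma \ref{lem:repr-Bohl-exp} supplies both the endpoints and the starting pairs for the interpolation.
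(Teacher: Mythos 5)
Your decomposition of the claim into $S_2 \subseteq S_1 \subseteq I \subseteq S_2$ coincides with the paper's, and your treatment of $S_1 \subseteq I$ is the same. For the substantive direction $I \subseteq S_2$, however, your route is genuinely different in its execution from the paper's, even though both rest on the same core observation (a unit change in $n$ or $m$ changes $\lambda(n,m)$ by $O(1/(n-m))$, courtesy of $A$ being a Lyapunov sequence). The paper constructs, for each $\ell$, explicitly defined minimal indices $m_\ell, n_\ell, \widetilde m_\ell, \widetilde n_\ell$ (minimal $m$, then minimal $n > m$, with $\lambda(n,m)$ below resp.\ above the target $\lambda$), shows that not all four can simultaneously take their trivial values $\ell, 2\ell$, and then uses a pigeonhole over four cases to produce adjacent pairs $(n_{\ell_k}, m_{\ell_k})$ and $(n_{\ell_k}-1, m_{\ell_k})$ straddling $\lambda$; the Lyapunov bound then forces both values towards $\lambda$. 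Your version replaces the minimality construction and case split with a direct discrete intermediate-value argument along an interpolation path; this is cleaner conceptually and, once spelled out, arguably easier to check (and sidesteps the slightly delicate bookkeeping in the paper's cases about whether $(n_{\ell_k}-1)-m_{\ell_k}\ge\ell_k$).

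Two points you should be more careful about when fleshing this out. First, the increment of $\lambda(n,m)$ under a unit change in $n$ (or $m$) is not simply $\ln C/(n-m)$: writing $\lambda(n,m) = L_n/(n-m)$ with $L_n = \ln(\|x(n,x_0)\|/\|x(m,x_0)\|)$, the step also picks up the change in the prefactor, so
\[
\lambda(n+1,m)-\lambda(n,m) = -\frac{\lambda(n,m)}{n+1-m} + \frac{1}{n+1-m}\ln\frac{\|x(n+1,x_0)\|}{\|x(n,x_0)\|},
\]
which is $O(1/(n-m))$ because $\lambda(n,m)$ is bounded by Lemma \ref{lem:bohlproperties}(i); your estimate gives the right order but not the right constant, and one should say so. Second, you must specify an interpolation path from $(n',m')$ to $(n'',m'')$ that never leaves the region $\{(n,m): n-m>N,\ m>N\}$: moving $n$ alone may shrink the gap below $N$. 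A path that works is to first move diagonally (increment or decrement $n$ and $m$ together, preserving the gap $n'-m'$) until the second coordinate reaches $m''$, and then move $n$ alone from $m''+(n'-m')$ to $n''$; along the diagonal leg the gap stays $n'-m'>N$ and $m \ge \min\{m',m''\}>N$, and along the second leg the gap stays $\ge \min\{n'-m',n''-m''\}>N$. The diagonal step size is
\[
\lambda(n+1,m+1)-\lambda(n,m)=\frac{1}{n-m}\Big(\ln\tfrac{\|x(n+1,x_0)\|}{\|x(n,x_0)\|}-\ln\tfrac{\|x(m+1,x_0)\|}{\|x(m,x_0)\|}\Big),
\]
again $O(1/(n-m))$. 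With this path the discrete IVT produces an adjacent pair straddling $\lambda$ with error $O(1/N)$, and letting $N\to\infty$ yields the desired sequences in $M_3$, so your strategy does close the argument.
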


\begin{proof}
Let $x_0 \in \mathbb{R}^d \setminus \{0\}$.
For \(n,m\in\N\) with $n > m$ we set
\begin{equation*}
   \lambda(n,m) \coloneqq \frac{1}{n-m} \ln \frac{\| x(n, x_0) \|}{\| x(m, x_0) \|},
\end{equation*}
denote the second and third set in the equality of Lemma \ref{BohlInterval-limits} by
\begin{align*}
   M_2
   &\coloneqq
   \begin{Bmatrix}
      \lambda \in \mathbb{R} :
      \text{there exist $(n_k)_{k \in \mathbb{N}}, (m_k)_{k \in \mathbb{N}}$ in $\mathbb{N}$ with}
   \\
      \text{$n_k - m_k \to \infty$ and}
   \\
      \text{$\lambda = \lim_{k \to \infty} \lambda(n_k, m_k)$}
   \end{Bmatrix},
\\[0.5ex]
   M_3
   &\coloneqq
   \begin{Bmatrix}
      \lambda \in \mathbb{R} :
      \text{there exist $(n_k)_{k \in \mathbb{N}}, (m_k)_{k \in \mathbb{N}}$ in $\mathbb{N}$ with}
   \\
      \text{$n_k - m_k \to \infty, m_k \to \infty$ and}
   \\
      \text{$\lambda = \lim_{k \to \infty} \lambda(n_k, m_k)$}
   \end{Bmatrix},
\end{align*}
and show that $M_3 \subseteq M_2 \subseteq [\underline{\beta}_{A}(x_0), \overline{\beta}_{A}(x_0)] \subseteq M_3$.

The first inclusion $M_3 \subseteq M_2$ is obvious.

To show that $M_2\subseteq [\underline{\beta}_{A}(x_0), \overline{\beta}_{A}(x_0)]$,
let $\lambda \in M_2$ and $(n_k)_{k \in \mathbb{N}}$, $(m_k)_{k \in \mathbb{N}}$ be sequences in $\mathbb{N}$ with
\begin{equation*}
    n_k>m_k,
    \quad
    \lim\limits_{k\to\infty}(n_k-m_k) = \infty
    \quad
    \text{and}
    \quad
    \lambda = \lim_{k \to \infty}\lambda(n_k,m_k).
\end{equation*}
For $N\in\N$ let $k_N\in\N$ be such that $n_{k_N} - m_{k_N} > N$.
Then
\begin{align*}
    \underline \beta_A(x_0)
    &= \lim_{N\to\infty}\inf_{n-m>N}\lambda(n,m)
    \\
    &\leq \lim_{N\to\infty}\lambda(n_{k_N},m_{k_N})
    \\
    &\leq \lim_{N\to\infty}\sup_{n-m>N}\lambda(n,m)
    = \overline \beta_A(x_0).
\end{align*}
Since $\lim_{N\to\infty}\lambda(n_{k_N},m_{k_N}) = \lambda$, it follows that $\underline \beta_A(x_0)\leq\lambda\leq\overline \beta_A(x_0)$.

To show the inclusion $[\underline{\beta}_{A}(x_0), \overline{\beta}_{A}(x_0)] \subseteq M_3$, let $\lambda \in \big[\underline{\beta}_{A}(x_0), \overline{\beta}_{A}(x_0)\big]$.
The case \(\lambda = \underline{\beta}_{A}(x_0)\) resp.\ \(\lambda = \overline{\beta}_{A}(x_0)\) is clear.
Let \(\lambda\in\big(\underline{\beta}_{A}(x_0), \overline{\beta}_{A}(x_0)\big)\).
Using the representation
\begin{equation*}
   \underline{\beta}_{A}(x_0)
   =
   \lim_{N \to \infty}
   \inf_{\substack{n - m > N \\ m > N}}
   \lambda(n,m),
\end{equation*}
and the fact that $\lambda > \underline{\beta}_{A}(x_0)$, the sequences $(m_\ell)_{\ell \in \mathbb{N}}$, $(n_\ell)_{\ell \in \mathbb{N}}$,
\begin{align*}
   m_\ell
   &\coloneqq
   \min \big\{
   q \in \mathbb{N}
   \;|\;
   \exists p \in \mathbb{N} : (\text{$p - q \geq \ell$ and $q \geq \ell$ and $\lambda(p,q) < \lambda$})
   \big\},
\\
   n_\ell
   &\coloneqq
   \min \big\{
   p \in \mathbb{N}
   \;|\;
   \text{$p - m_\ell \geq \ell$ and $\lambda(p,m_\ell) < \lambda$}
   \big\},
\end{align*}
are well-defined.
Similarly the sequences $(\widetilde{m}_\ell)_{\ell \in \mathbb{N}}$, $(\widetilde{n}_\ell)_{\ell \in \mathbb{N}}$,
\begin{align*}
   \widetilde{m}_\ell
   &\coloneqq
   \min \big\{
   q \in \mathbb{N}
   \;|\;
   \exists p \in \mathbb{N} : (\text{$p - q \geq \ell$ and $q \geq \ell$ and $\lambda(p,q) > \lambda$})
   \big\},
\\
   \widetilde{n}_\ell
   &\coloneqq
   \min \big\{
   p \in \mathbb{N}
   \;|\;
   \text{$p - \widetilde m_\ell \geq \ell$ and $\lambda(p,\widetilde m_\ell) > \lambda$}
   \big\},
\end{align*}
are well-defined.
It holds that
\begin{equation*}
   m_\ell, \widetilde{m}_\ell \geq \ell
   \quad \text{and} \quad
   n_\ell - m_\ell, \widetilde{n}_\ell - \widetilde{m}_\ell \geq \ell,
   \qquad
   \ell \in \mathbb{N}.
\end{equation*}
We show that there exists no $q \in \mathbb{N}$ such that
\begin{equation*}
   m_\ell = \widetilde{m}_\ell = \ell
   \quad \text{and} \quad
   n_\ell - m_\ell = \widetilde{n}_\ell - \widetilde{m}_\ell = \ell,
   \qquad
   \ell \geq q,
\end{equation*}
or, equivalently, $m_\ell = \widetilde{m}_\ell = \ell$ and $n_\ell = \widetilde{n}_\ell = 2\ell$ for $\ell \geq q$.
Assume to the contrary that there exists such a $q \in \mathbb{N}$.
Then
\begin{equation*}
   \lambda(n_\ell, m_\ell) < \lambda
   \quad \text{and} \quad
   \lambda(\widetilde{n}_\ell, \widetilde{m}_\ell) > \lambda,
   \qquad
   \ell \geq q,
\end{equation*}
which is a contradiction because $n_\ell = \widetilde{n}_\ell$ and $m_\ell = \widetilde{m}_\ell$.
As a consequence, there are four cases to consider:

(i) There exists a subsequence $(n_{\ell_k})_{k \in \mathbb{N}}$ of $(n_\ell)_{\ell \in \mathbb{N}}$ with $n_{\ell_k} - m_{\ell_k} > \ell_k$, $k \in \mathbb{N}$.

(ii) There exists a subsequence $(\widetilde n_{\ell_k})_{k \in \mathbb{N}}$ of $(\widetilde n_\ell)_{\ell \in \mathbb{N}}$ with $\widetilde n_{\ell_k} - \widetilde m_{\ell_k} > \ell_k$, $k \in \mathbb{N}$.

(iii) There exists a subsequence $(m_{\ell_k})_{k \in \mathbb{N}}$ of $(m_\ell)_{\ell \in \mathbb{N}}$ with $m_{\ell_k} > \ell_k$, $k \in \mathbb{N}$.

 (iv) There exists a subsequence $(\widetilde m_{\ell_k})_{k \in \mathbb{N}}$ of $(\widetilde m_\ell)_{\ell \in \mathbb{N}}$ with $\widetilde m_{\ell_k} > \ell_k$, $k \in \mathbb{N}$.

We elaborate the details for case (i), the other cases can be discussed in a similar way.
Since $\lambda(n_{\ell_k}, m_{\ell_k}) < \lambda$, it follows that
\begin{equation*}
   \limsup_{k \to \infty}
   \lambda(n_{\ell_k}, m_{\ell_k}) \leq \lambda.
\end{equation*}
We now show
\begin{equation*}
   \liminf_{k \to \infty} \lambda(n_{\ell_k}, m_{\ell_k})
   \geq
   \lambda,
\end{equation*}
proving that $\lim_{k \to \infty} \lambda(n_{\ell_k}, m_{\ell_k}) = \lambda$. To this end, we use the definition of $n_{\ell_k}$ together with the fact that $(n_{\ell_k} - 1) - m_{\ell_k} \geq \ell_k$, $k \in \mathbb{N}$, to conclude that
\begin{equation*}
   \lambda(n_{\ell_k} - 1, m_{\ell_k}) \geq \lambda.
\end{equation*}
Then
\begin{align*}
   &
   \liminf_{k \to \infty} \lambda(n_{\ell_k}, m_{\ell_k}) = {}
\\
   &=
   \liminf_{k \to \infty} \frac{1}{n_{\ell_k} - m_{\ell_k}}
   \ln \frac{\| x(n_{\ell_k}, x_0) \|}{\| x(m_{\ell_k}, x_0) \|}
\\
   &=
   \liminf_{k \to \infty} \frac{1}{n_{\ell_k} - m_{\ell_k}}
   \ln \frac{\|A(n_{\ell_k} - 1)^{-1}\| \cdot \| x(n_{\ell_k}, x_0) \|}
   {\|A(n_{\ell_k} - 1)^{-1}\| \cdot \| x(m_{\ell_k}, x_0) \|}
\\
   &\geq
   \liminf_{k \to \infty} \frac{1}{n_{\ell_k} - m_{\ell_k}}
   \ln \frac{\| x(n_{\ell_k} - 1, x_0) \|}
   {\Vert A^{-1}\Vert_\infty \cdot \| x(m_{\ell_k}, x_0) \|}
\\
   &=
   \liminf_{k \to \infty} \frac{n_{\ell_k} - 1 - m_{\ell_k}}{n_{\ell_k} - m_{\ell_k}}
   \frac{1}{n_{\ell_k} -1 - m_{\ell_k}}
   \ln \frac{\| x(n_{\ell_k} - 1, x_0) \|}
   {\Vert A^{-1}\Vert_\infty \cdot \| x(m_{\ell_k}, x_0) \|}
\\
   &=
   \liminf_{k \to \infty}
   \frac{1}{n_{\ell_k} -1 - m_{\ell_k}}
   \ln \frac{\| x(n_{\ell_k} - 1, x_0) \|}{\| x(m_{\ell_k}, x_0) \|}
\\
   &=
   \liminf_{k \to \infty} \lambda(n_{\ell_k} - 1, m_{\ell_k})
   \geq
   \lambda.
\end{align*}
We denote the subsequences $(n_{\ell_k})_{k\in\N}$, $(m_{\ell_k})_{k\in\N}$ again by $(n_k)_{k\in\N}$, $(m_k)_{k\in\N}$, respectively, to conclude the proof.
\end{proof}

\begin{remark}[Bohl exponents in the literature]\label{rem:bohl-history}
Upper Bohl exponents for solutions and sets of solutions of linear differential equations where introduced by Bohl in his paper \cite{Bohl1913}. In the context of linear difference equations the concept and name of Bohl exponents appeared later, e.g.\ in \cite{Gohberg1991}, where the
quantities
\[
\kappa _{+}\left( L\right) =\underset{j\rightarrow \infty }{\lim }\left(
\underset{n\in \mathbb{N}\text{, }x\in L\backslash \left\{ 0\right\} \text{ }%
}{\sup }\left( \frac{\left\Vert \Phi _{A}\left( n+j,0\right) x\right\Vert }{%
\left\Vert \Phi _{A}\left( n,0\right) x\right\Vert }\right) ^{1/j}\right)
\]%
and%
\[
\kappa _{-}\left( L\right) =\underset{j\rightarrow \infty }{\lim }\left(
\underset{n\in \mathbb{N}\text{, }x\in L\backslash \left\{ 0\right\} \text{ }%
}{\inf }\left( \frac{\left\Vert \Phi _{A}\left( n+j,0\right) x\right\Vert }{%
\left\Vert \Phi _{A}\left( n,0\right) x\right\Vert }\right) ^{1/j}\right) ,
\]%
are introduced for a subspace $L$ of $\mathbb{R}^{d}$ (see also the review \cite{Barabanov2001}). They are related to the upper and lower Bohl exponents \eqref{BohlOnL1}, \eqref{BohlOnL2} by
\[
\ln \kappa _{+}\left( L\right) =\overline{\beta }_{A}\left(L\right)
\text{ and }\ln \kappa _{-}\left( L\right) =\underline{\beta }_{A}\left(L\right).
\]%
For $L = \mathbb{R}^d$ sometimes (see e.g.\ \cite{HinrichsenPritchard2005} and the references therein) a different notation is used for the Bohl exponents $\overline{\beta }_{A}\left(L\right)$ and $\underline{\beta }_{A}\left(L\right)$, respectively.
\begin{equation*}
   \Omega (A)
   =
   \overline{\beta }_{A}\left(\mathbb{R}^d\right)
   \qquad\text{and}\qquad
   \omega (A)
   =
   \underline{\beta }_{A}\left(\mathbb{R}^d\right)
\end{equation*}
are called e.g.\ general exponents \cite{Barabanov2001} or singular exponents \cite{Izobov2012}.
\end{remark}

\section{Bohl spectrum}

We define a notion of spectrum of \eqref{1} based on Bohl intervals formed by Bohl exponents and prove a spectral theorem.
This is a discrete analogue of the Bohl spectrum defined in \cite{Doan2017}.

\begin{definition}[Bohl spectrum]\label{def:Bohl-spectrum}
\label{B}The Bohl spectrum of \eqref{1} is defined as
\begin{equation*}
   \Sigma _{\mathrm{B}}(A)
   \coloneqq
   \bigcup_{x_0 \in \mathbb{R}^d \setminus \{0\}}
   [\underline{\beta}_A(x_0), \overline{\beta}_A(x_0)].
\end{equation*}
Its complement $\varrho_{\mathrm{B}}(A) \coloneqq \mathbb{R} \setminus \Sigma _{\mathrm{B}}(A)$ is called the resolvent of \eqref{1}.
\end{definition}

\begin{remark}[Bohl spectrum is bounded]\label{rem:bohl-spectrum-bounded}
By Lemma \ref{lem:bohlproperties}(i),
\begin{equation*}
    - \ln \|A^{-1}\|_{\infty}
    \leq
    \underline{\beta}_A(x_0) \leq \overline{\beta}_A(x_0) \leq \ln \|A\|_{\infty}
\end{equation*}
for \(x_0\in\mathbb R^d\setminus\{0\}\). Hence
$\Sigma _{\mathrm{B}}(A) \subseteq \big[-\ln \|A^{-1}\|_{\infty}, \ln \|A\|_{\infty}\big]$.
\end{remark}

To prepare the formulation and proof of the Bohl spectral theorem, we introduce a $\gamma$-dependent set of initial conditions with upper Bohl exponent less than $\gamma$. Conceptually this set corresponds to the sum of the generalized eigenspaces of a constant matrix $A$ which correspond to eigenvalues with a modulus less than $\gamma$. In the nonautonomous case \eqref{1} such a set needs to be characterized dynamically by prescribing the growth rates of solutions with initial values in that set.

\begin{definition}[$\gamma$-exponentially stable set $M_\gamma$]\label{def:ExpStableSet}
For $\gamma \in \mathbb{R}$ the set
 \begin{equation*}
   M_{\gamma} \coloneqq \{x_0 \in \mathbb{R}^d \setminus\{0\} :
   \overline{\beta}_A(x_0) < \gamma
   \}
   \cup \{0\}
\end{equation*}
is called \emph{$\gamma$-exponentially stable set} of \eqref{1}.
\end{definition}

That $M_\gamma$ turns out to be a subspace for $\gamma \in \varrho_{\mathrm{B}}(A)$ and other important properties of $M_\gamma$, is the content of the following lemma:

\begin{lemma}[Properties of $M_\gamma$]\label{lem:gamma-prop}\hfill

(i) (Monotonicity) $M_\gamma$ is monotone
\begin{equation*}
   M_{\gamma_1} \subseteq M_{\gamma_2},
   \qquad
   \gamma_1 \leq \gamma_2,
\end{equation*}
and eventually constant
\begin{equation}\label{eq:Mgamma}
   M_\gamma
   =
   \begin{cases}
      \{0\}, & \text{ for }\gamma \in (-\infty, -\ln \|A^{-1}\|_{\infty}),
   \\
      \mathbb{R}^d, & \text{ for } \gamma \in (\ln \|A\|_{\infty}, \infty).
   \end{cases}
\end{equation}

(ii) ($M_\gamma$ is a subspace on resolvent intervals)
\begin{equation*}
   \gamma \in \varrho_{\mathrm{B}}(A)
   \quad\Rightarrow\quad
   M_{\gamma} \text{ is a linear subspace of } \mathbb{R}^{d}.
\end{equation*}

(iii) ($M_\gamma$ is constant on resolvent intervals) Let $\gamma_1, \gamma_2 \in \varrho_{\mathrm{B}}(A)$ with $\gamma_1 < \gamma_2$. Then exactly one of the following two alternatives holds and the statements in each alternative are equivalent:

\begin{tabular}{ll}
   \hspace*{7ex}Alternative I\hspace*{9ex} & \hspace*{12ex}Alternative II
\\
   \hspace*{3ex}(A) $[\gamma_1, \gamma_2] \subseteq \varrho_{\mathrm{B}}(A)$.
   &
   (A') There exists $\zeta \in (\gamma_1, \gamma_2) \cap \Sigma_{\mathrm{B}}(A)$.
\\
   \hspace*{3ex}(B) $M_{\gamma_1} = M_{\gamma_2}$.
   &
   (B') $\dim M_{\gamma_1} < \dim M_{\gamma_2}$.
\end{tabular}
\end{lemma}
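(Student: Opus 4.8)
The plan is to treat the three parts in order, using (i) and (ii) as preparation for (iii), which carries the real content. For part (i), monotonicity is immediate from Definition~\ref{def:ExpStableSet}, since $\overline{\beta}_A(x_0)<\gamma_1\le\gamma_2$ forces $\overline{\beta}_A(x_0)<\gamma_2$. For the eventual constancy~\eqref{eq:Mgamma} I would simply insert the a priori bounds $-\ln\|A^{-1}\|_\infty\le\underline{\beta}_A(x_0)\le\overline{\beta}_A(x_0)\le\ln\|A\|_\infty$ from Lemma~\ref{lem:bohlproperties}(i): when $\gamma>\ln\|A\|_\infty$ every nonzero $x_0$ has $\overline{\beta}_A(x_0)<\gamma$, so $M_\gamma=\R^d$; when $\gamma<-\ln\|A^{-1}\|_\infty$ every nonzero $x_0$ has $\overline{\beta}_A(x_0)\ge-\ln\|A^{-1}\|_\infty>\gamma$, so $M_\gamma=\{0\}$.

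For part (ii), closedness of $M_\gamma$ under scalar multiplication is Lemma~\ref{lem:bohlproperties}(iv). For additivity I would argue by contradiction: take $x_0,x_1\in M_\gamma$ with $y:=x_0+x_1\neq0$ and suppose $\overline{\beta}_A(y)\ge\gamma$. Since $\gamma\in\varrho_{\mathrm{B}}(A)$, the Bohl interval $[\underline{\beta}_A(y),\overline{\beta}_A(y)]\subseteq\Sigma_{\mathrm{B}}(A)$ cannot contain $\gamma$, so $\underline{\beta}_A(y)>\gamma$. Passing to the shifted Lyapunov sequence $e^{-\gamma}A$, whose transition matrix is $e^{-\gamma(n-m)}\Phi_A(n,m)$ and whose Bohl exponents are therefore those of $A$ shifted by $-\gamma$, one obtains $\overline{\beta}_{e^{-\gamma}A}(x_0),\overline{\beta}_{e^{-\gamma}A}(x_1)<0<\underline{\beta}_{e^{-\gamma}A}(y)$, contradicting Lemma~\ref{lem:bohlproperties}(v). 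Hence $y\in M_\gamma$. This is the only step of the lemma that is not pure bookkeeping, and I expect it to be the main (though still short) obstacle, since it relies essentially on the additivity behaviour of Bohl exponents encoded in Lemma~\ref{lem:bohlproperties}(v).

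For part (iii), the key observation is that under the standing hypothesis $\gamma_1,\gamma_2\in\varrho_{\mathrm{B}}(A)$ the two statements in Alternative~I are exactly the negations of those in Alternative~II. Indeed, $[\gamma_1,\gamma_2]\subseteq\varrho_{\mathrm{B}}(A)$ is equivalent to $(\gamma_1,\gamma_2)\cap\Sigma_{\mathrm{B}}(A)=\emptyset$ (the endpoints already lying in the resolvent), whose negation is (A'); and by parts (i) and (ii) the sets $M_{\gamma_1}\subseteq M_{\gamma_2}$ are linear subspaces, so $M_{\gamma_1}=M_{\gamma_2}$ is the negation of $\dim M_{\gamma_1}<\dim M_{\gamma_2}$. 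Consequently it suffices to prove the two implications (A)$\Rightarrow$(B) and (A')$\Rightarrow$(B'): the within-alternative equivalences are then their contrapositives, and "exactly one alternative holds" follows at once because exactly one of (A), (A') is true.

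It then remains to establish the two implications, both of which are short. For (A)$\Rightarrow$(B): since always $M_{\gamma_1}\subseteq M_{\gamma_2}$, take $x_0\in M_{\gamma_2}\setminus\{0\}$; the finite number $\overline{\beta}_A(x_0)$ is an endpoint of $[\underline{\beta}_A(x_0),\overline{\beta}_A(x_0)]\subseteq\Sigma_{\mathrm{B}}(A)$ and satisfies $\overline{\beta}_A(x_0)<\gamma_2$, so, as $[\gamma_1,\gamma_2]$ avoids $\Sigma_{\mathrm{B}}(A)$, necessarily $\overline{\beta}_A(x_0)<\gamma_1$, i.e. $x_0\in M_{\gamma_1}$; hence (B). For (A')$\Rightarrow$(B'): pick $\zeta\in(\gamma_1,\gamma_2)\cap\Sigma_{\mathrm{B}}(A)$ and $v\neq0$ with $\zeta\in[\underline{\beta}_A(v),\overline{\beta}_A(v)]$; this interval is contained in $\Sigma_{\mathrm{B}}(A)$, contains the point $\zeta\in(\gamma_1,\gamma_2)$, and avoids the resolvent points $\gamma_1$ and $\gamma_2$, hence lies inside $(\gamma_1,\gamma_2)$, so $\gamma_1<\underline{\beta}_A(v)\le\overline{\beta}_A(v)<\gamma_2$; thus $v\in M_{\gamma_2}\setminus M_{\gamma_1}$, and since $M_{\gamma_1}\subseteq M_{\gamma_2}$ are subspaces the inclusion is proper, which is (B'). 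This completes the proof.
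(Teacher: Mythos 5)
Your proof is correct, and parts (i) and (iii) are in essence the paper's argument (the paper proves (A)$\Rightarrow$(B) and (B)$\Rightarrow$(A) directly, whereas you prove (A)$\Rightarrow$(B) and (A$'$)$\Rightarrow$(B$'$) with an explicit witness $v$ and recover the rest by negation — a cosmetic difference in organization). Where you genuinely diverge from the paper is part (ii): the paper invokes the monotonicity property of Bohl exponents on nested subspaces, Lemma~\ref{lem:bohlproperties}(ii), applied to $\operatorname{span}\{x_0\},\operatorname{span}\{x_0'\}\subseteq\operatorname{span}\{x_0,x_0'\}$, to place $\max\{\overline{\beta}_A(x_0),\overline{\beta}_A(x_0')\}$ inside the Bohl interval of the linear combination and then rule out the alternative $\gamma<\underline{\beta}_A(\alpha x_0+\beta x_0')$. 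You instead pass to the shifted system $e^{-\gamma}A$, whose Bohl exponents are those of $A$ shifted by $-\gamma$, and apply Lemma~\ref{lem:bohlproperties}(v) (lower Bohl exponent of a sum of exponentially decaying solutions is $\le 0$) to get a contradiction. Your route is cleaner and more self-contained — it reduces the subspace property of $M_\gamma$ to the explicit additivity fact in Lemma~\ref{lem:bohlproperties}(v), which is exactly what the set $M_\gamma$ demands — whereas the paper's route via Lemma~\ref{lem:bohlproperties}(ii) requires the extra step of tying the supremum over $\operatorname{span}\{x_0,x_0'\}$ back to the one-dimensional Bohl intervals. Both use $\gamma\in\varrho_{\mathrm{B}}(A)$ in the same way (to turn the dichotomy ``$\gamma<\underline{\beta}_A(y)$ or $\overline{\beta}_A(y)<\gamma$'' into information about $y=x_0+x_1$).
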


\begin{proof}
(i) Follows with Remark \ref{rem:bohl-spectrum-bounded}.

(ii) Let \(\gamma \in \varrho_\mathrm B(A)\) and \(x_0,x_0' \in M_\gamma\), \(\alpha \in \R\).
If \(\alpha x_0 \neq 0\) then \(\overline\beta_A(\alpha x_0) = \overline\beta(x_0) < \gamma\) by Lemma \ref{lem:bohlproperties}(iv).
Thus \(\alpha x_0 \in M_\gamma\).
We have to show that \(x_0 + x_0' \in M_\gamma\).
This is clear if \(x_0 = 0\), \(x_0' = 0\) or \(x_0 + x_0' = 0\) so we only consider the case \(x_0, x_0', x_0+x_0' \neq 0\).
We show that \(\underline\beta_A(x_0 + x_0') < \gamma\).
Then \(\overline\beta_A(x_0 + x_0') < \gamma\) follows as otherwise we would have the contradiction \(\gamma \in [\underline\beta(x_0 + x_0'),\overline\beta_A(x_0 + x_0')] \subseteq \Sigma_\mathrm B(A)\).
Recall the definition of Lyapunov exponent (see e.g.\ \cite[p.\ 3]{Barreira2017})
\begin{equation*}
        \lambda_A(y) \coloneqq \limsup_{n\to\infty}\frac 1n\ln\Vert x(n,y)\Vert,
        \qquad y \in \mathbb \R^d\setminus\{0\}.
\end{equation*}
From the definition of Bohl and Lyapunov exponent it follows for \(y \in \mathbb \R^d\setminus\{0\}\) that
\begin{equation*}
        \lambda_A(y) \leq \overline\beta_A(y),
        \qquad\text{and}\qquad
        \underline\beta_A(y) \leq \lambda_A(y).
\end{equation*}
From \cite[p.\ 3]{Barreira2017} it follows that
\begin{equation*}
        \lambda_A(x_0+x_0') \leq \max\{\lambda_A(x_0), \lambda_A(x_0')\}.
\end{equation*}
Using these facts we conclude
\begin{align*}
        \underline\beta_A(x_0+x_0')
        \leq \lambda_A(x_0+x_0')
        & \leq \max\{\lambda_A(x_0), \lambda_A(x_0')\}
        \\
        & \leq \max\{\overline\beta_A(x_0), \overline\beta_A(x_0')\}
        < \gamma.
\end{align*}

(iii) $(A) \Rightarrow (B)$. That $M_{\gamma_1} \subseteq M_{\gamma_2}$ holds by (i). Let $x_0 \in M_{\gamma_2} \setminus M_{\gamma_1}$, i.e.
\begin{equation*}
   \overline{\beta}_A(x_0) \geq \gamma_1
   \quad \text{and} \quad
   \overline{\beta}_A(x_0) < \gamma_2
\end{equation*}
and consequently
\begin{equation*}
   [\underline{\beta}_A(x_0), \overline{\beta}_A(x_0)]
   \cap
   [\gamma_1, \gamma_2]
   \neq
   \emptyset,
\end{equation*}
contradicting $[\gamma_1, \gamma_2] \subseteq \varrho_{\mathrm{B}}(A)$.

$(B) \Rightarrow (A)$. Since $\gamma_2 \in \varrho_{\mathrm{B}}(A)$,
\begin{equation*}
   M_{\gamma_2} \cup \{x_0 \in \mathbb{R}^d \setminus \{0\} : \gamma_2 < \underline{\beta}_A(x_0)\}
   =
   \mathbb{R}^d.
\end{equation*}
Using the assumption $M_{\gamma_1} = M_{\gamma_2}$, it follows that
\begin{equation*}
   \overline{\beta}_A(x_0) < \gamma_1
   \quad \text{or} \quad
    \gamma_2 < \underline{\beta}_A(x_0)
   \qquad
   \text{for each }
   x_0 \in \mathbb{R}^d \setminus \{0\}.
\end{equation*}
As a consequence $[\gamma_1, \gamma_2] \subseteq \varrho_{\mathrm{B}}(A)$.

$(A')\Leftrightarrow (B')$. Obviously $(A')$ is the opposite of $(A)$. Using (i) and (ii), it follows that $(B')$ is the opposite of $(B)$.
\end{proof}

We are now in a position to formulate and prove the main result of this section.

\begin{theorem}[Bohl Spectral Theorem]
\label{T3}
The Bohl spectrum $\Sigma_{\mathrm{B}}(A)$
of system (\ref{1}) is the nonempty disjoint union of at most $d$ bounded intervals
\begin{equation*}
   \Sigma _{\mathrm{B}}(A)
   =
   I_1 \cup \dots \cup I_{\ell},
\end{equation*}
where $\ell \in \{1, \dots, d\}$, $\sup I_i \leq \inf I_{i+1}$ and $[\sup I_i, \inf I_{i+1}] \cap \varrho_{\mathrm B}(A) \neq \emptyset$ for $i \in \{1, \dots, \ell-1\}$ .

Moreover, setting $I_0 = I_{\ell+1} \coloneqq \emptyset$ and $\inf\emptyset \coloneqq +\infty$ and $\sup\emptyset \coloneqq -\infty$, there exists a corresponding filtration
\begin{equation*}
   \{0\}
   =
   V_{0} \subsetneq V_{1} \subsetneq \cdots \subsetneq V_{\ell }
   =
   \mathbb{R}^{d}
\end{equation*}
defined for $i \in \{0, \dots, \ell\}$ and $\gamma \in [\sup I_i, \inf I_{i+1}] \cap \varrho_{\mathrm B}(A)$ by
\begin{equation*}
   V_i
   \coloneqq
   \{ x_0 \in \mathbb{R}^{d} \setminus \{0\}
   :
   \overline{\beta}_A(x_0)
   <
   \gamma
   \}
   \cup \{0\}.
\end{equation*}
The definition of $V_i$ does not depend on the choice of $\gamma \in [\sup I_i, \inf I_{i+1}] \cap \varrho_{\mathrm B}(A)$.
\end{theorem}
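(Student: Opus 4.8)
The plan is to read off the entire statement from the monotone integer-valued function $f\colon\varrho_{\mathrm{B}}(A)\to\{0,1,\dots,d\}$, $f(\gamma)\coloneqq\dim M_\gamma$, which is well defined by Lemma~\ref{lem:gamma-prop}(ii). First I would record boundedness and nonemptiness: $\Sigma_{\mathrm{B}}(A)\subseteq[-\ln\|A^{-1}\|_\infty,\ln\|A\|_\infty]$ by Remark~\ref{rem:bohl-spectrum-bounded}, and for any $x_0\neq 0$ the finite number $\overline\beta_A(x_0)$ lies in $\Sigma_{\mathrm{B}}(A)$ by Lemma~\ref{lem:bohlproperties}(i), so $\Sigma_{\mathrm{B}}(A)\neq\emptyset$. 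By Lemma~\ref{lem:gamma-prop}(i), $f$ is nondecreasing, identically $0$ on $(-\infty,-\ln\|A^{-1}\|_\infty)$ and identically $d$ on $(\ln\|A\|_\infty,\infty)$. The crucial input is Lemma~\ref{lem:gamma-prop}(iii): if $\gamma_1<\gamma_2$ lie in $\varrho_{\mathrm{B}}(A)$ with $f(\gamma_1)=f(\gamma_2)$, then Alternative~II is impossible, hence $[\gamma_1,\gamma_2]\subseteq\varrho_{\mathrm{B}}(A)$ and $M_{\gamma_1}=M_{\gamma_2}$. Therefore each level set $f^{-1}(v)$, for $v$ in the range of $f$, is an interval of $\mathbb{R}$ contained in $\varrho_{\mathrm{B}}(A)$ on which $\gamma\mapsto M_\gamma$ is a constant subspace.

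Next I would enumerate the range of $f$ as $\{d_0<d_1<\dots<d_\ell\}$; since $0$ and $d$ both occur and are the extreme possible values, $d_0=0$, $d_\ell=d$ and $1\le\ell\le d$. Put $R_j\coloneqq f^{-1}(d_j)$ (nonempty since $d_j$ lies in the range of $f$) and let $V_j\coloneqq M_\gamma$ for an arbitrary $\gamma\in R_j$, which is well defined by the previous paragraph. Monotonicity of $f$ shows that $R_0,\dots,R_\ell$ are intervals with $R_j<R_{j+1}$ elementwise, that they partition $\varrho_{\mathrm{B}}(A)$, and that $R_0$ is unbounded below while $R_\ell$ is unbounded above. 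Choosing $\gamma\in R_j$, $\gamma'\in R_{j+1}$ with $\gamma<\gamma'$, monotonicity of $\gamma\mapsto M_\gamma$ (Lemma~\ref{lem:gamma-prop}(i)) gives $V_j\subseteq V_{j+1}$, and $\dim V_j=d_j<d_{j+1}=\dim V_{j+1}$ forces the inclusion to be proper; moreover $\dim V_0=0$ and $\dim V_\ell=d$ give $V_0=\{0\}$ and $V_\ell=\mathbb{R}^d$.

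Then I would manufacture the spectral intervals as the gaps between consecutive $R_j$: for $j\in\{1,\dots,\ell\}$ set $I_j\coloneqq\Sigma_{\mathrm{B}}(A)\cap[\sup R_{j-1},\inf R_j]$, the set of reals lying strictly between $R_{j-1}$ and $R_j$; this is a bounded interval, bounded because $I_j\subseteq\Sigma_{\mathrm{B}}(A)$. Since the $R_j$ partition $\varrho_{\mathrm{B}}(A)$ and the outer two are unbounded, every point of $\Sigma_{\mathrm{B}}(A)$ falls into exactly one such gap, so $\Sigma_{\mathrm{B}}(A)=I_1\cup\dots\cup I_\ell$ is a disjoint union. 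To see $I_j\neq\emptyset$, take $\gamma_1\in R_{j-1}$, $\gamma_2\in R_j$; as $f(\gamma_1)=d_{j-1}<d_j=f(\gamma_2)$, Lemma~\ref{lem:gamma-prop}(iii) puts us in Alternative~II, producing $\zeta\in(\gamma_1,\gamma_2)\cap\Sigma_{\mathrm{B}}(A)$, and $\zeta\notin\varrho_{\mathrm{B}}(A)$ forces $\zeta\in I_j$. From the chain $\sup I_i\le\inf R_i\le\sup R_i\le\inf I_{i+1}$ one reads off $\sup I_i\le\inf I_{i+1}$, and $R_i\subseteq[\sup I_i,\inf I_{i+1}]$ together with $\emptyset\neq R_i\subseteq\varrho_{\mathrm{B}}(A)$ gives $[\sup I_i,\inf I_{i+1}]\cap\varrho_{\mathrm{B}}(A)\neq\emptyset$.

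Finally, for the filtration part I would show $[\sup I_i,\inf I_{i+1}]\cap\varrho_{\mathrm{B}}(A)=R_i$ for every $i\in\{0,\dots,\ell\}$, with the conventions $I_0=I_{\ell+1}=\emptyset$, $\inf\emptyset=+\infty$, $\sup\emptyset=-\infty$: the inclusion $\supseteq$ is the chain of inequalities above, and a point $\gamma$ of $\varrho_{\mathrm{B}}(A)$ in that bracket lies in some $R_k$ which must equal $R_i$, since $k<i$ would contradict $\gamma\ge\sup I_i$ and $k>i$ would contradict $\gamma\le\inf I_{i+1}$; here one uses $I_i\neq\emptyset$ resp.\ $I_{i+1}\neq\emptyset$ to exclude the degenerate case in which $I_i$ would reduce to a single point also lying in $\varrho_{\mathrm{B}}(A)$. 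Since $\gamma\mapsto M_\gamma$ is constantly $V_i$ on $R_i$, and $M_\gamma=\{x_0\in\mathbb{R}^d\setminus\{0\}:\overline\beta_A(x_0)<\gamma\}\cup\{0\}$ by definition, the formula for $V_i$ in the theorem is independent of the chosen $\gamma$. The only real difficulty is organisational: all the dynamics is already contained in Lemma~\ref{lem:gamma-prop}, and what remains is bookkeeping with a monotone step function on the line — the one point needing care is pinning down the endpoint conventions so that $[\sup I_i,\inf I_{i+1}]\cap\varrho_{\mathrm{B}}(A)$ is exactly one level set $R_i$, which is why the nonemptiness of the $I_j$ must be established first.
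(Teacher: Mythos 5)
Your proposal is correct and takes essentially the same route as the paper: both decompose $\varrho_{\mathrm B}(A)$ into the level sets of $\gamma\mapsto\dim M_\gamma$ (your $R_j$ are exactly the paper's $J_i$), use Lemma~\ref{lem:gamma-prop}(iii) to show these level sets are intervals on which $M_\gamma$ is constant, and read off the spectrum as the bounded gaps between consecutive level sets. The only difference is cosmetic bookkeeping (your monotone step function $f$ versus the paper's explicit sets $J_i$), together with your additional direct nonemptiness observation that $\overline\beta_A(x_0)\in\Sigma_{\mathrm B}(A)$ for every $x_0\neq 0$, whereas the paper infers $\ell\geq 1$ from $d_0=0<d=d_\ell$.
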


\begin{proof}
%
Let $d_{0} < d_{1 }< \dots <d_{\ell}$ be the elements of the set
\begin{equation*}
   \{ \dim (M_{\gamma})
   :
   \gamma \in \varrho_{\mathrm{B}}(A)
   \}.
\end{equation*}
It is clear that $\ell \leq d$.
For $i\in \left\{ 0,...,\ell \right\} $, define%
\begin{equation*}
   J_{i}
   \coloneqq
   \{ \gamma \in \varrho_{\mathrm{B}}(A)
   :
   \dim (M_{\gamma}) = d_{i}
   \}
\end{equation*}
and note that $\varrho_{\mathrm{B}}(A) = J_0 \cup \dots \cup J_\ell$, where the union is disjoint.

We show that $J_{i}$ is an interval. To this end let $\gamma_1 < \gamma_2$ be two elements of $J_{i}$.
Since $\gamma_1, \gamma_2 \in \varrho_{\mathrm{B}}(A)$ and
$\dim (M_{\gamma_1}) = \dim (M_{\gamma_2})$, Lemma \ref{lem:gamma-prop}(iii) applies and yields Alternative I, proving that $[\gamma_1, \gamma_2] \subseteq J_{i}$.

Using \eqref{eq:Mgamma}, $d_0=0$, $d_\ell = d$, and $(-\infty, - \ln a) \subseteq J_0$, $(\ln a, \infty) \subseteq J_\ell$.
Therefore the complement $\Sigma_{\mathrm{B}}(A)$ of $\varrho_{\mathrm{B}}(A) = J_0 \cup \dots \cup J_\ell$ is the disjoint union of $\ell \in \{1, \dots, d\}$ bounded intervals
$I_1, \dots, I_\ell$ with $\sup I_i \leq \inf I_{i+1}$.

Now $[\sup I_i, \inf I_{i+1}] \cap \varrho_{\mathrm B}(A) = J_i \neq \emptyset$ for $i \in \{1,\dots,\ell-1\}$.
Using Alternative I of Lemma \ref{lem:gamma-prop}(iii), $V_i \coloneqq M_{\gamma_i}$ is well-defined for $\gamma_0 \in (-\infty, \inf I_1)$, $\gamma_i \in [\sup I_i, \inf I_{i+1}] \cap \varrho_{\mathrm B}(A)$ for $i \in \{1, \dots, \ell-1\}$, and $\gamma_\ell \in (\sup I_\ell, \infty)$ and satisfies $\{0\} = V_{0} \subsetneq V_{1} \subsetneq \cdots \subsetneq V_{\ell } = \mathbb{R}^{d}$.
\end{proof}

\section{Bohl dichotomy spectrum}

In this section we introduce a new spectrum based on the notion of Bohl dichotomy and prove a spectral theorem. Similarly as for the exponential dichotomy spectrum one associates to system \eqref{1} a parametrized family of nonautonomous difference equations which are exponentially weighted versions of \eqref{1} by considering for $\gamma \in \mathbb{R}$ the \emph{$\gamma$-shifted} system
\begin{equation}\label{gamma-shift}
   x(n+1) = \mathrm e^{-\gamma }A(n)x(n)
\end{equation}
Note that the transition matrix $\Phi_{\mathrm e^{-\gamma} A}$ of \eqref{gamma-shift} satisfies
\begin{equation*}
   \Phi_{\mathrm e^{-\gamma} A}(n,m)
   =
   \mathrm e^{-\gamma(n-m)}
   \Phi_A(n,m)
   \qquad \text{for }
   n, m \in \mathbb{N}.
\end{equation*}
For convenience we denote the solution $ \Phi_{\mathrm e^{-\gamma} A}(n,0)x_0$ of \eqref{gamma-shift} by $x_\gamma(\cdot,x_0)$. Then
\begin{equation*}
   x_\gamma(n,x_0)
   =
   \mathrm e^{-\gamma n} x(n,x_0)
   \qquad \text{for }
   n \in \mathbb{N}
\end{equation*}
with the solution $x(\cdot,x_0)$ of \eqref{1}.

To introduce a spectral notion based on the Bohl dichotomy we discuss whether \eqref{gamma-shift} admits a Bohl dichotomy, i.e.\ if there exist subspaces $L_1, L_2 \subseteq \mathbb{R}^d$ with $\mathbb{R}^{d}=L_{1} \oplus L_{2}$, $\alpha >0$, and functions $C_{1}, C_{2} \colon \mathbb{R}^{d}\rightarrow \left( 0,\infty \right) $ such
that
\begin{align}\label{Dich-gamma1}
   \| x_\gamma(n, x_0) \|
   &\leq
   C_1(x_0) \mathrm e^{-\alpha (n-m)} \| x_\gamma(m, x_0) \|,
   \quad x_0 \in L_1,
   n \geq m,
\\ \label{Dich-gamma2}
   \| x_\gamma(n, x_0) \|
   &\geq
   C_2(x_0) \mathrm e^{\alpha (n-m)} \| x_\gamma(m, x_0) \|,
   \quad x_0 \in L_2, n \geq m.
\end{align}

\begin{definition}[Bohl dichotomy spectrum]
\label{DB}The Bohl dichotomy spectrum of \eqref{1} is defined as
\begin{equation*}
   \Sigma _{\mathrm{BD}}(A)
   \coloneqq
   \left\{ \gamma \in \mathbb{R}:x(n+1)=\mathrm e^{-\gamma }A(n)x(n)%
    \text{ has no Bohl dichotomy}\right\} .
\end{equation*}
Its complement $\varrho_{\mathrm{BD}}(A) \coloneqq \mathbb{R} \setminus \Sigma _{\mathrm{BD}}(A)$ is called the resolvent of \eqref{1}.
\end{definition}

\begin{remark}[Bohl dichotomy spectrum is bounded]\label{rem:spectrum-bounded}
\begin{equation*}
   \Sigma _{\mathrm{BD}}(A) \subseteq
   \big[ - \ln \|A^{-1}\|_{\infty}, \ln \|A\|_{\infty}\big].
\end{equation*}
This follows from the fact that $A \in \mathcal{L}^{\mathrm{Lya}}(\mathbb{N},\mathbb{R}^{d\times d})$ is a Lyapunov sequence, and the following estimate for \(m,n\in\mathbb N\) with \(n\geq m\) and \(x_0\in\R^d\),
\begin{align*}
    \Vert x_\gamma(n,x_0)\Vert
    &= \Vert\mathrm e^{-\gamma(n-m)}\Phi_A(n,m)\mathrm e^{-\gamma m}\Phi_A(m,0)x_0\Vert
    \\
    &\leq \mathrm e^{-\gamma(n-m)}\mathrm e^{(n-m)\ln \|A\|_{\infty}}\Vert x_\gamma(m,x_0)\Vert
\end{align*}
proving that for $\gamma >\ln \|A\|_{\infty}$, the $\gamma$-shifted system \eqref{gamma-shift} has a Bohl dichotomy with $L_1 = \R^d$ and $L_2=\{0\}$.
Similarly the estimate for $m \geq n$
\begin{align*}
    \Vert x_\gamma(m,x_0)\Vert
    &= \Vert \mathrm e^{\gamma(n-m)}\Phi_A(m,n)\mathrm e^{-\gamma n}\Phi_A(n,0)x_0\Vert
    \\
    &\leq \mathrm e^{\gamma(n-m)}\mathrm e^{(n-m) \ln\Vert A^{-1}\Vert_\infty}\Vert x_\gamma(n,x_0)\Vert.
\end{align*}
shows that for $\gamma < -\ln\Vert A^{-1}\Vert_\infty$, system \eqref{gamma-shift} has a Bohl dichotomy with $L_1 = \{0\}$ and $L_2=\R^d$.
\end{remark}

\begin{remark}[Bohl dichotomy resolvent is open]\label{rem:resolvent-open}
The resolvent $\varrho_{\mathrm{BD}}(A)$ is open, since for any $\gamma \in \varrho_{\mathrm{BD}}(A)$ the estimates
\begin{align*}
   \| x(n, x_0) \|
   &\leq
   C_1(x_0) \mathrm e^{(\gamma-\alpha) (n-m)} \| x(m, x_0) \|,
   \quad x_0 \in L_1,
   n \geq m,
\\
   \| x(n, x_0) \|
   &\geq
   C_2(x_0) \mathrm e^{(\gamma + \alpha) (n-m)} \| x(m, x_0) \|,
   \quad x_0 \in L_2, n \geq m,
\end{align*}
hold and for $\varepsilon \coloneqq \alpha / 2$ and $\zeta \in (\gamma - \varepsilon, \gamma + \varepsilon)$, using the fact that $\zeta - \varepsilon > \gamma - \alpha$ and $\zeta + \varepsilon < \gamma + \alpha$, they imply
\begin{align*}
   \| x(n, x_0) \|
   &\leq
   C_1(x_0) \mathrm e^{(\zeta-\varepsilon) (n-m)} \| x(m, x_0) \|,
   \quad x_0 \in L_1,
   n \geq m,
\\
   \| x(n, x_0) \|
   &\geq
   C_2(x_0) \mathrm e^{(\zeta + \varepsilon) (n-m)} \| x(m, x_0) \|,
   \quad x_0 \in L_2, n \geq m.
\end{align*}
\end{remark}

Similarly as for the Bohl spectrum in Definition \ref{def:ExpStableSet} in the last section, we introduce a family of sets of initial conditions of solutions with prescribed asymptotic behavior.

\begin{definition}[$\gamma$-attractive subset $S_\gamma$]
For $\gamma \in \mathbb{R}$ the set
 \begin{equation*}
   S_{\gamma} \coloneqq \{x_0 \in \mathbb{R}^d :
   \lim_{n\to\infty}x_\gamma(n,x_0) = 0
   \}
\end{equation*}
is called \emph{$\gamma$-attractive subset} of \eqref{1}.
\end{definition}

The sets $S_\gamma$ and their properties will play a crucial role in the formulation and proof of the Bohl dichotomy spectral theorem below (cp.\ also Lemma \ref{lem:gamma-prop}).

\begin{lemma}[Properties of $S_\gamma$]\label{lem:propSgamma}\hfill

(i) (Subspace) $S_\gamma$ is a linear subspace of $\mathbb{R}^{d}$ for each $\gamma \in \mathbb{R}$.

(ii) (Bohl dichotomy space) $S_\gamma$ is the Bohl dichotomy space $L_1$ on resolvent intervals.
\begin{equation*}
   \left.
   \begin{gathered}
      \gamma \in \varrho_{\mathrm{BD}}(A)
      \,\text{and } x(n+1)=\mathrm{e}^{-\gamma }A(n)x(n)
   \\
      \text{has a Bohl dichotomy on } L_1 \oplus L_2 = \mathbb{R}^d
   \end{gathered}
   \right\}
   \quad\Rightarrow\quad
   L_1 = S_{\gamma}.
\end{equation*}

(iii) (Monotonicity) $S_\gamma$ is monotone
\begin{equation*}
   S_{\gamma_1} \subseteq S_{\gamma_2},
   \qquad
   \gamma_1 \leq \gamma_2,
\end{equation*}
and eventually constant
\begin{equation*}
   S_\gamma
   =
   \begin{cases}
      \{0\}, & \text{ for } \gamma \in (-\infty, -\ln \|A^{-1}\|_{\infty}),
   \\
      \mathbb{R}^d, & \text{ for } \gamma \in (\ln \|A\|_{\infty}, \infty).
   \end{cases}
\end{equation*}

(iv) ($S_\gamma$ is constant on resolvent intervals) Let $\gamma_1, \gamma_2 \in \varrho_{\mathrm{BD}}(A)$ with $\gamma_1 < \gamma_2$. Then exactly one of the following two alternatives holds and the statements in each alternative are equivalent:

\begin{tabular}{ll}
   \hspace*{7ex}Alternative I\hspace*{9ex} & \hspace*{12ex}Alternative II
\\
   \hspace*{3ex}(A) $[\gamma_1, \gamma_2] \subseteq \varrho_{\mathrm{BD}}(A)$.
   &
   (A') There exists $\zeta \in (\gamma_1, \gamma_2) \cap \Sigma_{\mathrm{BD}}(A)$.
\\
   \hspace*{3ex}(B) $S_{\gamma_1} = S_{\gamma_2}$.
   &
   (B') $\dim S_{\gamma_1} < \dim S_{\gamma_2}$.
\end{tabular}
\end{lemma}

\begin{proof}
(i) This follows from the fact that \eqref{1} is a linear equation.

(ii) Suppose that $\gamma \in \varrho_{\mathrm{B}}(A)$ and system \eqref{gamma-shift} has a Bohl dichotomy \eqref{Dich-gamma1}, \eqref{Dich-gamma2}, on a splitting $L_1 \oplus L_2 = \mathbb{R}^d$ w.r.t.\ $\gamma$.
The inclusion $L_1\subseteq S_\gamma$ is clear.
To show that also $S_\gamma \subseteq L_1$, let $x_0\notin L_1$.
We show that $x_\gamma(\cdot,x_0)$ is not a null-sequence.
Let $x_0 = x_1 + x_2$, with $x_1\in L_1$ and $x_2\in L_2\setminus\{0\}$.
Then by \eqref{Dich-gamma2}
\begin{align*}
    \Vert x_\gamma(n,x_0)\Vert
    = \Vert x_\gamma(n,x_1) + x_\gamma(n,x_2)\Vert
    &\geq \big\vert \Vert x_\gamma(n,x_1)\Vert - \Vert x_\gamma(n,x_2)\Vert\big\vert
    \\
    &\geq \big\vert \Vert x_\gamma(n,x_1)\Vert - C_2(x_2)\mathrm e^{\alpha n}\Vert x_2\Vert\big\vert.
\end{align*}
The right-hand side tends to infinity, for $n$ to infinity and the assertion follows. In particular, if \eqref{gamma-shift} has a Bohl dichotomy on a splitting $L_1 \oplus L_2 = \mathbb{R}^d$ then $L_1$ is unique.

(iii) This is a consequence of Remark \ref{rem:spectrum-bounded} and (ii).

(iv) $(A) \Rightarrow (B)$. Assume that $S_{\gamma_1} \neq S_{\gamma_2}$ and define
\begin{equation*}
   \zeta_0
   \coloneqq
   \sup \{\zeta \in [\gamma_1, \gamma_2] : S_{\zeta} = S_{\gamma_1}\}
   \in \varrho_{\mathrm{BD}}(A).
\end{equation*}
By Remark \ref{rem:resolvent-open} there exists $\varepsilon > 0$ such that $S_\zeta = S_{\zeta_0}$ for $\zeta \in (\zeta_0 - \varepsilon, \zeta_0 + \varepsilon)$ which contradicts the definition of $\zeta_0$.

$(B) \Rightarrow (A)$. For $\gamma_1 \in \varrho_{\mathrm{BD}}(A)$ the first dichotomy estimate
\begin{equation*}
   \| x(n, x_0) \|
   \leq
   C_1(x_0) \mathrm e^{(\gamma_1-\alpha_1) (n-m)} \| x(m, x_0) \|,
   \quad x_0 \in L_1,
   n \geq m,
\end{equation*}
holds with $\alpha_1 > 0$ on $L_1$, and by (ii) $L_1 = S_{\gamma_1}$. For $\gamma_2 \in \varrho_{\mathrm{BD}}(A)$ the second dichotomy estimate
\begin{equation*}
   \| x(n, x_0) \|
   \geq
   C_2(x_0) \mathrm e^{(\gamma_2 + \alpha_2) (n-m)} \| x(m, x_0) \|,
   \quad x_0 \in L_2, n \geq m,
\end{equation*}
holds with an $\alpha_2 > 0$ on a subspace $L_2$ which by (ii) has the property that $S_{\gamma_2} \oplus L_2 = \mathbb{R}^d$. Since $S_{\gamma_2} = S_{\gamma_1} = L_1$, we get $L_1 \oplus L_2 = \mathbb{R}^d$. With
$\alpha \coloneqq \min\{\alpha_1, \alpha_2\}$ it follows that
\begin{align*}
   \| x(n, x_0) \|
   &\leq
   C_1(x_0) \mathrm e^{(\gamma_1-\alpha) (n-m)} \| x(m, x_0) \|,
   \quad x_0 \in L_1,
   n \geq m,
\\
   \| x(n, x_0) \|
   &\geq
   C_2(x_0) \mathrm e^{(\gamma_2 + \alpha) (n-m)} \| x(m, x_0) \|,
   \quad x_0 \in L_2, n \geq m.
\end{align*}
As a consequence also $\gamma \in \varrho_{\mathrm{BD}}(A)$ for each $\gamma \in [\gamma_1, \gamma_2]$.

$(A') \Leftrightarrow (B')$. Obviously $(A')$ is the opposite of $(A)$. Using the fact that $S_{\gamma_1} \subseteq S_{\gamma_2}$, also $(B')$ is the opposite of $(B)$. Since $(A) \Leftrightarrow (B)$, also $(A') \Leftrightarrow (B')$.
\end{proof}

\begin{remark}[Bohl Dichotomy Subspaces]
(a) Suppose that system \eqref{1} has a Bohl dichotomy on a decomposition \(L_1\oplus L_2\) and \(\widehat L_1\oplus\widehat L_2\) of \(\R^d\).
Then by Lemma \ref{lem:propSgamma}(ii) \(L_1 = S_\gamma = \widehat L_1\).

(b) If system \eqref{1} has a Bohl dichotomy on a decomposition \(L_1\oplus L_2\), then system \eqref{1} has a Bohl dichotomy on any decomposition of the form \(L_1\oplus\widehat L_2\) of \(\R^d\).
We prove this fact in Lemma \ref{lem:BD-characterization}$(ii)\Leftrightarrow(iii)$ where \(L_1\) is explicitly defined and \(L_2\) is chosen arbitrarily complementary to \(L_1\).
\end{remark}

The following theorem is the main result of this section.

\begin{theorem}[Bohl Dichotomy Spectral Theorem]\label{thm:Bohl-Dichotomy-Spectrum}
The Bohl dichotomy spectrum $\Sigma _{\mathrm{BD}}(A)$
of system (\ref{1}) is the nonempty disjoint union of at most $d$ compact intervals
\begin{equation*}
   \Sigma _{\mathrm{BD}}(A)
   =
   [\alpha _{1}, \beta _{1}] \cup \dots \cup [\alpha_{\ell },\beta _{\ell}],
\end{equation*}
where $\alpha_1 \leq \beta_1 < \alpha_2 \leq \beta_2 < \dots < \alpha_\ell \leq \beta_\ell$ and $\ell \in \{1, \dots, d\}$.

Moreover, there exists a corresponding filtration
\begin{equation}
   \{0\}
   =
   V_{0} \subsetneq V_{1} \subsetneq \cdots \subsetneq V_{\ell }
   =
   \mathbb{R}^{d},
\end{equation}
satisfying the following characterization for $i \in \{0, \dots, \ell\}$
\begin{equation*}
   V_i
   =
   \{ x_0 \in \mathbb{R}^{d}
   :
   \lim_{n \to \infty} x(n,x_0) e^{-\gamma n}
   =
   0
   \}
   \quad
   \text{for each }
   \gamma \in (\beta_i, \alpha_{i+1}),
\end{equation*}
with $\beta_0 \coloneqq -\infty$ and $\alpha_{\ell + 1} \coloneqq \infty$.
\end{theorem}

\begin{proof}
For $k \in \{0,\dots,d\}$ the sets $\{\gamma \in \varrho_{\mathrm{BD}}(A) : \dim S_\gamma = k\}$ are intervals by Lemma \ref{lem:propSgamma}(iv), open by Remark \ref{rem:resolvent-open}, disjoint by definition, and for $k=0$ and $k=d$ unbounded to the left and right, respectively, by Remark \ref{rem:spectrum-bounded}. Since
\begin{equation*}
   \varrho_{\mathrm{BD}}(A)
   =
   \bigcup_{k = 0}^{d}
   \{\gamma \in \varrho_{\mathrm{BD}}(A) : \dim S_\gamma = k\},
\end{equation*}
its complement $\Sigma_{\mathrm{BD}}(A)$ is the disjoint union of $\ell \in \{1, \dots, d\}$ closed intervals $[\alpha_1, \beta_1], \dots, [\alpha_\ell, \beta_\ell]$ with
$\alpha_1 \leq \beta_1 < \alpha_2 \leq \beta_2 < \dots < \alpha_\ell \leq \beta_\ell$ and $\ell \in \{1, \dots, d\}$. For $i \in \{1, \dots, \ell-1\}$ there exists a $k \in \{0,\dots,d\}$ with
\begin{equation*}
   (\beta_i, \alpha_{i+1})
   =
   \{\gamma \in \varrho_{\mathrm{BD}}(A) : \dim S_\gamma = k\}.
\end{equation*}
For $\gamma_1, \gamma_2 \in (\beta_i, \alpha_{i+1})$ with $\gamma_1 < \gamma_2$, the fact that $\dim S_{\gamma_1} = \dim S_{\gamma_2}$ and $S_{\gamma_1} \subseteq S_{\gamma_2}$ implies that $S_{\gamma_1} = S_{\gamma_2}$, proving that
\begin{equation*}
   V_i
   \coloneqq
   S_\gamma
   =
   \{ x_0 \in \mathbb{R}^{d}
   :
   \lim_{n \to \infty} x(n,x_0) \mathrm e^{-\gamma n}
   =
   0
   \}
\end{equation*}
is well-defined for $\gamma \in (\beta_i, \alpha_{i+1})$.
\end{proof}

\section{Relation between the Bohl, Bohl dichotomy and exponential dichotomy spectra}

We recall the notion of exponential dichotomy spectrum, some of its properties and the exponential dichotomy spectral theorem from \cite{AulbachSiegmund2002, Poetzsche2010} with slightly adjusted notation.

\begin{definition}[Exponential dichotomy spectrum]
\label{ED-spectrum}The exponential dichotomy spectrum of \eqref{1} is defined as
\begin{equation*}
   \Sigma _{\mathrm{ED}}(A)
   \coloneqq
   \left\{ \gamma \in \mathbb{R}:x(n+1)=\mathrm e^{-\gamma }A(n)x(n)%
    \text{ has no exponential dichotomy}\right\} .
\end{equation*}
Its complement $\varrho_{\mathrm{ED}}(A) \coloneqq \mathbb{R} \setminus \Sigma _{\mathrm{ED}}(A)$ is called the resolvent of \eqref{1}.
\end{definition}

\begin{theorem}[Exponential Dichotomy Spectral Theorem]\label{thm:Exponential-Dichotomy-Spectrum}
The exponential dichotomy spectrum $\Sigma _{\mathrm{ED}}(A)$
of system (\ref{1}) is the nonempty disjoint union of at most $d$ compact intervals
\begin{equation*}
   \Sigma _{\mathrm{ED}}(A)
   =
   [\alpha _{1}, \beta _{1}] \cup \dots \cup [\alpha_{\ell },\beta _{\ell}],
\end{equation*}
where $\alpha_1 \leq \beta_1 < \alpha_2 \leq \beta_2 < \dots < \alpha_\ell \leq \beta_\ell$ and $\ell \in \{1, \dots, d\}$.

Moreover, there exists a corresponding filtration
\begin{equation}
   \{0\}
   =
   V_{0} \subsetneq V_{1} \subsetneq \cdots \subsetneq V_{\ell }
   =
   \mathbb{R}^{d}
\end{equation}
satisfying the following characterization for $i \in \{0, \dots, \ell\}$
\begin{equation*}
   V_i
   =
   \{ x_0 \in \mathbb{R}^{d}
   :
   \lim_{n \to \infty} x(n,x_0) \mathrm e^{-\gamma n}
   =
   0
   \}
   \quad
   \text{for each }
   \gamma \in (\beta_i, \alpha_{i+1}),
\end{equation*}
with $\beta_0 \coloneqq -\infty$ and $\alpha_{\ell + 1} \coloneqq \infty$.
\end{theorem}

\begin{proof}
See e.g.\ \cite{AulbachSiegmund2002, Poetzsche2010, Russ2017}.
\end{proof}

\begin{remark}[Minimum and maximum of $\Sigma _{\mathrm{ED}}(A)$ are Bohl exponents]\label{rem:MinMaxED}
\begin{equation*}
   \min \Sigma _{\mathrm{ED}}(A) = \underline{\beta}_A(\mathbb{R}^d)
   \qquad
   \text{and}
   \qquad
   \max \Sigma _{\mathrm{ED}}(A) = \overline{\beta}_A(\mathbb{R}^d).
\end{equation*}
To show that
$\max \Sigma _{\mathrm{ED}}(A) \geq \overline{\beta}_A(\mathbb{R}^d)$, let $\gamma > \max \Sigma _{\mathrm{ED}}(A)$. Then
\begin{equation*}
   \lim_{n \to \infty} x(n,x_0) \mathrm e^{-\gamma n}
   =
   0
   \qquad
   \text{for each }
   x_0 \in \mathbb{R}^d
\end{equation*}
by Theorem \ref{thm:Exponential-Dichotomy-Spectrum}.
Using the fact that $\gamma \in \rho_{\mathrm{ED}}(A)$, it follows that there exists $K > 0$, $\alpha > 0$, such that
\begin{equation*}
   \| x(n, x_0) \|
   \leq
   K \mathrm e^{(\gamma-\alpha) (n-m)} \| x(m, x_0) \|,
   \qquad x_0 \in \mathbb{R}^d,
   n \geq m.
\end{equation*}
By Lemma \ref{lem:bohlproperties}(iii), $\gamma \geq \overline{\beta}_A(\mathbb{R}^d)$.
To show that
$\max \Sigma _{\mathrm{ED}}(A) \leq \overline{\beta}_A(\mathbb{R}^d)$, let $\gamma > \overline{\beta}_A(\mathbb{R}^d)$ choose $\alpha \in (0, \gamma - \overline{\beta}_A(\mathbb{R}^d))$. Then $\gamma - \alpha > \overline{\beta}_A(\mathbb{R}^d)$ and
again by Lemma \ref{lem:bohlproperties}(iii) there exists a $K > 0$ such that
\begin{equation*}
   \| x(n, x_0) \|
   \leq
   K \mathrm e^{(\gamma - \alpha)(n-m)} \| x(m, x_0) \|,
   \qquad x_0 \in \mathbb{R}^d,
   n \geq m,
\end{equation*}
i.e.\ $\gamma \in \rho_{\mathrm{ED}}(A)$ and by Theorem \ref{thm:Exponential-Dichotomy-Spectrum}, also $\gamma > \max \Sigma _{\mathrm{ED}}(A)$, proving that $\max \Sigma _{\mathrm{ED}}(A) = \overline{\beta}_A(\mathbb{R}^d)$.

The equality $\min \Sigma _{\mathrm{ED}}(A) = \underline{\beta}_A(\mathbb{R}^d)$ follows similarly.
\end{remark}

\begin{remark}[Exponential dichotomy spectrum for scalar and diagonal systems]\label{rem:ED_scalar}
(a)
If $d=1$ then system \eqref{1} is of the form $x(n+1)=a(n)x(n)$, $n \in \mathbb{N}$, and then by Theorem \ref{thm:Exponential-Dichotomy-Spectrum} and Remark \ref{rem:MinMaxED}
\begin{equation*}
    \Sigma _{\mathrm{ED}}(a)
    =
    \big[\underline{\beta}_a(\mathbb{R}), \overline{\beta}_a(\mathbb{R}) \big]
\end{equation*}
with
\begin{equation*}
   \underline{\beta}_a(\mathbb{R})
   =
   \liminf_{n-m \to \infty} \tfrac{1}{n-m} \ln \prod_{k=m}^{n-1} |a(k)|
   \quad\text{and}\quad
   \overline{\beta}_a(\mathbb{R})
   =
   \limsup_{n-m \to \infty} \tfrac{1}{n-m} \ln \prod_{k=m}^{n-1} |a(k)|.
\end{equation*}

(b)
If system \ref{1} is diagonal, i.e.\ $A = \operatorname{diag}(a_{11},\dots,a_{dd})$ then
\begin{equation*}
    \Sigma_{\mathrm{ED}}(A) = \bigcup_{k=1}^d\Sigma_{\mathrm{ED}}(a_{kk}).
\end{equation*}
For a proof see, e.g.\ \cite[Corollary 3.25]{Poetzsche2012}.
\end{remark}

To discuss the relation between the Bohl spectrum, Bohl dichotomy spectrum and exponential dichotomy spectrum, we show two preparatory lemmas on characterizations of Bohl dichotomy and exponential dichotomy.

\begin{lemma}[Characterization of Bohl dichotomy]\label{lem:BD-characterization}
The following three statements are equivalent:

(i) System \eqref{1} has a Bohl dichotomy.

(ii) There exists a splitting $L_1 \oplus L_2 = \mathbb{R}^d$ with
\begin{equation*}
   \sup_{x_0 \in L_1 \setminus \{0\}}
   \overline{\beta}_{A}(x_0)
   < 0
   \qquad \text{and} \qquad
   \inf_{x_0 \in L_2 \setminus \{0\}}
   \underline{\beta}_{A}(x_0)
   > 0.
\end{equation*}

(iii) There is \(\alpha>0\), such that for all \(x_0\in\R^d\setminus\{0\}\),
\begin{equation*}
    \overline\beta_A(x_0)\leq-\alpha\qquad\text{or}\qquad\underline\beta_A(x_0)\geq\alpha.
\end{equation*}

\end{lemma}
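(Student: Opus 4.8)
The plan is to establish the two equivalences $(i)\Leftrightarrow(ii)$ and $(ii)\Leftrightarrow(iii)$, since the only delicate passage, namely recovering a Bohl dichotomy from $(iii)$, most naturally runs through $(ii)$.

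For $(i)\Leftrightarrow(ii)$ the two directions are mirror images, each being a direct consequence of Lemma~\ref{lem:bohlproperties}(iii) together with Lemma~\ref{lem:bohlproperties}(iv). Assuming $(i)$ on a splitting $L_1\oplus L_2=\R^d$ with exponent $\alpha>0$ and functions $C_1,C_2$: for $x_0\in L_1\setminus\{0\}$ the estimate~\eqref{11} holds, with the same constant $C_1(x_0)$, for every vector of $\operatorname{span}\{x_0\}$ (since $x(\cdot,cx_0)=c\,x(\cdot,x_0)$), so the implication ``estimate $\Rightarrow\gamma\geq\overline\beta_A(L)$'' of Lemma~\ref{lem:bohlproperties}(iii), applied with $L=\operatorname{span}\{x_0\}$ and $\gamma=-\alpha$, yields $\overline\beta_A(x_0)\leq-\alpha$; taking the supremum gives the first inequality of $(ii)$, and \eqref{12} gives the second in the same way. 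Conversely, assuming $(ii)$, set $s_1:=\sup_{x_0\in L_1\setminus\{0\}}\overline\beta_A(x_0)<0$, $s_2:=\inf_{x_0\in L_2\setminus\{0\}}\underline\beta_A(x_0)>0$ and fix $\alpha\in\bigl(0,\min\{-s_1,s_2\}\bigr)$; then $-\alpha>\overline\beta_A(\operatorname{span}\{x_0\})$ for $x_0\in L_1\setminus\{0\}$, so the first implication of Lemma~\ref{lem:bohlproperties}(iii) provides a constant which, after adjusting it to also cover the trivial case $n=m$, serves as $C_1(x_0)$; symmetrically one gets $C_2$ on $L_2$, and extending $C_1,C_2$ arbitrarily off $L_1,L_2$ produces a Bohl dichotomy, i.e.\ $(i)$.

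For $(ii)\Rightarrow(iii)$, set $\alpha:=\min\{-s_1,s_2\}>0$ and take $x_0=u+v\in\R^d\setminus\{0\}$ with $u\in L_1$, $v\in L_2$. If $v=0$ then $\overline\beta_A(x_0)\leq s_1\leq-\alpha$; if $v\neq0=u$ then $\underline\beta_A(x_0)\geq s_2\geq\alpha$; and if $u,v\neq0$ then $\overline\beta_A(u)\leq s_1<0$ and $\underline\beta_A(v)\geq s_2>0$, so Lemma~\ref{lem:bohlproperties}(vi), applied with $v$ as the (growing) unperturbed solution and $u$ as the exponentially decaying perturbation, gives $\underline\beta_A(x_0)=\underline\beta_A(v+u)\geq\underline\beta_A(v)\geq\alpha$. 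Thus $(iii)$ holds. For $(iii)\Rightarrow(ii)$ note first that $(-\alpha,\alpha)\cap\Sigma_{\mathrm B}(A)=\emptyset$: a point of this intersection would lie in some Bohl interval $[\underline\beta_A(x_0),\overline\beta_A(x_0)]$, forcing $\underline\beta_A(x_0)<\alpha$ and $\overline\beta_A(x_0)>-\alpha$, contradicting $(iii)$. Hence $0\in\varrho_{\mathrm B}(A)$, and by Lemma~\ref{lem:gamma-prop}(ii) the set $L_1:=M_0$ is a linear subspace of $\R^d$. Using $(iii)$ one checks the dichotomy of exponents between $L_1$ and its set-complement: for $x_0\in L_1\setminus\{0\}$ one has $\overline\beta_A(x_0)<0$, which with $(iii)$ forces $\overline\beta_A(x_0)\leq-\alpha$ (the alternative $\underline\beta_A(x_0)\geq\alpha$ would give $\overline\beta_A(x_0)\geq\alpha>0$), while for $x_0\notin L_1$ one has $\overline\beta_A(x_0)\geq0>-\alpha$, so $(iii)$ forces $\underline\beta_A(x_0)\geq\alpha$. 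Finally choose \emph{any} algebraic complement $L_2$ of $L_1$ in $\R^d$; since $L_2\cap L_1=\{0\}$, every $x_0\in L_2\setminus\{0\}$ lies outside $L_1$ and hence satisfies $\underline\beta_A(x_0)\geq\alpha$. Therefore $\sup_{x_0\in L_1\setminus\{0\}}\overline\beta_A(x_0)\leq-\alpha<0$ and $\inf_{x_0\in L_2\setminus\{0\}}\underline\beta_A(x_0)\geq\alpha>0$, which is $(ii)$.

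The main obstacle is precisely this last step: one must recognise that the ``stable'' set $M_0$ is automatically linear on the resolvent (Lemma~\ref{lem:gamma-prop}) and that, its set-theoretic complement being exactly the ``unstable'' set $\{x_0:\underline\beta_A(x_0)\geq\alpha\}$, \emph{any} algebraic complement of $M_0$ inherits the required lower growth bound, so no further selection of $L_2$ is needed. The remaining implications are routine applications of Lemma~\ref{lem:bohlproperties}, with part~(vi) carrying the load in $(ii)\Rightarrow(iii)$.
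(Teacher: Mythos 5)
Your proof is correct, and it follows essentially the same overall structure as the paper: $(i)\Leftrightarrow(ii)$ via Lemma~\ref{lem:bohlproperties}(iii)--(iv), $(ii)\Rightarrow(iii)$ via Lemma~\ref{lem:bohlproperties}(vi), and the converse $(iii)\Rightarrow(ii)$ by showing the stable set is a subspace and taking any algebraic complement. The one genuine variation is in how you establish that $L_1:=\{x_0:\overline\beta_A(x_0)\leq-\alpha\}\cup\{0\}$ is a subspace: the paper proves linearity directly from Lemma~\ref{lem:bohlproperties}(iv) (scalar invariance) together with Lemma~\ref{lem:bohlproperties}(v) (a sum of two exponentially decaying solutions has nonpositive lower Bohl exponent, which with~$(iii)$ then forces the sum back into $L_1$), whereas you first note $(-\alpha,\alpha)\cap\Sigma_{\mathrm B}(A)=\emptyset$, so $0\in\varrho_{\mathrm B}(A)$, identify $L_1=M_0$, and invoke Lemma~\ref{lem:gamma-prop}(ii). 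Your route is a bit more structural in that it reuses the $M_\gamma$-machinery from the Bohl spectral theorem rather than re-deriving linearity ab initio; both are valid, and the amount of work is comparable since Lemma~\ref{lem:gamma-prop}(ii) ultimately rests on the same Lemma~\ref{lem:bohlproperties} properties.
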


\begin{proof} $(i) \Leftrightarrow (ii)$:
Suppose that system \eqref{1} has a Bohl dichotomy.
Let \(L_1,L_2\subseteq\R^d\) be such that the inequalities \eqref{Dich-gamma1} resp.\ \eqref{Dich-gamma2} hold on \(L_1\) resp.\ \(L_2\) for \(\alpha>0\).
Then from Lemma \ref{lem:bohlproperties}(iii) it follows for \(x_0\in L_1\setminus\{0\}\) for which the inequality \eqref{Dich-gamma1} holds, that \(-\alpha\geq\overline\beta_A(x_0)\) and for \(x_0\in L_2\setminus\{0\}\) for which the inequality \eqref{Dich-gamma2} holds, that \(\alpha\leq\underline\beta_A(x_0)\).

For the converse, let \(L_1\) and \(L_2\) be the subspaces, for which the inequality for the exponents hold.
Then there is \(\alpha>0\), such that
\begin{equation*}
    \sup_{x_0 \in L_1 \setminus \{0\}}
   \overline{\beta}_{A}(x_0)
   <-\alpha<0<\alpha<
   \inf_{x_0 \in L_2 \setminus \{0\}}
   \underline{\beta}_{A}(x_0).
\end{equation*}
For \(x_0\in L_1\setminus\{0\}\) it follows from \(\overline\beta_A(x_0)<-\alpha\) and Lemma \ref{lem:bohlproperties}(iii), that there is \(C(x_0)>0\), such that for \(x_0\) the inequality \eqref{Dich-gamma1} holds.
Similarly, the inequality \eqref{Dich-gamma2} for \(x_0 \in L_2 \setminus\{0\}\) can be shown.

$(i) \Leftrightarrow (iii)$:
Suppose that system \(\eqref{1}\) has a Bohl dichotomy.
By $(ii)$ there are subspaces \(L_1,L_2\subseteq\R^d\) and \(\alpha>0\), such that for \(x_1\in L_1\setminus\{0\}\), we have \(\overline\beta_A(x_1)\leq-\alpha\) and if \(x_2\in L_2\setminus\{0\}\), we have \(\underline\beta_A(x_2)\geq\alpha\).
Now let \(x_0=x_1+x_2\in\R^d\setminus\{0\}\) with \(x_1\in L_1\) and \(x_2\in L_2\).
In case $x_2 = 0$, we have \(\overline\beta_A(x_0) = \overline\beta_A(x_1)\leq-\alpha\).
In case $x_1 = 0$, we have \(\underline\beta_A(x_0) = \underline\beta_A(x_2)\geq \alpha\).
Otherwise $x_1 \neq 0$ and $x_2 \neq 0$ and we conclude, using Lemma \ref{lem:bohlproperties}(vi),
\begin{equation*}
    \underline\beta_A(x_0) = \underline\beta_A(x_1+x_2) \geq \underline\beta_A(x_2)\geq\alpha > 0.
\end{equation*}
For the converse, suppose there is \(\alpha>0\), such that for all \(x_0\in\R^d\setminus\{0\}\),
\begin{equation*}
    \overline\beta_A(x_0)\leq-\alpha\quad\text{or}\quad\underline\beta_A(x_0)\geq\alpha.
\end{equation*}
We define
\begin{equation*}
    L_1
    \coloneqq
    \big\{x_0\in\R^d\setminus\{0\} : \overline\beta_A(x_0)\leq-\alpha\big\}\cup\{0\},
\end{equation*}
and show that $L_1$ is a subspace of $\mathbb{R}^d$.
To this end let $x_1,x_2\in L_1$ and $\lambda\in\mathbb R$.
From Lemma \ref{lem:bohlproperties}(iv) it follows that $\overline\beta_A(\lambda x_1) = \overline\beta_A(x_1)\leq -\alpha$ i.e.\ $\lambda x_1\in L_1$.
By Lemma \ref{lem:bohlproperties}(v) it follows from \(\overline\beta_A(x_1),\overline\beta_A(x_2)<0\) that \(\underline\beta_A(x_1+x_2)\leq 0\).
By assumption it follows that \(\overline\beta_A(x_1+x_2)\leq-\alpha\) and we conclude that $x_1+x_2\in L_1$.
Hence $L_1$ is a subspace.

Now let $L_2\subseteq\mathbb R^d$ be any subspace such that $L_1\oplus L_2 =\mathbb R^d$.
If $x_0\in L_2$, then either $x_0 = 0$ or $x_0 \not\in  L_1$, i.e.\ $\overline\beta_A(x_0)>-\alpha$. By assumption it then follows that $\underline\beta_A(x_0)\geq\alpha$, hence
\begin{equation*}
   \sup_{x_0 \in L_1 \setminus \{0\}}
   \overline{\beta}_{A}(x_0)
   < 0
   \qquad \text{and} \qquad
   \inf_{x_0 \in L_2 \setminus \{0\}}
   \underline{\beta}_{A}(x_0)
   > 0
\end{equation*}
and $(ii)$ holds, which is equivalent to $(i)$.
\end{proof}

\begin{lemma}[Characterization of exponential dichotomy]\label{lem:ED-characterization}
The following statements are equivalent:

(i) System (\ref{1}) has an exponential dichotomy.

(ii) There exists a splitting $L_1 \oplus L_2 = \mathbb{R}^d$ with
\begin{equation*}
   \overline{\beta}_{A}(L_1)
   < 0
   \qquad \text{and} \qquad
   \underline{\beta}_{A}(L_2)
   > 0.
\end{equation*}
\end{lemma}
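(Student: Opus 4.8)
The plan is to reduce both directions to Lemma~\ref{lem:bohlproperties}(iii), which translates uniform exponential bounds on a subspace into inequalities for its upper and lower Bohl exponents and conversely. Once that tool is in place, the statement is essentially a reformulation of the dichotomy estimates \eqref{Dich1}, \eqref{Dich2} in the language of $\overline{\beta}_A$ and $\underline{\beta}_A$.

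For $(i)\Rightarrow(ii)$, I would start from an exponential dichotomy on $L_1\oplus L_2=\mathbb{R}^d$ with rate $\alpha>0$ and constant $K>0$. If $L_1\neq\{0\}$, then for $x_0\in L_1\setminus\{0\}$ and $n>m$ we may divide \eqref{Dich1} by $\|x(m,x_0)\|\neq0$ and take logarithms to get $\tfrac{1}{n-m}\ln\tfrac{\|x(n,x_0)\|}{\|x(m,x_0)\|}\le\tfrac{\ln K}{n-m}-\alpha$; taking the supremum over $x_0$, then over $n-m>N$, then the infimum over $N\in\mathbb{N}$ in \eqref{BohlOnL1} yields $\overline{\beta}_A(L_1)\le-\alpha<0$, and the case $L_1=\{0\}$ holds by the convention $\overline{\beta}_A(\{0\})=-\infty$. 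Symmetrically, \eqref{Dich2} together with \eqref{BohlOnL2} gives $\underline{\beta}_A(L_2)\ge\alpha>0$ (or $\underline{\beta}_A(\{0\})=+\infty$ if $L_2=\{0\}$).

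For $(ii)\Rightarrow(i)$, given a splitting with $\overline{\beta}_A(L_1)<0$ and $\underline{\beta}_A(L_2)>0$, I would pick $\gamma_1\in(\overline{\beta}_A(L_1),0)$ and $\gamma_2\in(0,\underline{\beta}_A(L_2))$ and apply the first implication in each half of Lemma~\ref{lem:bohlproperties}(iii) to obtain $K_1,K_2>0$ with $\|x(n,x_0)\|\le K_1e^{\gamma_1(n-m)}\|x(m,x_0)\|$ for $x_0\in L_1$, $n>m$, and $\|x(n,x_0)\|\ge K_2e^{\gamma_2(n-m)}\|x(m,x_0)\|$ for $x_0\in L_2$, $n>m$ (a subspace equal to $\{0\}$ being handled trivially). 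With $\alpha\coloneqq\min\{-\gamma_1,\gamma_2\}>0$ and $K\coloneqq\max\{1,K_1,K_2^{-1}\}$, and using $\gamma_1\le-\alpha$, $\gamma_2\ge\alpha$ and $n-m\ge0$, these bounds imply \eqref{Dich1} and \eqref{Dich2} (the case $n=m$ being covered since $K\ge1$), so \eqref{1} has an exponential dichotomy on $L_1\oplus L_2$.

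I do not anticipate a real obstacle: the substance is already encapsulated in Lemma~\ref{lem:bohlproperties}(iii). The only points that deserve care are the degenerate cases where $L_1$ or $L_2$ equals $\{0\}$ — handled by the conventions for Bohl exponents of $\{0\}$ and the vacuity of the corresponding dichotomy estimate — and the bookkeeping needed to produce a single pair $(K,\alpha)$ that serves both halves of the dichotomy as well as the diagonal $n=m$.
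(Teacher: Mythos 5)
Your proof is correct and takes the same route the paper intends: the paper merely states that the argument is analogous to that of Lemma~\ref{lem:BD-characterization}, and that argument also reduces both directions to Lemma~\ref{lem:bohlproperties}(iii), which is exactly your key tool (applied here to the full subspaces $L_1,L_2$ so as to obtain the uniform constant $K$). Your handling of the degenerate cases $L_i=\{0\}$ and of the diagonal $n=m$ via $K\ge 1$ is careful and matches what is needed.
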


\begin{proof}
The proof is very similar to that of Lemma \ref{lem:BD-characterization}, so we omit the details.
\end{proof}

The following two theorems show that the Bohl dichotomy spectrum is the closure of the Bohl spectrum, as well as contained in the exponential dichotomy spectrum.

\begin{theorem}[Bohl dichotomy spectrum is closure of Bohl spectrum]\label{thm:cl_B_eq_BD}
It holds that
\begin{equation*}
    \operatorname{cl}\Sigma_B(A)
    =
    \Sigma_{\mathrm{BD}}(A).
\end{equation*}
\end{theorem}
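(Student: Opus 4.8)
The plan is to rewrite both sides as statements about the family of Bohl intervals $\{[\underline{\beta}_A(x_0), \overline{\beta}_A(x_0)] : x_0 \in \mathbb{R}^d \setminus \{0\}\}$ and compare them. The elementary starting point is the scaling identity for the $\gamma$-shifted system: since $x_\gamma(n,x_0) = e^{-\gamma n} x(n,x_0)$, the definitions \eqref{BohlOnL1}, \eqref{BohlOnL2} give, for every $x_0 \in \mathbb{R}^d \setminus \{0\}$,
\begin{equation*}
   \overline{\beta}_{e^{-\gamma}A}(x_0) = \overline{\beta}_A(x_0) - \gamma, \qquad \underline{\beta}_{e^{-\gamma}A}(x_0) = \underline{\beta}_A(x_0) - \gamma .
\end{equation*}
Note also that $e^{-\gamma}A$ is again a Lyapunov sequence, so Lemma \ref{lem:BD-characterization} is applicable to \eqref{gamma-shift}.

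First I would apply Lemma \ref{lem:BD-characterization}(iii) to the $\gamma$-shifted system and substitute the scaling identity: $\gamma \in \varrho_{\mathrm{BD}}(A)$ — i.e.\ \eqref{gamma-shift} has a Bohl dichotomy — if and only if there is $\alpha > 0$ such that for every $x_0 \in \mathbb{R}^d \setminus \{0\}$ one has $\overline{\beta}_A(x_0) \leq \gamma - \alpha$ or $\underline{\beta}_A(x_0) \geq \gamma + \alpha$. Using the elementary fact that a closed interval $[c,d]$ is disjoint from an open interval $(a,b)$ exactly when $d \leq a$ or $c \geq b$, this condition says precisely that the Bohl interval $[\underline{\beta}_A(x_0), \overline{\beta}_A(x_0)]$ is disjoint from $(\gamma-\alpha,\gamma+\alpha)$ for every $x_0 \neq 0$; since $\Sigma_B(A)$ is by definition the union of these intervals, this in turn is equivalent to $(\gamma-\alpha,\gamma+\alpha) \cap \Sigma_B(A) = \emptyset$.

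Putting this together, $\gamma \in \varrho_{\mathrm{BD}}(A)$ if and only if some open interval centred at $\gamma$ misses $\Sigma_B(A)$, which is exactly the statement $\gamma \notin \operatorname{cl}\Sigma_B(A)$. Hence $\varrho_{\mathrm{BD}}(A) = \mathbb{R} \setminus \operatorname{cl}\Sigma_B(A)$ and, taking complements, $\Sigma_{\mathrm{BD}}(A) = \operatorname{cl}\Sigma_B(A)$.

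I do not expect a genuine obstacle; once Lemma \ref{lem:BD-characterization}(iii) is available the argument is bookkeeping. The two points needing a little care are (a) checking the scaling identity and that the Lyapunov-sequence hypothesis is preserved under the $\gamma$-shift, so that Lemma \ref{lem:BD-characterization} genuinely applies to \eqref{gamma-shift}, and (b) correctly matching the quantifier ``$\exists \alpha > 0$'' in the Bohl-dichotomy characterization with the neighbourhood description of the closure, including the trivial but worth-stating passage between ``every individual Bohl interval avoids the ball'' and ``the union $\Sigma_B(A)$ avoids the ball''.
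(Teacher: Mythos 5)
Your proof is correct, and it takes a somewhat different and cleaner route than the paper. The paper proves the two inclusions separately: for $\operatorname{cl}\Sigma_{\mathrm{B}}(A)\subseteq\Sigma_{\mathrm{BD}}(A)$ it first shows $\varrho_{\mathrm{BD}}(A)\subseteq\varrho_{\mathrm{B}}(A)$ by taking the splitting from Lemma \ref{lem:BD-characterization}, decomposing $x_0 = x_1 + x_2$, case-splitting on $x_2 = 0$ versus $x_2 \neq 0$ with Lemma \ref{lem:bohlproperties}(vi), and then invoking the closedness of $\Sigma_{\mathrm{BD}}(A)$ from the Bohl Dichotomy Spectral Theorem \ref{thm:Bohl-Dichotomy-Spectrum}; for $\Sigma_{\mathrm{BD}}(A)\subseteq\operatorname{cl}\Sigma_{\mathrm{B}}(A)$ it runs an argument by contradiction using the distance $\alpha = \inf\{|\gamma-\beta| : \beta\in\Sigma_{\mathrm{B}}(A)\}$, which is essentially your equivalence read in one direction. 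You instead establish the single biconditional $\gamma\in\varrho_{\mathrm{BD}}(A) \Leftrightarrow \gamma\notin\operatorname{cl}\Sigma_{\mathrm{B}}(A)$ by pushing the scaling identity $\overline{\beta}_{e^{-\gamma}A}(x_0)=\overline{\beta}_A(x_0)-\gamma$, $\underline{\beta}_{e^{-\gamma}A}(x_0)=\underline{\beta}_A(x_0)-\gamma$ through Lemma \ref{lem:BD-characterization}(iii) and reading the result as ``some ball $(\gamma-\alpha,\gamma+\alpha)$ misses every Bohl interval, hence misses $\Sigma_{\mathrm{B}}(A)$.'' This buys you two things: you get both inclusions at once, and you never need the closedness of $\Sigma_{\mathrm{BD}}(A)$ from the spectral theorem, so the proof becomes independent of Theorem \ref{thm:Bohl-Dichotomy-Spectrum}. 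The price is essentially nil, since Lemma \ref{lem:BD-characterization}(iii) already packages the case analysis (via Lemma \ref{lem:bohlproperties}(v),(vi)) that the paper redoes inline for its first inclusion. The only small point to flag is that the uniform $\alpha$ in Lemma \ref{lem:BD-characterization}(iii) uses non-strict inequalities $\overline{\beta}_A(x_0)\leq -\alpha$ and $\underline{\beta}_A(x_0)\geq\alpha$, which is exactly what the closed-interval/open-ball disjointness criterion requires, so your bookkeeping in point (b) is indeed consistent.
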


\begin{proof}
\(\operatorname{cl}\Sigma_{\mathrm B}(A)\subseteq\Sigma_{\mathrm{BD}}(A)\):
We show that \(\Sigma_B(A)\subseteq\Sigma_{\mathrm{BD}}(A)\).
To this end, we show that \(\rho_{\mathrm{BD}}(A)\subseteq\rho_{\mathrm B}(A)\).
Let \(\gamma\in\rho_{\mathrm{BD}}(A)\).
Then by Lemma \ref{lem:BD-characterization} there exists a splitting $L_1 \oplus L_2 = \mathbb{R}^d$, such that for \(x_0 = x_1 + x_2 \in\mathbb R^d\setminus\{0\}\) with \(x_1\in L_1\) and \(x_2\in L_2\) it holds that
\begin{equation*}
   \overline{\beta}_{\mathrm{e}^{-\gamma}A}(x_1)
   < 0
   \qquad \text{and} \qquad
   \underline{\beta}_{\mathrm{e}^{-\gamma}A}(x_2)
   > 0.
\end{equation*}
Using Lemma \ref{lem:bohlproperties}(vi) for $x(n+1) = \mathrm{e}^{-\gamma}A(n)x(n)$, it follows that
\begin{align*}
   &&\overline{\beta}_{A}(x_0)
   < \gamma,&&\text{if \(x_2 = 0\)},
   \\
   &&\underline{\beta}_{A}(x_0)
   = \underline{\beta}_{A}(x_1 + x_2)
   \geq \underline{\beta}_{A}(x_2)
   >\gamma,&&\text{if \(x_2 \neq 0\)}.
\end{align*}
Consequently \(\gamma\notin\big[\underline\beta_A(x_0),\overline\beta_A(x_0)\big]\) for all \(x_0\in\mathbb R^d\setminus\{0\}\) and hence \(\gamma\in\rho_{\mathrm{B}}(A)\), proving that \(\Sigma_B(A)\subseteq\Sigma_{\mathrm{BD}}(A)\).
Since \(\Sigma_{\mathrm{BD}}(A)\) is closed by Theorem \ref{thm:Bohl-Dichotomy-Spectrum}, the inclusion \(\operatorname{cl}\Sigma_B(A)\subseteq\Sigma_{\mathrm{BD}}(A)\) follows.

\(\operatorname{cl}\Sigma_\mathrm{B}(A)\supseteq\Sigma_\mathrm{BD}(A)\):
Let \(\gamma\in\Sigma_\mathrm{BD}(A)\).
To show that \(\gamma\in\operatorname{cl}\Sigma_\mathrm{B}(A)\) we equivalently show that
\begin{equation*}
	\alpha\coloneqq\inf\{\vert\gamma-\beta\vert : \beta\in\Sigma_\mathrm{B}(A)\} = 0.
\end{equation*}
Assume to the contrary that \(\alpha>0\).
Then \(\gamma\in\rho_{\mathrm B}(A)\).
We will apply Lemma \ref{lem:BD-characterization}.
To this end let \(x_0\in\mathbb R^d\setminus\{0\}\).
Since \(\big[\underline\beta_A(x_0),\overline\beta_A(x_0)\big]\subseteq\Sigma_\mathrm{B}(A)\), it follows that either
\begin{equation*}
   (i)\quad\gamma<\underline\beta_A(x_0)
   \qquad\text{or}\qquad
   (ii)\quad\gamma>\overline\beta_A(x_0).
\end{equation*}
It follows by definition of \(\alpha\) in case (i), that \(\alpha<\underline\beta_A(x_0)-\gamma = \underline\beta_{\mathrm e^{-\gamma}A}(x_0)\) and in case (ii) that \(\alpha<\gamma-\overline\beta_A(x_0) = -\overline\beta_{\mathrm e^{-\gamma}A}(x_0)\).
By Lemma \ref{lem:BD-characterization} it follows that \(\gamma\in\rho_{\mathrm{BD}(A)}\) which is a contradiction to \(\gamma\in\Sigma_\mathrm{BD}(A)\).
\end{proof}

\begin{theorem}[Exponential dichotomy spectrum contains Bohl dichotomy spectrum]\label{thm:BE_subeq_ED}
It holds that
\begin{equation*}
    \Sigma_{\mathrm{BD}}(A)
    \subseteq
    \Sigma_{\mathrm{ED}}(A).
\end{equation*}
\end{theorem}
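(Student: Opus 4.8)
The plan is to prove the equivalent inclusion of resolvent sets $\varrho_{\mathrm{ED}}(A) \subseteq \varrho_{\mathrm{BD}}(A)$ by observing that an exponential dichotomy is, in particular, a Bohl dichotomy. Concretely, fix $\gamma \in \varrho_{\mathrm{ED}}(A)$. By definition the $\gamma$-shifted system \eqref{gamma-shift} has an exponential dichotomy, so there are subspaces $L_1 \oplus L_2 = \mathbb{R}^d$, constants $\alpha > 0$, $K > 0$ such that \eqref{Dich-gamma1} and \eqref{Dich-gamma2} hold with the \emph{constant} functions $C_1 \equiv K$ on $L_1$ and $C_2 \equiv K^{-1}$ on $L_2$. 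Since constant functions are admissible choices in Definition \ref{bohl}, this exhibits a Bohl dichotomy for \eqref{gamma-shift} on the same splitting, hence $\gamma \in \varrho_{\mathrm{BD}}(A)$. Passing to complements yields $\Sigma_{\mathrm{BD}}(A) \subseteq \Sigma_{\mathrm{ED}}(A)$.

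An alternative route, should one prefer to work with the characterizations already established, is to combine Lemma \ref{lem:ED-characterization} and Lemma \ref{lem:BD-characterization}. If \eqref{gamma-shift} has an exponential dichotomy, Lemma \ref{lem:ED-characterization}(ii) provides a splitting $L_1 \oplus L_2 = \mathbb{R}^d$ with $\overline{\beta}_{\mathrm{e}^{-\gamma}A}(L_1) < 0$ and $\underline{\beta}_{\mathrm{e}^{-\gamma}A}(L_2) > 0$. By the monotonicity property Lemma \ref{lem:bohlproperties}(ii) together with Lemma \ref{lem:bohlproperties}(iv), every nonzero $x_0 \in L_1$ satisfies $\overline{\beta}_{\mathrm{e}^{-\gamma}A}(x_0) = \overline{\beta}_{\mathrm{e}^{-\gamma}A}(\operatorname{span}\{x_0\}) \leq \overline{\beta}_{\mathrm{e}^{-\gamma}A}(L_1) < 0$, so $\sup_{x_0 \in L_1 \setminus \{0\}} \overline{\beta}_{\mathrm{e}^{-\gamma}A}(x_0) < 0$, and symmetrically $\inf_{x_0 \in L_2 \setminus \{0\}} \underline{\beta}_{\mathrm{e}^{-\gamma}A}(x_0) > 0$. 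This is exactly condition (ii) of Lemma \ref{lem:BD-characterization}, so \eqref{gamma-shift} has a Bohl dichotomy and again $\gamma \in \varrho_{\mathrm{BD}}(A)$.

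There is no real obstacle here: the statement is essentially a formal consequence of the fact that Bohl dichotomy relaxes exponential dichotomy by allowing solution-dependent constants, so any system with an exponential dichotomy trivially has a Bohl dichotomy. The only point requiring a modicum of care is the bookkeeping in the second approach, namely that the passage from the subspace-Bohl-exponent inequalities $\overline{\beta}_{\mathrm{e}^{-\gamma}A}(L_1)<0$, $\underline{\beta}_{\mathrm{e}^{-\gamma}A}(L_2)>0$ to the fibrewise supremum/infimum conditions is legitimate, which is handled by the monotonicity in Lemma \ref{lem:bohlproperties}(ii). I would present the first, shorter argument in the paper, and perhaps remark that the chain of inclusions $\Sigma_{\mathrm{B}}(A) \subseteq \operatorname{cl}\Sigma_{\mathrm{B}}(A) = \Sigma_{\mathrm{BD}}(A) \subseteq \Sigma_{\mathrm{ED}}(A)$ now follows by combining this theorem with Theorem \ref{thm:cl_B_eq_BD}.
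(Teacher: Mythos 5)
Your first argument is precisely the paper's proof: $\varrho_{\mathrm{ED}}(A) \subseteq \varrho_{\mathrm{BD}}(A)$ because an exponential dichotomy is a Bohl dichotomy with the constant functions $C_1 \equiv K$, $C_2 \equiv K^{-1}$. Correct, and the same approach.
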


\begin{proof}
From the definition of both spectra, it easily follows that \(\rho_{\mathrm{ED}}(A)\subseteq\rho_{\mathrm{BD}}(A)\).
\end{proof}

We introduce a notion of transformation between difference equations of the form \eqref{1} and show that the spectra are preserved under transformations.

\begin{definition}[Dynamic equivalence]\label{def:dynamic-equivalence}
Let $A, B \in \mathcal{L}^{\mathrm{Lya}}(\mathbb{N},\mathbb{R}^{d\times d})$. The two systems
\begin{equation}\label{two-systems}
      x(n+1) = A(n) x(n)
      \quad \text{and} \quad
      y(n+1) = B(n) y(n),
      \qquad
      n \in \mathbb{N},
\end{equation}
are called \emph{dynamically equivalent}, if there exists $T \in \mathcal{L}^{\mathrm{Lya}}(\mathbb{N},\mathbb{R}^{d\times d})$ with
\begin{equation*}
   B(n)
   =
   T(n+1)^{-1} A(n) T(n),
   \quad
   n \in \mathbb{N}.
\end{equation*}
$T$ is called \emph{Lyapunov transformation} between the two systems \eqref{two-systems}. The two systems \eqref{two-systems} are said to be \emph{dynamically equivalent via $T$}.
\end{definition}

\begin{remark}[Relation of solutions of dynamically equivalent systems]\label{rem:dynamic-equivalence}
Using the fact that
\begin{equation*}
   T(n) \Phi_B(n,m)
   =
   \Phi_A(n,m) T(m)
   , \qquad n, m \in \mathbb{N},
\end{equation*}
it follows for $x_0, y_0 \in \mathbb{R}^d$ that
\begin{equation*}
      x(n, x_0)
      =
      T(n) y(n, T(0)^{-1} x_0)
      \quad \text{and} \quad
      y(n, y_0)
      =
      T(n)^{-1} x(n, T(0) y_0),
      \quad
      n \in \mathbb{N}.
\end{equation*}
\end{remark}

\begin{lemma}[Invariance of Bohl exponents under dynamic equivalence]\label{lem:bohl-invariance}
Let $L \subseteq \mathbb{R}^d$. If the two system \eqref{two-systems} are dynamically equivalent via $T$, then
\begin{equation*}
   \overline{\beta}_A(L) = \overline{\beta}_B(T(0)^{-1} L)
   \qquad \text{and} \qquad
   \underline{\beta}_A(L) = \underline{\beta}_B(T(0)^{-1} L).
\end{equation*}
\end{lemma}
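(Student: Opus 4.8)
The plan is to reduce everything to the relation between solutions of dynamically equivalent systems recorded in Remark~\ref{rem:dynamic-equivalence}, combined with the boundedness of $T$ and $T^{-1}$, which holds because $T \in \mathcal{L}^{\mathrm{Lya}}(\mathbb{N},\mathbb{R}^{d\times d})$. First dispose of the trivial case $L = \{0\}$: since $T(0)$ is invertible we have $T(0)^{-1}\{0\} = \{0\}$, and both sides of each claimed identity equal $-\infty$ (resp.\ $+\infty$) by definition. So from now on assume $L \neq \{0\}$, and observe that the linear bijection $v \mapsto T(0)^{-1} v$ maps $L \setminus \{0\}$ onto $(T(0)^{-1}L) \setminus \{0\}$.

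Next I would fix $x_0 \in L \setminus \{0\}$ and set $y_0 \coloneqq T(0)^{-1} x_0 \in (T(0)^{-1}L) \setminus \{0\}$. Using $x(n,x_0) = T(n)\, y(n,y_0)$ and $y(m,y_0) = T(m)^{-1} x(m,x_0)$ from Remark~\ref{rem:dynamic-equivalence}, the estimates $\|T(n)v\| \le \|T\|_\infty \|v\|$ and $\|x(m,x_0)\| \ge \|T^{-1}\|_\infty^{-1}\|y(m,y_0)\|$ give, for all $n > m$,
\[
   \frac{1}{n-m}\ln\frac{\|x(n,x_0)\|}{\|x(m,x_0)\|}
   \le
   \frac{\ln\!\big(\|T\|_\infty \|T^{-1}\|_\infty\big)}{n-m}
   +
   \frac{1}{n-m}\ln\frac{\|y(n,y_0)\|}{\|y(m,y_0)\|}.
\]
Taking the supremum over $x_0 \in L \setminus \{0\}$ — which, via the bijection above, is the same as the supremum over $y_0 \in (T(0)^{-1}L)\setminus\{0\}$ in the last term — then the supremum over $n - m > N$, and finally letting $N \to \infty$, the constant term vanishes and one obtains $\overline{\beta}_A(L) \le \overline{\beta}_B(T(0)^{-1}L)$. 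The symmetric lower estimate, with $-\ln(\|T\|_\infty\|T^{-1}\|_\infty)$ in place of the constant and infima instead of suprema, yields $\underline{\beta}_A(L) \ge \underline{\beta}_B(T(0)^{-1}L)$.

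For the reverse inequalities I would use symmetry: the two systems in \eqref{two-systems} are also dynamically equivalent via the Lyapunov transformation $S \coloneqq (T(n)^{-1})_{n\in\mathbb{N}}$, since $A(n) = S(n+1)^{-1} B(n) S(n)$. Applying the bound just proved to $B$, $A$, $S$, and the subspace $T(0)^{-1}L$, and noting $S(0)^{-1} = T(0)$, gives $\overline{\beta}_B(T(0)^{-1}L) \le \overline{\beta}_A(S(0)^{-1}T(0)^{-1}L) = \overline{\beta}_A(L)$, and likewise for $\underline{\beta}$. Combining the two directions proves the equalities. (Alternatively, one can run the very same estimate backwards directly from $y(n,y_0) = T(n)^{-1} x(n, T(0)y_0)$.) There is no genuine obstacle in this proof; the only thing needing a little care is the bookkeeping of the bijection $L \setminus \{0\} \leftrightarrow (T(0)^{-1}L)\setminus\{0\}$ so that the suprema/infima over $x_0$ and over $y_0$ genuinely match, together with keeping straight which of $\|T\|_\infty$ and $\|T^{-1}\|_\infty$ enters each estimate.
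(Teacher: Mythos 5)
Your proof is correct and follows essentially the same route as the paper's: exploit the solution relations from Remark~\ref{rem:dynamic-equivalence} together with the boundedness of $T$ and $T^{-1}$ to bound the logarithmic growth quotient of one system by that of the other plus a vanishing $O(1/(n-m))$ term, take suprema and let $N\to\infty$, and then obtain the reverse inequality by the symmetry $A(n)=T(n+1)B(n)T(n)^{-1}$. The only cosmetic difference is that you derive $\overline{\beta}_A(L)\le\overline{\beta}_B(T(0)^{-1}L)$ first whereas the paper starts from the other direction, and you explicitly handle $L=\{0\}$ and spell out the bijection $L\setminus\{0\}\leftrightarrow (T(0)^{-1}L)\setminus\{0\}$, both of which the paper leaves implicit.
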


\begin{proof}
By Remark \ref{rem:dynamic-equivalence},
\begin{equation*}
    \Phi_B(n,0)
    =
    T(n)^{-1}\Phi_A(n,0)T(0)
    , \qquad
    n \in \mathbb{N}.
\end{equation*}
For \(y_0\in T(0)^{-1}L\setminus\{0\}\), and \(m,n\in\N\) with \(n>m\) we compute
\begin{align*}
    \frac{1}{n-m}\ln\frac{\Vert\Phi_B(n,0)y_0\Vert}{\Vert\Phi_B(m,0)y_0\Vert}
    &=
    \frac{1}{n-m}\ln\frac{\Vert T(n)^{-1}\Phi_A(n,0)T(0)y_0\Vert}{\Vert T(m)^{-1}\Phi_A(m,0)T(0)y_0\Vert}
    \\
    &\leq
    \frac{1}{n-m}\ln\frac{\Vert T(n)^{-1}\Vert\cdot\Vert T(m)\Vert\cdot\Vert \Phi_A(n,0)T(0)y_0\Vert}{\Vert \Phi_A(m,0)T(0)y_0\Vert}
    \\
    &\leq
    \frac{\ln\big(\Vert T^{-1}\Vert_\infty\cdot\Vert T\Vert_\infty\big)}{n-m}+\frac{1}{n-m}\ln\frac{\Vert \Phi_A(n,0)T(0)y_0\Vert}{\Vert \Phi_A(m,0)T(0)y_0\Vert}.
\end{align*}
Hence for \(N\in\N\), it holds that
\begin{align*}
    &\sup_{n-m>N}\sup_{y_0\in T(0)^{-1}L\setminus\{0\}}\frac{1}{n-m}\ln\frac{\Vert\Phi_B(n,0)y_0\Vert}{\Vert\Phi_B(m,0)y_0\Vert}
    \\
    &\qquad\leq
    \sup_{n-m>N}\Bigg(\frac{\ln\big(\Vert T^{-1}\Vert_\infty\cdot\Vert T\Vert_\infty\big)}{n-m}
    +
    \sup_{x_0\in L\setminus\{0\}}\frac{1}{n-m}\ln\frac{\Vert \Phi_A(n,0)x_0\Vert}{\Vert \Phi_A(m,0)x_0\Vert}\Bigg)
    \\
    &\qquad\leq
    \frac{\ln\big(\Vert T^{-1}\Vert_\infty\cdot\Vert T\Vert_\infty\big)}{N+1}
    +\sup_{n-m>N} \sup_{x_0\in L\setminus\{0\}}\frac{1}{n-m}\ln\frac{\Vert \Phi_A(n,0)x_0\Vert}{\Vert \Phi_A(m,0)x_0\Vert}.
\end{align*}
Letting \(N\) tend to infinity, it follows that
\begin{equation*}
   \overline\beta_B(T(0)^{-1}L)
   \leq
   \overline\beta_A(L).
\end{equation*}
Since $A(n) = T(n+1) B(n) T(n)^{-1}$, $n \in \mathbb{N}$, it also follows that
\begin{equation*}
   \overline\beta_A(L)
   \leq
   \overline\beta_B(T(0)^{-1}L),
\end{equation*}
proving that $\overline\beta_A(L) = \overline\beta_B(T(0)^{-1}L)$. The equality $\underline{\beta}_A(L) = \underline{\beta}_B(T(0)^{-1} L)$ follows similarly.
\end{proof}

\begin{theorem}[Invariance of spectra under dynamic equivalence]\label{thm:spectra-invariance}
If the two systems \eqref{two-systems} are dynamically equivalent then
\begin{equation*}
   \Sigma_{\mathrm{B}}(A) = \Sigma_{\mathrm{B}}(B)
   , \qquad
   \Sigma_{\mathrm{BD}}(A) = \Sigma_{\mathrm{BD}}(B)
   , \qquad
   \Sigma_{\mathrm{ED}}(A) = \Sigma_{\mathrm{ED}}(B).
\end{equation*}
\end{theorem}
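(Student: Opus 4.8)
The plan is to reduce all three equalities to the invariance of Bohl exponents under dynamic equivalence (Lemma \ref{lem:bohl-invariance}), combined with the characterizations of exponential and Bohl dichotomy in terms of Bohl exponents (Lemmas \ref{lem:ED-characterization} and \ref{lem:BD-characterization}). Two elementary observations will be used throughout: first, since \(e^{-\gamma}B(n) = T(n+1)^{-1}\big(e^{-\gamma}A(n)\big)T(n)\) for all \(n \in \mathbb{N}\), the \(\gamma\)-shifted systems \eqref{gamma-shift} associated with \(A\) and with \(B\) are again dynamically equivalent via the \emph{same} Lyapunov transformation \(T\), for every \(\gamma \in \mathbb{R}\); second, \(y_0 \mapsto T(0)^{-1}y_0\) is a linear isomorphism of \(\mathbb{R}^d\), so it restricts to a bijection of \(\mathbb{R}^d \setminus \{0\}\) and maps any direct-sum decomposition \(L_1 \oplus L_2 = \mathbb{R}^d\) to \(T(0)^{-1}L_1 \oplus T(0)^{-1}L_2 = \mathbb{R}^d\).

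For \(\Sigma_{\mathrm B}\), I would first note that for \(x_0 \in \mathbb{R}^d \setminus \{0\}\), Lemma \ref{lem:bohlproperties}(iv) gives \([\underline\beta_A(x_0),\overline\beta_A(x_0)] = [\underline\beta_A(L),\overline\beta_A(L)]\) for \(L \coloneqq \operatorname{span}\{x_0\}\), and then Lemma \ref{lem:bohl-invariance} applied to this one-dimensional \(L\) yields \([\underline\beta_A(x_0),\overline\beta_A(x_0)] = [\underline\beta_B(T(0)^{-1}x_0),\overline\beta_B(T(0)^{-1}x_0)]\), since \(T(0)^{-1}L = \operatorname{span}\{T(0)^{-1}x_0\}\) with \(T(0)^{-1}x_0 \neq 0\). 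Taking the union over \(x_0 \in \mathbb{R}^d \setminus \{0\}\) and using that \(x_0 \mapsto T(0)^{-1}x_0\) is a bijection of \(\mathbb{R}^d \setminus \{0\}\) gives \(\Sigma_{\mathrm B}(A) = \Sigma_{\mathrm B}(B)\) straight from Definition \ref{def:Bohl-spectrum}.

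For \(\Sigma_{\mathrm{ED}}\), I would fix \(\gamma \in \mathbb{R}\) and apply Lemma \ref{lem:ED-characterization}: \eqref{gamma-shift} for \(A\) has an exponential dichotomy iff there is a splitting \(L_1 \oplus L_2 = \mathbb{R}^d\) with \(\overline\beta_{e^{-\gamma}A}(L_1) < 0\) and \(\underline\beta_{e^{-\gamma}A}(L_2) > 0\). By Lemma \ref{lem:bohl-invariance} for the (dynamically equivalent) shifted systems this is equivalent to \(\overline\beta_{e^{-\gamma}B}(T(0)^{-1}L_1) < 0\) and \(\underline\beta_{e^{-\gamma}B}(T(0)^{-1}L_2) > 0\) for the splitting \(T(0)^{-1}L_1 \oplus T(0)^{-1}L_2 = \mathbb{R}^d\); and conversely every splitting of \(\mathbb{R}^d\) arises this way. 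Hence \eqref{gamma-shift} for \(A\) has an exponential dichotomy iff it does for \(B\), so \(\varrho_{\mathrm{ED}}(A) = \varrho_{\mathrm{ED}}(B)\), i.e.\ \(\Sigma_{\mathrm{ED}}(A) = \Sigma_{\mathrm{ED}}(B)\). The argument for \(\Sigma_{\mathrm{BD}}\) will be identical in structure, now invoking Lemma \ref{lem:BD-characterization}(ii); the only additional point is that \(\sup_{x_0 \in L_i \setminus\{0\}} \overline\beta_{e^{-\gamma}A}(x_0) = \sup_{y_0 \in T(0)^{-1}L_i \setminus\{0\}} \overline\beta_{e^{-\gamma}B}(y_0)\), and likewise for \(\inf\,\underline\beta\), which follows by applying the one-dimensional case of Lemma \ref{lem:bohl-invariance} termwise together with the bijectivity of \(T(0)^{-1}\) on \(\mathbb{R}^d \setminus \{0\}\).

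I do not expect a genuine obstacle: the proof is essentially bookkeeping. The points that need a little care are the compatibility of the exponential shift with the transformation (that \(e^{-\gamma}A\) and \(e^{-\gamma}B\) stay dynamically equivalent via the unchanged \(T \in \mathcal{L}^{\mathrm{Lya}}(\mathbb{N},\mathbb{R}^{d\times d})\)) and the fact that \(T(0)^{-1}\) preserves direct sums and maps punctured subspaces bijectively onto punctured subspaces, so the suprema and infima in the Bohl-dichotomy characterization are transported correctly. As an alternative to Lemmas \ref{lem:ED-characterization} and \ref{lem:BD-characterization}, one could also deduce the invariance of \(\Sigma_{\mathrm{ED}}\) and \(\Sigma_{\mathrm{BD}}\) directly from Remark \ref{rem:dynamic-equivalence}: since \(x(n,x_0) = T(n)\,y(n,T(0)^{-1}x_0)\) with \(T, T^{-1}\) bounded, each dichotomy estimate transfers between the two systems with \(K\) replaced by \(\|T\|_\infty \|T^{-1}\|_\infty K\) (respectively a suitably rescaled \(x_0\)-dependent constant) and the subspaces transformed by \(T(0)^{-1}\).
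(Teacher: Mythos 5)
Your proposal is correct and follows essentially the same route as the paper: the Bohl spectrum equality comes directly from Lemma \ref{lem:bohl-invariance} applied to one-dimensional subspaces (via Lemma \ref{lem:bohlproperties}(iv)) together with bijectivity of $T(0)$, and the BD and ED spectrum equalities use the fact that the $\gamma$-shifted systems remain dynamically equivalent via the same $T$ combined with the characterizations in Lemmas \ref{lem:BD-characterization}(ii) and \ref{lem:ED-characterization}. Your added remark about transporting the pointwise suprema $\sup_{x_0\in L_i\setminus\{0\}}\overline\beta_{e^{-\gamma}A}(x_0)$ through $T(0)^{-1}$ by applying the one-dimensional case of Lemma \ref{lem:bohl-invariance} is a detail the paper leaves implicit; spelling it out is a small but welcome improvement in precision.
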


\begin{proof}
Let $T$ be the Lyapunov transformation between the two systems \eqref{two-systems}.

$\Sigma_{\mathrm{B}}(A) = \Sigma_{\mathrm{B}}(B)$. This follows from Lemma \ref{lem:bohl-invariance} and the fact that $T(0)$ is bijective, since
\begin{align*}
   \bigcup_{x_0 \in \mathbb{R}^d \setminus \{0\}}
   \big[ \underline{\beta}_A(x_0), \overline{\beta}_A(x_0) \big]
   &=
   \bigcup_{x_0 \in \mathbb{R}^d \setminus \{0\}}
   \big[ \underline{\beta}_B(T(0)^{-1} x_0), \overline{\beta}_B(T(0)^{-1} x_0) \big]
\\
   &=
   \bigcup_{y_0 \in \mathbb{R}^d \setminus \{0\}}
   \big[ \underline{\beta}_B(y_0), \overline{\beta}_B(y_0) \big].
\end{align*}

$\Sigma_{\mathrm{BD}}(A) = \Sigma_{\mathrm{BD}}(B)$.
We show that $\rho_{\mathrm{BD}}(A) = \rho_{\mathrm{BD}}(B)$, using Lemmas \ref{lem:bohl-invariance} and \ref{lem:BD-characterization}.
To this end let $\gamma \in \rho_{\mathrm{BD}}(A)$. By Lemma \ref{lem:BD-characterization}(ii), there are subspaces $L_1,L_2$ with $L_1 \oplus L_2 = \mathbb R^d$ and
\begin{equation*}
   \sup_{x_0 \in L_1 \setminus \{0\}}
   \overline{\beta}_{\mathrm e^{-\gamma}A}(x_0)
   < 0
   \qquad \text{and} \qquad
   \inf_{x_0 \in L_2 \setminus \{0\}}
   \underline{\beta}_{\mathrm e^{-\gamma}A}(x_0)
   > 0.
\end{equation*}
Since $T(0)$ is bijective and linear, we have $T(0)^{-1}L_1 \oplus T(0)^{-1}L_2 = \mathbb R^d$.
Moreover, the two systems
\begin{equation*}
      x(n+1) = \mathrm e^{-\gamma}A(n) x(n)
      \quad \text{and} \quad
      y(n+1) = \mathrm e^{-\gamma}B(n) y(n),
      \qquad
      n \in \mathbb{N},
\end{equation*}
are dynamically equivalent via $T$, since $\mathrm e^{-\gamma}B(n) = T(n+1)^{-1} \mathrm e^{-\gamma} A(n) T(n)$, $n \in \mathbb{N}$.
By Lemma \ref{lem:bohl-invariance}, we conclude
\begin{align*}
   \sup_{y_0 \in T(0)^{-1}L_1 \setminus \{0\}}
   \overline{\beta}_{\mathrm e^{-\gamma}B}(y_0)
   &=
   \sup_{x_0 \in L_1 \setminus \{0\}}
   \overline{\beta}_{\mathrm e^{-\gamma}A}(x_0)
   < 0,
   \\
   \inf_{y_0 \in T(0)^{-1}L_2 \setminus \{0\}}
   \underline{\beta}_{\mathrm e^{-\gamma}B}(y_0)
   &=
   \inf_{x_0 \in L_2 \setminus \{0\}}
   \underline{\beta}_{\mathrm e^{-\gamma}A}(x_0)
   > 0.
\end{align*}
Hence $\gamma \in \rho_{\mathrm{BD}}(B)$ by Lemma \ref{lem:BD-characterization}(ii), that is $\rho_{\mathrm{BD}}(A) \subseteq \rho_{\mathrm{BD}}(B)$.
Similarly one can show that $\rho_{\mathrm{BD}}(A) \supseteq \rho_{\mathrm{BD}}(B)$.

$\Sigma_{\mathrm{ED}}(A) = \Sigma_{\mathrm{ED}}(B)$.
This follows similarly as $\Sigma_{\mathrm{BD}}(A) = \Sigma_{\mathrm{BD}}(B)$ using Lemma \ref{lem:ED-characterization}.
\end{proof}

We now transform system \eqref{1} into upper triangular form $A = (a_{ij})_{i, j = 1, \dots, d}$, $a_{ij} = 0$ for $i > j$, which by Theorem \ref{thm:spectra-invariance} has the same Bohl, Bohl dichotomy and exponential dichotomy spectra, respectively. We then compare its spectra with the spectra of its diagonal part $x(n+1) = A_{\operatorname{diag}}(n) x(n)$ with $A_{\operatorname{diag}} \coloneqq \operatorname{diag}(a_{11}, \dots, a_{dd})$.

\begin{theorem}[Upper triangular normal form]\label{thm:UTnormal-form}
Let \(A\in\mathcal L^{\mathrm{Lya}}(\mathbb N,\mathbb R^{d\times d})\).
Then there is \(B\in\mathcal L^{\mathrm{Lya}}(\mathbb N,\mathbb R^{d\times d})\), such that \(B(n)\) is upper triangular for \(n\in\mathbb N\) and such that the systems
\begin{equation*}
      x(n+1) = A(n) x(n)
      \quad \text{and} \quad
      y(n+1) = B(n) y(n),
      \qquad
      n \in \mathbb{N},
\end{equation*}
are dynamically equivalent via \(T\in\mathcal L^{\mathrm{Lya}}\), whereby \(T(n)\) is in the special orthogonal group for \(n\in\N\).
\end{theorem}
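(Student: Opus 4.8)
The plan is to realise $T$ through a pointwise $QR$-decomposition (Gram--Schmidt) of the transition matrix, mimicking the classical reduction of a linear system to triangular form. First I would, for every $n\in\mathbb N$, factorise $\Phi_A(n,0)=Q(n)R(n)$ with $Q(n)$ orthogonal and $R(n)$ upper triangular with positive diagonal entries; this decomposition exists and is unique since $\Phi_A(n,0)\in\mathrm{GL}_d(\mathbb R)$ because $A$ is a Lyapunov sequence, and for $n=0$ it yields $Q(0)=R(0)=I_d$. Then I would set $T(n)\coloneqq Q(n)$, which is orthogonal (hence ``orthonormal'' in the terminology of the statement) for all $n$, and define $B(n)\coloneqq T(n+1)^{-1}A(n)T(n)$; once $T,B\in\mathcal{L}^{\mathrm{Lya}}(\mathbb N,\mathbb R^{d\times d})$ is established, the two systems are dynamically equivalent via $T$ by Definition \ref{def:dynamic-equivalence}.

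Next I would check that $B(n)$ is upper triangular. From $\Phi_A(n+1,0)=A(n)\Phi_A(n,0)$ one obtains $A(n)=Q(n+1)R(n+1)R(n)^{-1}Q(n)^{-1}$, whence
\begin{equation*}
   B(n)=T(n+1)^{-1}A(n)T(n)=Q(n+1)^{-1}A(n)Q(n)=R(n+1)R(n)^{-1}.
\end{equation*}
Since the invertible upper triangular matrices form a group, $R(n+1)R(n)^{-1}$, and therefore $B(n)$, is upper triangular for every $n\in\mathbb N$.

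It remains to verify the Lyapunov conditions. The orthogonal group $\mathrm O(d)$ is compact, hence bounded with respect to the fixed matrix norm $\|\cdot\|$, so there is $c\geq 1$ with $\|Q\|\leq c$ and $\|Q^{-1}\|\leq c$ for all $Q\in\mathrm O(d)$; thus $T=(Q(n))_{n\in\mathbb N}$ and $T^{-1}=(Q(n)^{-1})_{n\in\mathbb N}$ are bounded, so $T\in\mathcal{L}^{\mathrm{Lya}}(\mathbb N,\mathbb R^{d\times d})$. Likewise $B(n)=Q(n+1)^{-1}A(n)Q(n)$ and $B(n)^{-1}=Q(n)^{-1}A(n)^{-1}Q(n+1)$ give $\|B(n)\|\leq c^2\|A\|_\infty$ and $\|B(n)^{-1}\|\leq c^2\|A^{-1}\|_\infty$, so $B\in\mathcal{L}^{\mathrm{Lya}}(\mathbb N,\mathbb R^{d\times d})$, which completes the argument.

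I do not expect a genuine obstacle; the only point requiring care is that the matrix norm on $\mathbb R^{d\times d}$ is arbitrary, so one cannot simply invoke $\|Q\|=1$ and must instead use compactness of $\mathrm O(d)$ both to bound $T$ and $T^{-1}$ uniformly and, as a consequence, to transfer boundedness from $A$ and $A^{-1}$ to $B$ and $B^{-1}$ with a constant depending only on $d$ and the chosen norm.
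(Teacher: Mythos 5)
Your proof is correct and complete. The paper does not actually spell out an argument here: it simply cites \cite[Theorem~3.2.1]{Barreira2017} and notes that the transformation in that reference can be taken to be orthogonal. What you have written is, in effect, the standard proof that the cited reference carries out: the pointwise $QR$-decomposition of $\Phi_A(n,0)$, the observation that $B(n)=R(n+1)R(n)^{-1}$ lies in the group of invertible upper triangular matrices, and the uniform bound on $T$, $T^{-1}$, $B$, $B^{-1}$ via compactness of $\mathrm O(d)$ together with the assumed bounds on $A$ and $A^{-1}$. So you are not taking a different route so much as supplying the proof that the paper chose to outsource. One small but worthwhile point you handled correctly: because the paper fixes an arbitrary matrix norm, one cannot assert $\|Q\|=1$ for orthogonal $Q$; the compactness-of-$\mathrm O(d)$ argument (equivalently, equivalence of norms on $\mathbb R^{d\times d}$) is exactly the right way to get a uniform constant $c$ for both $\|Q\|$ and $\|Q^{-1}\|=\|Q^{\!\top}\|$.
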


\begin{proof}
For the proof, see \cite[p.\ 52, Theorem 3.2.1]{Barreira2017}.
\end{proof}

Together with Theorems \ref{thm:cl_B_eq_BD} and \ref{thm:BE_subeq_ED}, the following theorem concludes the discussion of general relations between the Bohl, Bohl dichotomy and exponential dichotomy spectrum.

\begin{theorem}[Spectra of upper triangular systems]\label{thm:SpecUT}
Assume that system \eqref{1} is upper triangular. Then
\begin{center}
   \begin{tabular}{ccccc}
      $\Sigma_{\mathrm{B}}(A)$
      & $\subseteq$
      & $\Sigma_{\mathrm{BD}}(A)$
      & $\subseteq$ &
      $\Sigma_{\mathrm{ED}}(A)$
   \\
       \rotatebox[origin=c]{270}{$\subseteq$}
       & &
       \rotatebox[origin=c]{270}{$\subseteq$}
       & &
       \rotatebox[origin=c]{90}{$=$}
   \\
      $\Sigma_{\mathrm{B}}(A_{\mathrm{diag}})$
      & $=$
      & $\Sigma_{\mathrm{BD}}(A_{\mathrm{diag}})$
      & $=$ &
      $\Sigma_{\mathrm{ED}}(A_{\mathrm{diag}})$
   \end{tabular}
\end{center}
\end{theorem}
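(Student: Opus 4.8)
The plan is to establish the three rows of the table separately and then obtain the vertical relations by composing inclusions; write $A=(a_{ij})_{i,j=1,\dots,d}$ for the given upper triangular system and $A_{\mathrm{diag}}=\operatorname{diag}(a_{11},\dots,a_{dd})$. The top row needs no triangular structure at all: $\Sigma_{\mathrm{B}}(A)\subseteq\operatorname{cl}\Sigma_{\mathrm{B}}(A)=\Sigma_{\mathrm{BD}}(A)$ by Theorem \ref{thm:cl_B_eq_BD}, and $\Sigma_{\mathrm{BD}}(A)\subseteq\Sigma_{\mathrm{ED}}(A)$ by Theorem \ref{thm:BE_subeq_ED}; the same chain applies with $A$ replaced by $A_{\mathrm{diag}}$.

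For the bottom row I would first compute, using Remark \ref{rem:ED_scalar}, that $\Sigma_{\mathrm{ED}}(A_{\mathrm{diag}})=\bigcup_{k=1}^{d}\Sigma_{\mathrm{ED}}(a_{kk})=\bigcup_{k=1}^{d}\big[\underline{\beta}_{a_{kk}}(\mathbb{R}),\overline{\beta}_{a_{kk}}(\mathbb{R})\big]$. Since the solution of the diagonal system through the $k$-th standard basis vector $e_k$ is $x(n,e_k)=\big(\prod_{j=0}^{n-1}a_{kk}(j)\big)e_k$, one has $\|x(n,e_k)\|/\|x(m,e_k)\|=\prod_{j=m}^{n-1}|a_{kk}(j)|$, hence $\underline{\beta}_{A_{\mathrm{diag}}}(e_k)=\underline{\beta}_{a_{kk}}(\mathbb{R})$ and $\overline{\beta}_{A_{\mathrm{diag}}}(e_k)=\overline{\beta}_{a_{kk}}(\mathbb{R})$ by the scalar formula in Remark \ref{rem:ED_scalar}(a). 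Therefore $\Sigma_{\mathrm{B}}(A_{\mathrm{diag}})\supseteq\bigcup_{k=1}^{d}\big[\underline{\beta}_{a_{kk}}(\mathbb{R}),\overline{\beta}_{a_{kk}}(\mathbb{R})\big]=\Sigma_{\mathrm{ED}}(A_{\mathrm{diag}})$, and combining with the top-row chain for $A_{\mathrm{diag}}$ gives $\Sigma_{\mathrm{ED}}(A_{\mathrm{diag}})\subseteq\Sigma_{\mathrm{B}}(A_{\mathrm{diag}})\subseteq\Sigma_{\mathrm{BD}}(A_{\mathrm{diag}})\subseteq\Sigma_{\mathrm{ED}}(A_{\mathrm{diag}})$, so the three spectra of $A_{\mathrm{diag}}$ coincide.

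The remaining ingredient, which I expect to be the main obstacle, is the identity $\Sigma_{\mathrm{ED}}(A)=\Sigma_{\mathrm{ED}}(A_{\mathrm{diag}})$ for upper triangular $A$, i.e.\ the classical statement that the exponential dichotomy spectrum of a (block-)triangular system equals the union of the exponential dichotomy spectra of its diagonal (blocks). Its soft half, $\Sigma_{\mathrm{ED}}(A_{\mathrm{diag}})\subseteq\Sigma_{\mathrm{ED}}(A)$, expresses that an exponential dichotomy of $A$ descends to the scalar equations $a_{jj}$ sitting on the diagonal (the successive quotients of the $A$-invariant flag $\operatorname{span}\{e_1,\dots,e_j\}$); the substantial half, $\rho_{\mathrm{ED}}(A_{\mathrm{diag}})\subseteq\rho_{\mathrm{ED}}(A)$, is a roughness/gluing argument: if every $a_{jj}$ admits an exponential dichotomy at $\gamma$, one constructs one for $e^{-\gamma}A$ by induction on $d$, splitting off the last row and producing a suitably tilted invariant stable subspace by a variation-of-constants estimate that uses the spectral gaps. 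I would either reproduce this argument or cite it (e.g.\ \cite{Poetzsche2012}; see also \cite{Poetzsche2010,Russ2017}); note that the filtration subspaces of $A$ are in general not the coordinate subspaces, so only the spectral intervals, not the decompositions, are inherited from $A_{\mathrm{diag}}$.

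Finally, assembling the top row for $A$, the bottom row, and $\Sigma_{\mathrm{ED}}(A)=\Sigma_{\mathrm{ED}}(A_{\mathrm{diag}})$ yields
\[
   \Sigma_{\mathrm{B}}(A)\subseteq\Sigma_{\mathrm{BD}}(A)\subseteq\Sigma_{\mathrm{ED}}(A)=\Sigma_{\mathrm{ED}}(A_{\mathrm{diag}})=\Sigma_{\mathrm{BD}}(A_{\mathrm{diag}})=\Sigma_{\mathrm{B}}(A_{\mathrm{diag}}),
\]
from which the vertical inclusions $\Sigma_{\mathrm{B}}(A)\subseteq\Sigma_{\mathrm{B}}(A_{\mathrm{diag}})$ and $\Sigma_{\mathrm{BD}}(A)\subseteq\Sigma_{\mathrm{BD}}(A_{\mathrm{diag}})$ together with $\Sigma_{\mathrm{ED}}(A)=\Sigma_{\mathrm{ED}}(A_{\mathrm{diag}})$ follow at once, completing the table.
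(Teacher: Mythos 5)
Your proof is correct and follows essentially the same route as the paper: the top-row inclusions come from Theorems \ref{thm:cl_B_eq_BD} and \ref{thm:BE_subeq_ED}, the identity $\Sigma_{\mathrm{ED}}(A_{\mathrm{diag}})\subseteq\Sigma_{\mathrm{B}}(A_{\mathrm{diag}})$ is obtained via Remark \ref{rem:ED_scalar} by identifying $\Sigma_{\mathrm{ED}}(a_{kk})$ with the Bohl interval of $e_k$, and $\Sigma_{\mathrm{ED}}(A)=\Sigma_{\mathrm{ED}}(A_{\mathrm{diag}})$ is cited from \cite{Poetzsche2012}. The only difference is that you spell out the realization $\overline{\beta}_{A_{\mathrm{diag}}}(e_k)=\overline{\beta}_{a_{kk}}(\mathbb{R})$ explicitly, which the paper leaves implicit.
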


\begin{proof}
That $\Sigma_{\mathrm{B}}(A) \subseteq \Sigma_{\mathrm{BD}}(A) \subseteq \Sigma_{\mathrm{ED}}(A)$ resp.\ $\Sigma_{\mathrm{B}}(A_{\mathrm{diag}}) \subseteq \Sigma_{\mathrm{BD}}(A_{\mathrm{diag}}) \subseteq \Sigma_{\mathrm{ED}}(A_{\mathrm{diag}})$ has been shown in Theorem \ref{thm:cl_B_eq_BD} and Theorem \ref{thm:BE_subeq_ED}.

$\Sigma_{\mathrm{ED}}(A_{\mathrm{diag}})\subseteq\Sigma_{\mathrm{B}}(A_{\mathrm{diag}})$:
By Remark \ref{rem:ED_scalar} it follows that $\Sigma_{\mathrm{ED}}(A_{\mathrm{diag}})$ is the union of the Bohl intervals
\begin{equation*}
    \big[\underline\beta_{A_{\mathrm{diag}}}(e_k),\overline\beta_{A_{\mathrm{diag}}}(e_k)\big],
\end{equation*}
whereby \(e_1,\dots,e_d\) is the standard basis of \(\R^d\).

$\Sigma_{\mathrm{ED}}(A) = \Sigma_{\mathrm{ED}}(A_{\mathrm{diag}})$:
For a proof, see e.g.\ \cite{Poetzsche2012} Corollary 3.25.

$\Sigma_{\mathrm{B}}(A) \subseteq \Sigma_{\mathrm{B}}(A_{\mathrm{diag}})$: This follows from $\Sigma_{\mathrm{B}}(A) \subseteq \Sigma_{\mathrm{BD}}(A) \subseteq \Sigma_{\mathrm{ED}}(A) = \Sigma_{\mathrm{B}}(A_{\mathrm{diag}})$.

$\Sigma_{\mathrm{BD}}(A) \subseteq \Sigma_{\mathrm{BD}}(A_{\mathrm{diag}})$:
Follows from $\Sigma_{\mathrm{BD}}(A) \subseteq \Sigma_{\mathrm{ED}}(A) = \Sigma_{\mathrm{BD}}(A_{\mathrm{diag}})$.
\end{proof}

In the following remark we show that most inclusions in Theorem \ref{thm:SpecUT} might be strict inclusions.

\begin{remark}[Diagonal significance]
The significance of the diagonal entries of an upper triangular matrix function $A$ for the spectrum is an important question when it comes to the computation of the spectrum. In \cite{Czornik2019} an example $A$ is constructed for which
\begin{equation*}
    \sup_{x_0\in\R^2\setminus\{0\}}\overline\beta_A(x_0)<0,
    \quad\text{and}\quad
    \overline{\beta}_A(\mathbb R^2)>0.
\end{equation*}
The following relations result from the above inequalities
\begin{center}
   \begin{tabular}{ccccc}
      $\Sigma_{\mathrm{B}}(A)$
      & $\subseteq$
      & $\Sigma_{\mathrm{BD}}(A)$
      & $\subsetneq$ &
      $\Sigma_{\mathrm{ED}}(A)$
   \\
       \rotatebox[origin=c]{270}{$\subsetneq$}
       & &
       \rotatebox[origin=c]{270}{$\subsetneq$}
       & &
       \rotatebox[origin=c]{90}{$=$}
   \\
      $\Sigma_{\mathrm{B}}(A_{\mathrm{diag}})$
      & $=$
      & $\Sigma_{\mathrm{BD}}(A_{\mathrm{diag}})$
      & $=$ &
      $\Sigma_{\mathrm{ED}}(A_{\mathrm{diag}})$
   \end{tabular}
\end{center}
In fact, since
\begin{equation*}
   \sup_{x_0\in\R^2\setminus\{0\}}\overline\beta_A(x_0)
   =
   \sup \Sigma_{\mathrm{BD}}(A)
   \qquad\text{and}\qquad
   \overline{\beta}_A(\mathbb R^2)
   =
   \sup \Sigma_{\mathrm{ED}}(A) ,
\end{equation*}
it follows that $\Sigma_{\mathrm{BD}}(A) \subsetneq \Sigma_{\mathrm{ED}}(A)$.
In Theorem \ref{thm:SpecUT} we have seen that
\begin{equation*}
    \Sigma_{\mathrm{ED}}(A) = \Sigma_{\mathrm{B}}(A_{\mathrm{diag}}) = \Sigma_{\mathrm{BD}}(A_{\mathrm{diag}}) = \Sigma_{\mathrm{ED}}(A_{\mathrm{diag}}).
\end{equation*}
Using $\Sigma_{\mathrm{B}}(A) \subseteq \Sigma_{\mathrm{BD}}(A)$, we conclude that $\Sigma_{\mathrm{B}}(A) \subsetneq \Sigma_{\mathrm{B}}(A_{\mathrm{diag}})$ and $\Sigma_{\mathrm{BD}}(A) \subsetneq \Sigma_{\mathrm{BD}}(A_{\mathrm{diag}})$.
It is an open question whether there exists a system \eqref{1} such that $\Sigma_{\mathrm{B}}(A) \subsetneq \Sigma_{\mathrm{BD}}(A)$.
\end{remark}

\section*{Declarations}

\subsection*{Ethical Approval}

This work does involve neither human nor animal studies.

\subsection*{Competing interests}

There are no financial or personal interests undermining good scientific conduct and the integrity of the article.

\subsection*{Authors' contributions}

All authors have contributed equally to the article.

\subsection*{Funding}

The research of A.\ Czornik was supported by the Polish National Agency for Academic Exchange according to the decision PPN/BEK/2020/1/00188/UO/00001.

\subsection*{Availability of data and materials}

For the references used in the article publication place and time are given in the reference section.
No datasets have been used in the article.


\end{document}